\newtheorem{theorem}{Theorem}
\newtheorem{lemma}[theorem]{Lemma}
\newtheorem*{lemma*}{Lemma}
\newtheorem{proposition}[theorem]{Proposition}
\newtheorem{corollary}[theorem]{Corollary}
\title{Critical Mandelbrot Cascades}
\author[J. Barral]{Julien Barral}
\address{LAGA (UMR 7539), D\'epartement de Math\'ematiques, Institut Galil\'ee, Universit\'e Paris 13, 99 avenue Jean-Baptiste
Cl\'ement , 93430 Villetaneuse, France}
\email{barral@math.univ-paris13.fr}
\author[A. Kupiainen]{Antti Kupiainen$^{1,2}$}
\address{University of Helsinki, Department of Mathematics and Statistics,
         P.O. Box 68 , FIN-00014 University of Helsinki, Finland}
\email{antti.kupiainen@helsinki.fi}
\author[M. Nikula]{Miika Nikula$^1$}
\address{University of Helsinki, Department of Mathematics and Statistics,
         P.O. Box 68 , FIN-00014 University of Helsinki, Finland}
\email{miika.nikula@helsinki.fi}
\author[E. Saksman]{Eero Saksman$^1$}
\address{University of Helsinki, Department of Mathematics and Statistics,
         P.O. Box 68 , FIN-00014 University of Helsinki, Finland}
\email{eero.saksman@helsinki.fi}
\author[C. Webb]{Christian Webb$^1$}
\address{University of Helsinki, Department of Mathematics and Statistics,
         P.O. Box 68 , FIN-00014 University of Helsinki, Finland}
\email{christian.webb@helsinki.fi}
\date{\today}
\newcommand{\N}{\mathbb{N}}
\newcommand{\R}{\mathbb{R}}
\newcommand{\E}{\mathbb{E}}
\newcommand{\Prob}{\mathbb{P}}
\newcommand{\ind}{\mathbf{1}}
\newcommand{\Acal}{\mathcal{A}}
\newcommand{\Bcal}{\mathcal{B}}
\renewcommand{\d}{\, \mathrm{d}}
\newcommand{\eqlaw}{\stackrel{d}{=}}
\keywords{Multiplicative cascades, critical temperature, KPZ formula, multifractal analysis}
\subjclass[2010]{Primary 60G57,28A78; Secondary 60G18,83C45,60G51}
\begin{document}
\maketitle

\begin{abstract}
We study Mandelbrot's multiplicative cascade measures at the  critical temperature.
As has been recently shown by 
 Barral, Rhodes and Vargas (\cite{barhova12}), an appropriately normalized sequence of cascade measures converges weakly in probability to a nontrivial limit measure. We prove that these limit measures have no atoms  and give bounds for the modulus of continuity of the cumulative distribution function of the measure. Using the earlier work of Barral and Seuret (\cite{BaSe07}), we compute the multifractal spectrum of the measures. We also extend the result of Benjamini and Schramm (\cite{BS09}), in which the KPZ formula from quantum gravity is validated for the high
temperature cascade measures, to the critical and low temperature cases.
\end{abstract}

\section{Introduction }

Random multiplicative cascade measures were introduced by B. Mandelbrot
\cite{mandel2},\cite{mandelbrot},\cite{mandel1}, as simple models exhibiting fractal and statistical features analogous to those observed experimentally in velocity fluctuations of fully developed turbulence. Since then
these multifractal measures have found applications in various fields ranging from
mathematical finance to disordered systems and two dimensional quantum
gravity (see \cite{bajinrhovar12} for references). In the field of disordered systems the (normalized) cascade measures
can be seen as  Gibbs measures of Generalized Random Energy Models with infinitely many levels or continuous hierarchies (see e.g. \cite{bov,bk}) or alternatively as   Gibbs measures of a model of a directed polymer on a
disordered  tree  \cite{DerSpo}.
 The mathematical study of
multiplicative cascades was initiated by Kahane \cite{Kahane} and Peyri\`ere \cite{pey74}
 and has since then been pursued by numerous people in analysis, probability and
mathematical physics, often independently (see \cite{barman1},\cite{barman2},\cite{barman3},\cite{we11} for references to some of the work).

\footnotetext[1]{Supported by
the  $\hbox{}^1$Academy of Finland and  $\hbox{}^2$ERC}

The simplest cascade measures are random measures on the unit interval
defined in terms of two inputs (see below): a real valued random variable $\xi$
(describing fluctuations at a fixed scale) and (inverse) temperature parameter $\beta>0$.
The behavior of the measures is rather insensitive to $\xi$ but depends strongly
on $\beta$. Derrida and Spohn \cite{DerSpo} argued in 1988
 that they exhibit a phase transition at
a critical value  $\beta_c$ of $\beta$ to a ``glassy" low temperature phase
in $\beta>\beta_c$. In the high temperature region $\beta<\beta_c$ the measures are
continuous (but singular with respect to the Lebesgue measure), already proven by Kahane and
Peyri\`ere  \cite{KP} in 1976. Progress in the critical $\beta=\beta_c$ case and the supercritical
$\beta>\beta_c$ cases has been slower to come. The reason is that whereas in the
subcritical case the cascade measures can be proven to exist as  non-degenerate limits of
positive martingales, in the critical and supercritical cases the martingale limit vanishes, and there is no obvious
candidate for a normalization leading to convergence in law to a non trivial limit. Very
recently,
 A\"{i}d\'{e}kon and Shi \cite{aishi11}  proved detailed asymptotics
for the probability distribution of the total mass of the cascade measures  in the critical case.
In the case where $\xi$ is Gaussian the fifth author  \cite{we11} obtained independently similar results, both in the critical and in the supercritical case, basing his approach  on the seminal paper by Bramson
\cite{br83}. Madaule \cite{ma11}  treated the supercritical case for general $\xi$. These results allow one to find the required renormalizations and to construct the limits
for the total mass (partition function).  Recently, this was extended to the measures themselves
by Barral, Rhodes and Vargas in \cite{barhova12}.
 These latter authors
also proved that the cascade measures are a.s. purely atomic in the supercritical case.

In this paper we study the cascade measures at the critical point. We give a simple proof
that  they have a.s. no
atoms based on a recent result by Buraczewski in \cite{bu09} and the aforementioned
results on the renormalization factors. We also give bounds for the modulus of continuity
of the cumulative distribution function of the critical measure which is of interest
for the attempts to use these measures as inputs for construction of random
plane curves by conformal welding (see \cite{AJKS}, \cite{she} for such constructions
in the high temperature case when the cascade measure is replaced by
exponential of the Gaussian Free Field, related to a continuous cascade model). In passing we note that our approach can also be used to  improve the known bounds for the modulus of continuity of the Mandelbrot measures in the subcritical case.
Next, we discuss the KPZ formula \cite{KPZ} of two dimensional quantum gravity in
the cascade context. The KPZ formula was 
reformulated  by Duplantier and Sheffield \cite{DS08} as a relation between Hausdorff dimensions of sets computed with the
Lebesgue measure and a random measure given by exponential of the Gaussian Free Field
and proven by them to be valid in the high temperature region. In the cascade context
the high temperature result was proved by  Benjamini and Schramm (\cite{BS09})  and
we show how their proof 
 generalizes to the critical and
low temperature cases. Finally, using the earlier work of Barral and Seuret (\cite{BaSe07}), we compute the multifractal spectrum of the measures in the critical and
low temperature cases.

It remains a challenge to extend the results of this paper and  \cite{barhova12} to
the stationary log-normal multiplicative chaos of Mandelbrot \cite{mandel1}
and the related
measures given as exponentials of the Gaussian Free Field (GFF) \cite{DS08}.
Some progress to this goal has been  obtained  very recently \cite{drsv},\cite{drsv2},\cite{bknsw}. Especially, in \cite{drsv} it is shown that in the critical case for GFF there are no atoms, but their methods do not give as fine control of the continuity of the measures as ours. Moreover, \cite{bknsw} establishes partial counterparts of certain  results of the present paper for critical  GFF.

\section{Definitions and Results }\label{se:defres}

For simplicity,  in this paper we will consider  only multiplicative cascade measures on binary trees. We define the symbolic space as $\Sigma = \bigcup_{n=1}^\infty \{0,1\}^n$ and for convenience denote the $n$-th level by $\Sigma_n = \{0,1\}^n$ i.e. this set indexes the edges  of the tree on  $n$-th level. Let $\xi$ be a random variable such that
\begin{equation}\label{eq:criticalnormalization}
\E e^\xi = \frac{1}{2} \quad {\rm and} \quad \E \, \xi e^\xi = 0
\end{equation}
and
\begin{equation}\label{eq:momentconditions}
\E e^{(1+h)\xi} < \infty\quad  \textrm{for some } h > 0.
\end{equation}
The conditions \eqref{eq:criticalnormalization} are essentially a normalization that is convenient for studying the critical case ($\beta_c=1$ with this normalization)  and can be changed by considering instead $a \xi + b$ for $a,b \in \R$. 
E.g. in the Gaussian case $\xi\sim N(-2\log 2,2\log 2)$ satisfies \eqref{eq:criticalnormalization}.

The condition \eqref{eq:momentconditions} on the tail behavior of $\xi$ is a technical assumption that is required for the proofs of many of the results we are building upon. It is obviously satisfied in the Gaussian case.  Many of our results remain true on less stringent assumptions, and in some cases  we indicate this explicitly. 

To define the cascade measures, let $\{\xi_\sigma\}_{\sigma \in \Sigma}$ be an independent family of copies of $\xi$ and associate to every $\sigma = \sigma_1 \sigma_2 \dots \sigma_n \in \Sigma$ the sum
$$
X_\sigma = \xi_{\sigma_1} + \xi_{\sigma_1 \sigma_2} + \cdots + \xi_{\sigma_1 \sigma_2
\dots \sigma_n}.
$$
For any $\beta > 0$, consider the partition function
\begin{equation}
\label{eq:defZbeta}
Z_{\beta,n} = \sum_{\sigma \in \Sigma_n} e^{\beta X_\sigma} \quad \textrm{for } n = 1, 2, \dots.
\end{equation}
In other words, we consider a basic model of  the branching random walk
with $X_\sigma$ the positions of the $2^n$ particles at time $n$.
Interpreting $\sigma \in \Sigma_n$ as a spin configuration on $\{1,\dots,n\}$ we
recognize that $Z_{\beta,n}$ is the partition function of
a Generalized Random Energy Model with continuous hierarchies, see the end of this Section
for further discussion. 
Finally, one may also view $Z_{\beta,n}$ as  the partition function of a model for a polymer on a tree
\cite{DerSpo}.

It is a classical result of Kahane and Peyri\`{e}re \cite{KP} that for $\beta < 1$ (the \emph{subcritical}
or \emph{high temperature}  case) we have
\begin{equation}\label{eq:kahanepeyrier}
(\E Z_{\beta,n})^{-1} Z_{\beta,n} \stackrel{n \to \infty}{\longrightarrow} Y_\beta \quad \textrm{almost surely},
\end{equation}
where the limit variable $Y_\beta$ is almost surely positive. It has recently been shown by A\"{i}d\'{e}kon and Shi (\cite{aishi11}) that for $\beta = 1$ (the \emph{critical case}),
\begin{equation}\label{eq:aidekonshi}
n^\frac{1}{2} Z_{1,n} \stackrel{n \to \infty}{\longrightarrow} Y_1 \quad \textrm{in probability},
\end{equation}
where $Y_1$ is an almost surely positive random variable of infinite mean. Another recent result, due to Madaule (\cite{ma11}), shows that for $\beta > 1$ (the \emph{supercritical } or \emph{low temperature} case) we have
\begin{equation}\label{eq:madaule}
n^\frac{3\beta}{2} Z_{\beta,n} \stackrel{n \to \infty}{\longrightarrow} Y_\beta \quad \textrm{in distribution}
\end{equation}
for a positive random variable $Y_\beta$. In the case of a Gaussian $\xi$ similar results on critical and supercritical cases were  obtained independently by the fifth author (\cite{we11}) who proved convergence in distribution. It is known that convergence in \eqref{eq:aidekonshi} (resp. \eqref{eq:madaule}) cannot improved to almost sure convergence (resp. convergence in probability).

In accordance with these deterministic normalizations for $Z_{\beta,n}$ we study the measures $\mu_{\beta,n}$ on $[0,1]$ defined by
\begin{eqnarray}\label{eq:n-levelmeasures}
\mu_{\beta,n}(I_\sigma) &=&  \left( \E Z_{\beta,n} \right)^{-1} e^{\beta X_\sigma} \quad \textrm{for }\; \beta <1,\nonumber\\
\mu_{1,n}(I_\sigma) & =& n^\frac{1}{2} e^{X_\sigma}, \quad \textrm{and}\nonumber\\ \quad \mu_{\beta,n}(I_\sigma) &=&  n^\frac{3\beta}{2} e^{\beta X_\sigma}  \quad \textrm{for }\; \beta >1,\nonumber
\end{eqnarray}
where $I_\sigma$ is the dyadic interval naturally coded by $\sigma  \in \Sigma_n$, and the density of $\mu_{\beta,n}$ with respect to the Lebesgue measure is constant on each of these level $n$ intervals. As said in the introduction, the corresponding limit measures $\mu_\beta$ in the subcritical case have been much studied and well understood, and it holds that
\begin{equation}\label{eq:subcritical}
\mu_{\beta,n} \stackrel{w}{\underset{n \to \infty}{\longrightarrow}}\mu_\beta \quad \textrm{almost surely} \quad {\rm for } \;\; \beta<1,
\end{equation}
where the law of the limit measure satisfies
\begin{equation}\label{eq:subcscaling}
\left( \mu_\beta(I_\sigma) \right)_{\sigma \in \Sigma_n} \eqlaw \left( (\E Z_{\beta,n})^{-1} e^{\beta X_\sigma} Y_\beta^{(\sigma)} \right)_{\sigma \in \Sigma_n} \quad \textrm{for all}\;\, n\geq 1,
\end{equation}
where $\{Y_\beta^{(\sigma)}\}_{\sigma \in \Sigma_n}$ is an independent collection of copies of $Y_\beta$ that is also independent of $\{X_\sigma\}_{\sigma \in \Sigma_n}$. If for $s\in \mathbb{R}$ we set 
\begin{equation}\label{phi}
\phi (s)=-\log_2\E (e^{s\xi}) \quad \text{and} \quad \widetilde \phi(s)=1-\phi(s)
\end{equation}
we see that $\E Z_{n,\beta} = 2^{n \widetilde \phi(\beta)}$.

 In the critical and supercritical cases
it was observed by Barral, Rhodes and Vargas in \cite{barhova12} that the deterministic normalizations for the partition functions 
imply the weak convergence of the measures $\mu_{\beta,n}$ to nontrivial limit measures:
\begin{eqnarray}
&&\mu_{1,n} \stackrel{w}{\underset{n \to \infty}{\longrightarrow}} \mu_1 \quad \textrm{in probability} \quad {\rm and}\label{eq:critical} \\
&&\mu_{\beta,n} \stackrel{w}{\underset{n \to \infty}{\longrightarrow}} \mu_\beta \quad \textrm{in distribution} \quad\text{for} \;\; \beta >1,\label{eq:supercritical}
\end{eqnarray}
where the laws of the measures $\mu_\beta$ for $\beta\geq 1$ can be described by
\begin{equation}\label{eq:semi-stable}
\left( \mu_\beta(I_\sigma) \right)_{\sigma \in \Sigma_n} \eqlaw \left( e^{\beta X_\sigma} Y_\beta^{(\sigma)} \right)_{\sigma \in \Sigma_n},\quad \textrm{for all}\;\, n\geq 1,
\end{equation}
where $\{Y_\beta^{(\sigma)}\}_{\sigma \in \Sigma_n}$ is an independent collection of copies of $Y_\beta$ that is also independent of $\{X_\sigma\}_{\sigma \in \Sigma_n}$. 

Barral, Rhodes and Vargas noted furthermore that the law of the supercritical limit measures $\mu_\beta$, $\beta > 1$, may also be described in terms of the law of the critical case $\mu_1$ as follows. For $\alpha \in (0,1)$, let $(L_\alpha(s))_{s \geq 0}$ be a stable L\'evy subordinator
of index $\alpha$, that is, a process with $L_\alpha(0)=0$ that has independent and
stationary increments that are characterized by the Laplace transform 
$$
\E e^{-uL_\alpha(s)}=e^{-su^{\alpha}}.
$$
Then, if $\E e^{(\beta+\epsilon)X}<\infty$ for some $\epsilon>0$, 
\begin{equation}\label{lowmeasure}
\Big(\mu_\beta([0,t])\Big)_{t \in [0,1]} \eqlaw \left(cL_{\frac{1}{\beta}}(\mu_1([0,t]))\right)_{t \in [0,1]}
\end{equation}
where $L_{\frac{1}{\beta}}$  is taken independent of the critical case measure $\mu_1$, and $c>0$ is a constant that depends only on $\beta$. Since the process $(L_\frac{1}{\beta}(s))_{s \geq 0}$ is a pure jump process, this implies that the measures $\mu_\beta$, $\beta > 1$, are almost surely purely atomic. For this reason the
phase transition in the cascade model at $\beta=1$ is called in the physics literature
{\it freezing} transition.

It thus remains to understand the critical measure $\mu_1$ which is the topic of the present paper.
Our first result shows that $\mu_1$ has no atoms:\begin{theorem}\label{th:elementary} \quad For any $\gamma\in [0,1/2)$ we have
\begin{equation}\label{eq:raja1}
n^\gamma \max_{\sigma\in \{ 0,1\}^n}\mu_1(I_\sigma)\stackrel{\Prob}{\longrightarrow} 0 \quad {\rm as} \quad n \to \infty,
\end{equation}
and for any $\gamma \in (1/2,\infty)$ we have
\begin{equation}\label{eq:raja2}
n^\gamma \max_{\sigma\in \{0,1\}^n} \mu_1(I_\sigma) \stackrel{\Prob}{\longrightarrow} \infty \quad {\rm as} \quad n \to \infty.
\end{equation}
\end{theorem}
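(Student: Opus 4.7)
The plan is to combine the scaling identity \eqref{eq:semi-stable}, which at level $n$ reads $(\mu_1(I_\sigma))_{\sigma\in\Sigma_n}\eqlaw (e^{X_\sigma}Y_1^{(\sigma)})_{\sigma\in\Sigma_n}$ with $\{Y_1^{(\sigma)}\}_{\sigma\in\Sigma_n}$ i.i.d.\ copies of $Y_1$ independent of $\{X_\sigma\}_{\sigma\in\Sigma_n}$, together with two external inputs: Buraczewski's two-sided tail estimate from \cite{bu09}, $c/t\leq \Prob(Y_1>t)\leq C/t$ for $t\geq t_0$ (from which one deduces $\Prob(Y_1>s)\geq c''\min(1,1/s)$ for all $s>0$), and the A\"\i{}d\'ekon--Shi convergence \eqref{eq:aidekonshi}, $n^{1/2}Z_{1,n}\to Y_1$ in probability. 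The key observation is that, conditionally on $\{X_\sigma\}$, each $\mu_1(I_\sigma)$ is $e^{X_\sigma}$ times an independent copy of $Y_1$, so probabilities involving $\max_\sigma\mu_1(I_\sigma)$ factor cleanly and can be controlled using only independence and the tail of $Y_1$.

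To prove \eqref{eq:raja1}, fix $\gamma\in[0,1/2)$ and $\epsilon>0$. A conditional union bound combined with the upper tail of $Y_1$ yields
\begin{equation*}
\Prob\!\bigl(\max_\sigma e^{X_\sigma}Y_1^{(\sigma)}>\epsilon n^{-\gamma}\,\big|\,\{X_\sigma\}\bigr)\leq\min\!\bigl(C\epsilon^{-1}n^\gamma Z_{1,n},\,1\bigr).
\end{equation*}
Since $n^\gamma Z_{1,n}=n^{\gamma-1/2}\cdot n^{1/2}Z_{1,n}\to 0$ in probability by \eqref{eq:aidekonshi}, bounded convergence yields $\Prob(\max_\sigma\mu_1(I_\sigma)>\epsilon n^{-\gamma})\to 0$.

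For \eqref{eq:raja2}, fix $\gamma>1/2$ and $R>0$. By conditional independence of the $Y_1^{(\sigma)}$ and $1-x\leq e^{-x}$,
\begin{equation*}
\Prob\!\bigl(\max_\sigma e^{X_\sigma}Y_1^{(\sigma)}\leq Rn^{-\gamma}\,\big|\,\{X_\sigma\}\bigr)\leq\exp\!\Bigl(-\sum_\sigma \Prob\bigl(Y_1>Rn^{-\gamma}e^{-X_\sigma}\bigr)\Bigr).
\end{equation*}
The uniform lower tail, together with the elementary inequality $\sum_\sigma\min(1,u_\sigma)\geq(\sum_\sigma u_\sigma)/(1+\max_\sigma u_\sigma)$ applied to $u_\sigma=e^{X_\sigma}/(Rn^{-\gamma})$, gives
\begin{equation*}
\sum_\sigma \Prob\bigl(Y_1>Rn^{-\gamma}e^{-X_\sigma}\bigr)\geq \frac{c''Z_{1,n}}{Rn^{-\gamma}+e^{\max_\sigma X_\sigma}}.
\end{equation*}
By the Bramson-type estimate that $\max_\sigma X_\sigma+\frac{3}{2}\log n$ is tight for the critical branching random walk, the denominator is of order $n^{-\min(\gamma,3/2)}$ in probability, so the ratio is $\asymp n^{\min(\gamma,3/2)-1/2}(n^{1/2}Z_{1,n})\to+\infty$ in probability by \eqref{eq:aidekonshi} and positivity of $Y_1$. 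Consequently the conditional probability on the left tends to $0$ in probability, and taking expectations proves \eqref{eq:raja2}.

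The principal external input is thus the two-sided tail $\Prob(Y_1>t)\asymp 1/t$; once it and \eqref{eq:aidekonshi} are available as black boxes, both directions reduce to elementary independence manipulations. The only bookkeeping point is controlling the denominator $Rn^{-\gamma}+e^{\max_\sigma X_\sigma}$ uniformly over $\gamma\in(1/2,\infty)$, which is handled cleanly by invoking tightness of the recentred maximum $\max_\sigma X_\sigma+\frac{3}{2}\log n$ of the critical BRW; this is the place where the argument has to accommodate both the regime $\gamma<3/2$ (where the $n^{-\gamma}$ term dominates) and $\gamma\geq 3/2$ (where the extremal contribution does).
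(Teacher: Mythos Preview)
Your proof is correct and follows essentially the same route as the paper: both use the scaling identity \eqref{eq:semi-stable}, Buraczewski's two-sided tail for $Y_1$, the A\"\i d\'ekon--Shi normalization, and (for \eqref{eq:raja2}) the product form together with $1-x\le e^{-x}$ and control of $\max_\sigma e^{X_\sigma}$. Your treatment of \eqref{eq:raja1} via a straight conditional union bound is a little cleaner than the paper's argument (which goes through the product form and $1-x\ge e^{-2x}$ on a good event), and for \eqref{eq:raja2} your inequality $\sum_\sigma\min(1,u_\sigma)\ge(\sum_\sigma u_\sigma)/(1+\max_\sigma u_\sigma)$ plays the same role as the paper's restriction to the event $\mathcal A_n$; note, though, that the full tightness of $\max_\sigma X_\sigma+\tfrac32\log n$ is not needed, since once \eqref{eq:raja2} is established for some $\gamma\in(1/2,1)$ the remaining range follows by monotonicity (as the paper also observes).
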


The proof of this result, given in Section \ref{se:eleproof},  uses only elementary tools in combination with two ingredients: the exact asymptotics of the tail of the random variable $Y_1$ (Theorem \ref{th:tail} below, due to Buraczewski in \cite{bu09}), and the knowledge of the normalization considered above that is included in the very definition of $Y_{1}$ (Theorem \ref{th:deterministic} below).

\begin{corollary}\label{co:noatoms}
Almost surely the limit measure $\mu_{1}$ has  no atoms.
\end{corollary}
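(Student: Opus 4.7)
The plan is to deduce the corollary directly from Theorem \ref{th:elementary} by a short monotonicity argument applied to $M_n := \max_{\sigma\in\Sigma_n}\mu_1(I_\sigma)$. The first step is to observe that $(M_n)_{n\geq 1}$ is almost surely non-increasing: every dyadic interval at level $n+1$ is contained in its parent at level $n$, so $\mu_1(I_\sigma)\leq \mu_1(I_{\sigma'})$ whenever $\sigma\in\Sigma_{n+1}$ extends $\sigma'\in\Sigma_n$, hence $M_{n+1}\leq M_n$.

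Next, I would apply Theorem \ref{th:elementary} with any admissible $\gamma\in[0,1/2)$ (taking $\gamma=0$ is enough) to conclude that $M_n\to 0$ in probability. For a monotone sequence of nonnegative random variables, convergence in probability upgrades to almost sure convergence via the standard subsequence trick: pick $(n_k)$ with $M_{n_k}\to 0$ a.s., then sandwich an arbitrary $M_n$ between $M_{n_{k+1}}$ and $M_{n_k}$ using the monotonicity of the first step. This furnishes a single event $\Omega_0$ of full probability on which $M_n(\omega)\to 0$.

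Finally, on $\Omega_0$, any $x\in[0,1]$ lies in at most two closed dyadic intervals at level $n$, so $\mu_1(\{x\})\leq 2M_n\to 0$, which forces $\mu_1(\{x\})=0$ simultaneously for all $x\in[0,1]$. Thus $\mu_1$ has no atoms almost surely.

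The substantive content is of course already packaged inside Theorem \ref{th:elementary}, which is where all the work happens; the deduction above is pure bookkeeping. The one subtle point worth flagging is that for an uncountable state space one cannot simply assemble the conclusion from the pointwise statements ``$\mu_1(\{x\})=0$ a.s.\ for each fixed $x$'', so some uniformity in $x$ is required, and this is precisely what the upgrade of $M_n\to 0$ from convergence in probability to almost sure convergence (via monotonicity) supplies.
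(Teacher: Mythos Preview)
Your argument is correct and is precisely the elementary deduction the paper has in mind: the corollary is stated immediately after Theorem~\ref{th:elementary} without a separate proof, the intended reasoning being exactly that $M_n=\max_{\sigma\in\Sigma_n}\mu_1(I_\sigma)$ is nonincreasing, hence its convergence to $0$ in probability (the case $\gamma=0$ of \eqref{eq:raja1}) upgrades to almost sure convergence, which kills all atoms simultaneously. Your remark that one needs a single full-probability event valid for every $x$ is the only point worth making explicit, and you handle it correctly.
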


The proof of Theorem \ref{th:elementary}, in combination with suitable   moment estimates (see (\ref{eq2}) below)  in fact gives us a stronger result, an estimate for the modulus of continuity of the cumulative distribution function of $\mu_1$.

\begin{theorem}\label{th:modulus} Assume that $\xi$ satisfies in addition to
\eqref{eq:momentconditions} also $ \E e^{-h\xi}<\infty$ for some $h>0$. Then for any $\gamma\in (0,1/2)$
\begin{equation}\label{eq:modulus}
\mu_1(I)\leq C(\omega)\left(\log \left(1+\frac{1}{|I|}\right)\right)^{-\gamma}
\end{equation}
for all subintervals $I\subset [0,1].$ Here $C(\omega)$ is a random constant, finite almost surely. Moreover, one cannot take $\gamma >1/2$ in the above statement.
\end{theorem}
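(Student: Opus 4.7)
The plan is to first establish an almost-sure bound of the form $\max_{\sigma\in\Sigma_n}\mu_1(I_\sigma) \leq C(\omega)\, n^{-\gamma}$ valid for every $n\geq 1$, and then extend it to arbitrary $I\subset[0,1]$: any $I$ with $|I|\leq 2^{-n}$ meets at most two consecutive dyadic intervals of level $n$, so $\mu_1(I)\leq 2\max_\sigma\mu_1(I_\sigma)$, and taking $n=\lfloor\log_2(1/|I|)\rfloor\asymp\log(1+1/|I|)$ yields \eqref{eq:modulus}. Beyond the ingredients used in Theorem~\ref{th:elementary}, the argument requires a moment estimate of the form
\begin{equation*}
\E[Z_{1,n}^{p}]\leq C_p\, n^{-p/2}\quad\text{for some } p\in(0,1),
\end{equation*}
which is the content of the auxiliary estimate \eqref{eq2} and is precisely where the hypothesis $\E e^{-h\xi}<\infty$ is needed.

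Conditional on the branching random walk $\{X_\sigma\}_{\sigma\in\Sigma_n}$, the identity in law \eqref{eq:semi-stable}, Buraczewski's tail $\Prob(Y_1>s)\leq Cs^{-1}$ (Theorem~\ref{th:tail}), and a union bound capped at $1$ yield
\begin{equation*}
\Prob\Bigl(\max_{\sigma\in\Sigma_n}\mu_1(I_\sigma)>t\Bigm|\{X_\sigma\}\Bigr)\leq \min\!\Bigl(1,\sum_\sigma\Prob(Y_1>te^{-X_\sigma})\Bigr)\leq \min\!\Bigl(1,\tfrac{2CZ_{1,n}}{t}\Bigr),
\end{equation*}
where the last step uses the split of $\Sigma_n$ according to whether $e^{X_\sigma}>t/C$, giving $\sum_\sigma\min(1,Ce^{X_\sigma}/t)\leq 2CZ_{1,n}/t$. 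Since $\min(1,x)\leq x^p$ for $x\geq 0$ and $p\in(0,1)$, taking expectations and invoking the moment bound gives
\begin{equation*}
\Prob\Bigl(\max_{\sigma\in\Sigma_n}\mu_1(I_\sigma)>t\Bigr)\leq (2C/t)^p\,\E[Z_{1,n}^p]\leq C'_p\, t^{-p}\, n^{-p/2}.
\end{equation*}
Choosing $t=n_k^{-\gamma}$ along the subsequence $n_k=2^k$ makes the right-hand side $C'_p\,2^{-kp(1/2-\gamma)}$, which is summable in $k$ for any $\gamma<1/2$, so Borel--Cantelli gives $\max_{\sigma\in\Sigma_{n_k}}\mu_1(I_\sigma)\leq n_k^{-\gamma}$ almost surely for all sufficiently large $k$.

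The additivity $\mu_1(I_\sigma)=\mu_1(I_{\sigma 0})+\mu_1(I_{\sigma 1})$ makes the maximum nonincreasing in $n$, so the subsequence bound propagates (with a multiplicative factor $2^\gamma$) to every $n$, and the dyadic reduction described above completes the proof of \eqref{eq:modulus}. The sharpness claim follows immediately from \eqref{eq:raja2}: an almost-sure bound of the form \eqref{eq:modulus} with $\gamma>1/2$ would force $n^\gamma\max_\sigma\mu_1(I_\sigma)$ to be almost surely bounded, hence tight, contradicting its divergence to $\infty$ in probability. The main obstacle is the moment bound \eqref{eq2}: although $n^{p/2}Z_{1,n}^p\to Y_1^p$ in probability by \eqref{eq:aidekonshi} and $\E Y_1^p<\infty$ for $p<1$ by Buraczewski's tail, turning this into a uniform $L^1$ estimate of the correct order requires ruling out atypical concentrations of $Z_{1,n}$ near zero, and this is precisely where the lower-tail assumption $\E e^{-h\xi}<\infty$ enters.
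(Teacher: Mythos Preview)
Your proof is correct and follows essentially the same strategy as the paper: obtain a probability bound of the form $\Prob(\max_{\sigma\in\Sigma_n}\mu_1(I_\sigma)>n^{-\gamma})\le C\,n^{p(\gamma-1/2)}$ by combining Buraczewski's tail (Theorem~\ref{th:tail}) with the Hu--Shi moment estimate \eqref{eq2}, then apply Borel--Cantelli along a subsequence and use monotonicity of the dyadic maxima to fill in the gaps. The two minor technical differences are both simplifications on your side: you bypass the product formula and the intermediate Laplace-transform estimate by using the union bound together with $\min(1,x)\le x^{p}$ directly, and you take the geometric subsequence $n_k=2^k$ (summable for every $\gamma<1/2$) rather than the paper's polynomial subsequence $n_k=k^{\ell}$ with $\ell$ chosen large.
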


In the subcritical case $\beta\in(0,1)$, it is known that  the measure $\mu_\beta$ has a H\"older modulus of continuity. Indeed, the multifractal formalism (see Section \ref{se:multifractal} and e.g. \cite{Ba00}) tells you via Legendre transform that uniform H\"older continuity holds with any exponent $\gamma <\widetilde \phi(\beta)$ and cannot hold for any  $\gamma>\widetilde \phi(\beta)$. It turns out that our proof of Theorem \ref{th:modulus} can also be applied to considerably sharpen this result, which up to now has been the best modulus of continuity estimate in the subcritical case.

\begin{theorem}\label{th:submodulus}
Assume  only that $\E |\xi|^3 e^\xi < \infty$. 
Let $\beta \in (0,1)$ and $\gamma\in (0,1/2)$. Suppose that there exists $q_\beta>1$ such that $\widetilde\phi(\beta q_\beta)-q_\beta\widetilde\phi(\beta)=0$ (in that case $q_\beta$ is unique and the condition amounts to saying that for $q>0$ one has  $\mathbb {E}Y_\beta^q<\infty$ if and only if $q<q_\beta$). Suppose also that $\mathbb{E} \max(0,\xi) e^{\beta q_\beta\xi}<\infty$. Then for 
all subintervals $I\subset [0,1]$
\begin{equation}\label{eq:submodulus}
\mu_\beta(I)\leq C(\omega)|I|^{\widetilde\phi(\beta)}\left(\log \left(1+\frac{1}{|I|}\right)\right)^{-\gamma\beta}
\end{equation}
where $C(\omega) < \infty$  almost surely. In the Gaussian case,  $\widetilde\phi(s)=(1-\beta)^2$ and $q_\beta=1/\beta^2$. 
\end{theorem}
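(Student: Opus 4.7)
The plan is to transport the argument of Theorem~\ref{th:modulus} to the subcritical regime. Let $M_n := \max_{\sigma \in \Sigma_n} \mu_\beta(I_\sigma)$. By the scaling relation \eqref{eq:subcscaling}, the dyadic masses satisfy
\[
\mu_\beta(I_\sigma) \eqlaw 2^{-n\widetilde\phi(\beta)}\, e^{\beta X_\sigma}\, Y_\beta^{(\sigma)}, \qquad \sigma \in \Sigma_n,
\]
with $\{Y_\beta^{(\sigma)}\}$ iid copies of $Y_\beta$ independent of the branching random walk $(X_\sigma)$. Since any $I \subset [0,1]$ with $|I| \in [2^{-n-1}, 2^{-n})$ is covered by at most three adjacent dyadic intervals at level $n$, the bound \eqref{eq:submodulus} reduces to proving, for any $\gamma<1/2$ and some $\delta>0$, a summable tail estimate
\[
\mathbb{P}\bigl(M_n > c\,2^{-n\widetilde\phi(\beta)} n^{-\gamma\beta}\bigr) \leq C n^{-(1+\delta)},
\]
from which the Borel--Cantelli lemma gives $M_n \leq C(\omega)\,2^{-n\widetilde\phi(\beta)} n^{-\gamma\beta}$ almost surely, eventually.

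The essential input is the sharp Guivarc'h-type tail $\mathbb{P}(Y_\beta > t) \le C t^{-q_\beta}$ at the critical moment, valid under our integrability hypotheses on $\xi$. A direct union bound followed by Markov's inequality at exponent $q>0$ yields
\[
\mathbb{P}\bigl(M_n > c\,2^{-n\widetilde\phi(\beta)} n^{-\gamma\beta}\bigr) \leq C c^{-q}\, n^{q\gamma\beta}\, 2^{n\widetilde\phi(\beta q)}\, \mathbb{E} Y_\beta^{q},
\]
but since $\widetilde\phi(\beta q) \ge 0$ everywhere --- with equality only at $\beta q = 1$ --- this produces $n^\gamma$ at best (taking $q = 1/\beta$), which is not summable, and $q = q_\beta$ is also ruled out because $\mathbb{E} Y_\beta^{q_\beta} = \infty$. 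The refinement, paralleling the proof of Theorem~\ref{th:modulus}, is to work at $q = q_\beta$ after truncating $Y_\beta^{(\sigma)}$ at a judiciously chosen height $T$, using the sharp-tail identity $\mathbb{E}[Y_\beta^{q_\beta}\,\mathbf{1}_{Y_\beta\le T}] = O(\log T)$ in place of the divergent $q_\beta$-th moment, together with the central-limit $\sqrt{n}$-correction for the branching random walk tilted by $e^{\beta q_\beta X_\sigma}$. The auxiliary hypothesis $\mathbb{E}\max(0,\xi)e^{\beta q_\beta \xi}<\infty$ provides exactly the integrability needed to apply the tilted CLT, and the cutoff $\gamma<1/2$ is precisely the exponent emerging from the resulting Gaussian fluctuation.

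The main obstacle is this critical-moment analysis: the logarithmic truncation gain, the $\sqrt{n}$ concentration of the tilted walk, and the $2^n$ union bound must balance precisely to yield the summable estimate above. Once the estimate on $M_n$ is in hand, the remaining steps --- Borel--Cantelli and the trivial covering of arbitrary intervals by dyadic ones --- give \eqref{eq:submodulus} uniformly over all subintervals of $[0,1]$.
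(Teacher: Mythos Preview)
Your overall architecture is right: reduce to dyadic intervals, establish a summable tail bound for $M_n=\max_{\sigma\in\Sigma_n}\mu_\beta(I_\sigma)$, and apply Borel--Cantelli. You also correctly diagnose that the union bound with Markov in expectation fails, since $\E\sum_\sigma e^{\beta q_\beta X_\sigma}=2^{n\widetilde\phi(\beta q_\beta)}$ grows exponentially. But your proposed repair---truncating $Y_\beta$ at a level $T$ and exploiting $\E[Y_\beta^{q_\beta}\mathbf 1_{\{Y_\beta\le T\}}]=O(\log T)$---does not cure this: the logarithmic gain in $T$ cannot offset an exponential in $n$, and truncation is in fact irrelevant once you use the sharp tail $\Prob(Y_\beta>t)\le d_\beta t^{-q_\beta}$ directly.

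What actually closes the gap is a \emph{distributional} (not expectational) estimate on the partition sum $\sum_\sigma e^{\beta q_\beta X_\sigma}$. Since $\beta q_\beta>1$, this is a \emph{supercritical} branching-random-walk sum, and the result you need (Madaule, Proposition~2.1 in \cite{ma11}, valid under $\E|\xi|^3e^\xi<\infty$ plus your hypothesis on $e^{\beta q_\beta\xi}$) is that $S_{n,\beta q_\beta}:=n^{3\beta q_\beta/2}\sum_\sigma e^{\beta q_\beta X_\sigma}$ has uniformly bounded moments of order $(1-\varepsilon)/(\beta q_\beta)$. The correct normalization carries the exponent $3/2$, not the $\sqrt n$ of a central limit theorem; this distinction is exactly what makes $\gamma<1/2$ emerge as the summability threshold. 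The paper then conditions on $\{X_\sigma\}$ and, instead of a union bound, writes
\[
\Prob\bigl(M_n< 2^{-n\widetilde\phi(\beta)}n^{-\gamma\beta}\bigr)=\E\prod_{\sigma\in\Sigma_n}\bigl(1-h_\beta(n^{-\gamma\beta}e^{-\beta X_\sigma})\bigr)
\ge \E\,\mathbf 1_{\Bcal_{n,\beta}}\exp\Bigl(-2d_\beta\, n^{(\gamma-3/2)\beta q_\beta}S_{n,\beta q_\beta}\Bigr),
\]
so that the probability of the bad event is controlled by $1-\E\exp(-c\,n^{(\gamma-3/2)\beta q_\beta}S_{n,\beta q_\beta})\le C\,n^{(1-\varepsilon)(\gamma-3/2)}$, which is summable for $\gamma<1/2$. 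A conditional union bound in place of the product formula would give the same arithmetic; the point is that the estimate must pass through the \emph{typical} size $n^{-3\beta q_\beta/2}$ of $\sum_\sigma e^{\beta q_\beta X_\sigma}$, never through its expectation.
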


In the Gaussian case there is another, more sophisticated way to get hold on the size of 
$\max_{\sigma\in \{ 0,1\}^n}\mu_n(I_\sigma)$ that is based on the following result.
\begin{theorem}\label{th:exact}
 Let $\xi$ be Gaussian and assume that $\beta> 1.$ Then there is a deterministic  bounded sequence $c(n)$, bounded away from $0$ such that
\begin{equation}
c(n)\sum_{\sigma\in\Sigma_n}\left(n^{1/2 }e^{X_\sigma}Y_1^{(\sigma)}\right)^{\beta}\stackrel{d}\to  Y_\beta.
\end{equation}
\end{theorem}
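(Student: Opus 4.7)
The plan is to condition on the branching random walk $(X_\sigma)_{\sigma \in \Sigma_n}$ and apply a stable central limit theorem to the iid random variables $W_\sigma := (Y_1^{(\sigma)})^\beta$, then identify the resulting limit with $Y_\beta$ via the subordination formula \eqref{lowmeasure}. Writing
\[
S_n := \sum_{\sigma \in \Sigma_n} \bigl(n^{1/2} e^{X_\sigma} Y_1^{(\sigma)}\bigr)^\beta = n^{\beta/2} \sum_\sigma e^{\beta X_\sigma} W_\sigma,
\]
Buraczewski's tail asymptotic $\Prob(Y_1 > x) \sim C_\ast / x$ (Theorem \ref{th:tail}) yields $\Prob(W_\sigma > x) \sim C_\ast / x^{1/\beta}$, placing $W_\sigma$ in the domain of normal attraction of a positive $(1/\beta)$-stable law. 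By the classical Tauberian theorem its Laplace transform obeys $g(t) := \E e^{-tW} = 1 - \kappa t^{1/\beta} + o(t^{1/\beta})$ as $t \downarrow 0$, with $\kappa := C_\ast \Gamma(1 - 1/\beta)$ (finite since $\beta>1$).

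Conditioning on $(X_\sigma)$ factorizes the Laplace transform of $S_n$:
\[
\E e^{-u S_n} = \E \prod_{\sigma \in \Sigma_n} g(t_\sigma), \qquad t_\sigma := u n^{\beta/2} e^{\beta X_\sigma}, \quad u > 0.
\]
Using $\sum_\sigma t_\sigma^{1/\beta} = u^{1/\beta} n^{1/2} Z_{1,n}$, one decomposes $\sum_\sigma \log g(t_\sigma) = -\kappa u^{1/\beta} n^{1/2} Z_{1,n} + R_n$, with remainder $R_n := \sum_\sigma [\log g(t_\sigma) + \kappa t_\sigma^{1/\beta}]$. The leading term tends in probability to $-\kappa u^{1/\beta} Y_1$ by \eqref{eq:aidekonshi}. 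The principal technical obstacle is showing $R_n \to 0$ in probability: writing $\log g(t) + \kappa t^{1/\beta} = \varepsilon(t) t^{1/\beta}$ with $\varepsilon(t) \to 0$ as $t \downarrow 0$ and $\varepsilon$ bounded on $(0,\infty)$, one bounds $|R_n| \leq u^{1/\beta} (\sup_\sigma \varepsilon(t_\sigma))\, n^{1/2} Z_{1,n}$. For Gaussian $\xi$, classical Bramson-type bounds give $\max_\sigma X_\sigma = -\tfrac{3}{2} \log n + O_\Prob(1)$, hence $\max_\sigma t_\sigma \leq u\, n^{-\beta} e^{O_\Prob(1)} \to 0$ (using $\beta > 1$); combined with the tightness of $n^{1/2} Z_{1,n}$ this forces $R_n \to 0$ in probability.

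Once $\log \prod_\sigma g(t_\sigma) \to -\kappa u^{1/\beta} Y_1$ in probability and the product takes values in $[0,1]$, dominated convergence yields $\E e^{-u S_n} \to \E e^{-\kappa u^{1/\beta} Y_1}$ for every $u > 0$. The stable subordinator's Laplace transform, $\E[e^{-u L_{1/\beta}(Y_1)} \mid Y_1] = e^{-Y_1 u^{1/\beta}}$, rewrites this limit as $\E e^{-u \kappa^\beta L_{1/\beta}(Y_1)}$, so by L\'evy continuity $S_n \stackrel{d}{\to} \kappa^\beta L_{1/\beta}(Y_1)$. Specializing \eqref{lowmeasure} to $t = 1$ gives $L_{1/\beta}(Y_1) \eqlaw c^{-1} Y_\beta$, whence $S_n \stackrel{d}{\to} (\kappa^\beta/c)\, Y_\beta$, and the constant choice $c(n) \equiv c/\kappa^\beta$ concludes the proof; the allowance for a sequence $c(n)$ bounded and bounded away from $0$ leaves room to absorb any finite-$n$ correction coming from sub-leading terms in Buraczewski's tail asymptotic.
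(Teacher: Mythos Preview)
Your argument is correct and genuinely different from the paper's. The paper exploits the branching structure to obtain a nonlinear recursion for the reparametrized Laplace transform $H_{n,\beta}$; after a change of variables to $G_{n,\beta}$ it invokes the KPP-type traveling-wave analysis of \cite{we11} (Theorem~\ref{th:webb}) together with a maximum-principle sandwich (Proposition~\ref{max}, Lemma~\ref{compa}) to establish convergence of $G_{n,\beta}(\cdot+m_{n,\beta})$, and finally identifies the limit as a smoothing-transform fixed point via \cite{duli83}. You bypass this machinery entirely: conditioning on $(X_\sigma)$, Buraczewski's tail estimate puts $(Y_1^{(\sigma)})^\beta$ in the normal domain of attraction of a positive $1/\beta$-stable law, and a first-order expansion of the conditional Laplace transform reduces everything to the Seneta--Heyde convergence \eqref{eq:aidekonshi} plus $\max_\sigma t_\sigma\to 0$; the limit is then identified with $Y_\beta$ by importing the subordination formula \eqref{lowmeasure} from \cite{barhova12}. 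Your route is considerably more elementary and in fact delivers a \emph{constant} $c(n)\equiv c/\kappa^\beta$, slightly sharper than the bounded sequence the paper obtains; the trade-off is that you take \eqref{lowmeasure} as a black box, whereas the paper's proof is independent of it (and the recursion framework additionally yields Lemmas~\ref{le:phimodulus} and~\ref{le:tailpr} used later in the paper). Two small points: boundedness of $\varepsilon$ on all of $(0,\infty)$ is neither obvious nor needed, since $\max_\sigma t_\sigma\to 0$ in probability lets you restrict to a neighbourhood of the origin; and your closing remark about absorbing sub-leading terms into $c(n)$ is unnecessary, as your limit argument already produces a constant.
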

The proof of this result is more technical (see Section \ref{se:generating} below) and applies the generating function techniques used in \cite{we11}. With more work one could remove the bounded sequence $c(n)$, but this formulation is sufficient for the analysis of the local behavior of $\mu_1$ carried out in Section \ref{se:localbehavior}.

Next we turn to the KPZ formula relating  the Hausdorff dimension of fractals in $[0,1]$  in the Euclidean metric and their dimension under a random metric. Specifically, for $\beta\in (0,1]$ define  on $[0,1]$  the random metric $\rho_\beta(x,y)=\mu_\beta([y,x])$ for $0\le x\le y\le 1$.  

For KPZ relations we replace \eqref{eq:momentconditions} by the weaker condition
\begin{equation}\label{phi2}
 \E \, \xi^2 e^\xi<\infty.
\end{equation}
The following result is an extension of the result   by Benjamini and Schramm \cite{BS09} on $\beta<1$ to the critical case $\beta=1$.
\begin{theorem}
\label{th:BS}
Suppose that $\phi(-s)>-\infty$ for all $s\in (0,1/2)$. Let $K\subset[0,1]$ be some (deterministic) nonempty Borel set, let $\zeta_0$ denote its Hausdorff dimension with respect to the Euclidean metric, and let $\zeta$ denote its Hausdorff dimension  with respect to the random metric $\rho_1$. Then a.s. $\zeta$  is the unique solution of the equation
\begin{equation}\label{KPZ}
\zeta_0=\phi(\zeta)
\end{equation}
in $[0,1]$.
\end{theorem}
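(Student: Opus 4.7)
\emph{Strategy.} The function $\phi$ is concave with $\phi(0)=0$, $\phi(1)=1$, and $\phi'(1)=0$ by \eqref{eq:criticalnormalization}; concavity together with $\phi'(1)=0$ forces $\phi'\geq 0$ on $[0,1]$, so $\phi$ is strictly increasing from $0$ to $1$ there, and hence \eqref{KPZ} admits a unique solution $\zeta_\ast\in[0,1]$. The plan is to establish $\zeta\leq \zeta_\ast$ and $\zeta\geq \zeta_\ast$ almost surely, following the dyadic covering/Frostman template of \cite{BS09} but substituting the scaling relation \eqref{eq:semi-stable} and the tail asymptotics from Theorem \ref{th:tail} for the $L^p$-boundedness used in the subcritical case.

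\emph{Upper bound $\zeta\leq\zeta_\ast$.} Fix $\alpha\in(\zeta_\ast,1)$ and choose $\alpha_0\in(\zeta_0,\phi(\alpha))$. Given $\delta,\epsilon>0$, I would cover $K$ by Euclidean intervals $\{U_i\}$ with $|U_i|<\delta$ and $\sum_i|U_i|^{\alpha_0}<\epsilon$; replacing each $U_i$ by at most three dyadic intervals of comparable length we may assume $U_i=I_{\sigma_i}$. Using \eqref{eq:semi-stable} and $\E Y_1^\alpha<\infty$ for $\alpha<1$ (Theorem \ref{th:tail}),
\begin{equation*}
\E\sum_i\mu_1(I_{\sigma_i})^\alpha=\E(Y_1^\alpha)\sum_i|I_{\sigma_i}|^{\phi(\alpha)}\leq C\,\delta^{\phi(\alpha)-\alpha_0}\,\epsilon,
\end{equation*}
which tends to $0$ as $\delta\to 0$. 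Markov's inequality then produces, with probability arbitrarily close to $1$, a $\rho_1$-cover of $K$ with $\sum(\mathrm{diam})^\alpha$ arbitrarily small, so $H^\alpha_{\rho_1}(K)=0$ almost surely and $\zeta\leq \alpha$. Letting $\alpha\downarrow\zeta_\ast$ finishes this direction.

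\emph{Lower bound $\zeta\geq\zeta_\ast$.} Assume $\zeta_\ast>0$, pick $\alpha\in(0,\zeta_\ast)$ and $\alpha_0\in(\phi(\alpha),\zeta_0)$, and invoke Frostman's lemma to obtain a probability measure $\nu$ supported on $K$ with $\nu(I)\leq C|I|^{\alpha_0}$. The key step is to prove the almost sure existence of a random constant $M=M(\omega)<\infty$ with
\begin{equation*}
\nu(I_\sigma)\leq M\,\mu_1(I_\sigma)^{\alpha}\qquad\text{for all dyadic intervals }I_\sigma.
\end{equation*}
Given such a bound, any $\rho_1$-cover of $K$ by intervals $\{J_i\}$ of small $\mu_1$-mass (which is what small $\rho_1$-diameter amounts to, once Corollary \ref{co:noatoms} is invoked) can be refined to a dyadic cover $\{I_{\sigma_i}\}$, yielding $1=\nu(K)\leq M\sum_i\mu_1(I_{\sigma_i})^\alpha$ and hence $H^\alpha_{\rho_1}(K)\geq 1/M>0$, so $\zeta\geq\alpha$; letting $\alpha\uparrow\zeta_\ast$ concludes.

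\emph{Main obstacle.} The whole proof hinges on the pointwise bound above, which I would attack by Borel--Cantelli: writing $\mu_1(I_\sigma)\eqlaw e^{X_\sigma}Y_1^{(\sigma)}$ via \eqref{eq:semi-stable}, one has
\begin{equation*}
\Prob\!\bigl(\mu_1(I_\sigma)<t\bigr)\leq \Prob\!\bigl(e^{X_\sigma}<\sqrt t\bigr)+\Prob\!\bigl(Y_1<\sqrt t\bigr),
\end{equation*}
and the challenge is to make the resulting tail bound beat the $2^n$ union bound at level $n$ with $t\sim 2^{-n\alpha_0/\alpha}$. This is precisely where the extra hypothesis $\phi(-s)>-\infty$ for $s\in(0,1/2)$ is used: it gives $\E e^{-s\xi}<\infty$ and thus a Chernoff/Cram\'er estimate on the left tail of $X_\sigma=\sum_{k=1}^n \xi_{\sigma|k}$. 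The negative-moment estimate for $Y_1$ needed to handle the second term should come from the critical smoothing equation $Y_1\eqlaw e^{\xi_0}Y_1^{(0)}+e^{\xi_1}Y_1^{(1)}$ (a product of two independent factors being simultaneously tiny is very unlikely). Quantifying these two left-tail bounds sharply enough, and dealing carefully with the possible degeneration of $\rho_1$ on intervals where $\mu_1$ vanishes, are the main technical points; once they are in place, the rest of the argument parallels \cite{BS09} verbatim.
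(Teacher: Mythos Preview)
Your upper bound argument is essentially the paper's (and \cite{BS09}'s): cover $K$ by dyadic intervals and use $\E\mu_1(I_\sigma)^\alpha=(\E e^{\alpha\xi})^n\E Y_1^\alpha=2^{-n\phi(\alpha)}\E Y_1^\alpha$, with $\E Y_1^\alpha<\infty$ for $\alpha<1$.

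The lower bound, however, has a real gap. The pointwise inequality $\nu(I_\sigma)\leq M\,\mu_1(I_\sigma)^\alpha$ for \emph{all} dyadic $I_\sigma$ is simply false for $\alpha$ close to $\zeta_\ast$. Take the Gaussian case and $K=[0,1]$, so $\nu$ is Lebesgue and $\zeta_\ast=1$. Your inequality would force $\min_{\sigma\in\Sigma_n}\mu_1(I_\sigma)\gtrsim 2^{-n/\alpha}$. But $\mu_1(I_\sigma)\eqlaw e^{X_\sigma}Y_1^{(\sigma)}$, and the minimum of the branching random walk $(X_\sigma)_{\sigma\in\Sigma_n}$ sits near $-4n\log 2$ (the velocity of the BRW for $-\xi$ is $4\log 2$), so $\min_\sigma\mu_1(I_\sigma)$ is of order $2^{-4n}$. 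The inequality therefore fails for every $\alpha>1/4$. The Borel--Cantelli/Chernoff route you sketch confirms this: for a single $\sigma$, Markov with the negative moment gives $\Prob(\mu_1(I_\sigma)<t)\le t^s 2^{-n\phi(-s)}\E Y_1^{-s}$, and after the union bound over $\Sigma_n$ the condition for summability becomes $s\alpha_0/\alpha+\phi(-s)>1$, which in the Gaussian case ($\phi(-s)=-2s-s^2$) optimises to $\alpha_0>4\alpha$, again giving only $\alpha<\zeta_0/4<\zeta_\ast$. Restricting to intervals meeting $\mathrm{supp}(\nu)$ does not help: $K$ is deterministic and independent of $\mu_1$, so there is no mechanism preventing the intervals where $e^{X_\sigma}$ is smallest from meeting $K$.

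What the paper does instead (following \cite{BS09}) is replace the deterministic Frostman measure by a \emph{random} one adapted to $\mu_1$: starting from a finite-energy measure $\nu_0$ on $K$, define $\nu_n$ with density $e^{sX_\sigma}/(\E e^{s\xi})^n$ on each $I_\sigma$, where $s=\phi^{-1}(t)$ for some $t<\zeta_0$. This tilt pushes mass toward the intervals where $\mu_1$ is large, which is exactly what is needed. One then shows $\E\,\mathcal{E}_s(\nu_n,\rho_{1})=O(1)\,\mathcal{E}_t(\nu_0)$ and, via a H\"older argument using $\E\ell^{h/2}<\infty$ for $h<2$ (this is where the criticality $\E Y_1=\infty$ forces a modification of the subcritical proof), that $\nu_n([0,1])$ is bounded in $L^{h/(1+s)}$ for some $h>1+s$. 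Hence $\nu_n$ converges to a nondegenerate measure on $K$ with finite $s$-energy for $\rho_1$, giving $\zeta\ge s$. The hypothesis $\phi(-s)>-\infty$ on $(0,1/2)$ enters through Molchan's result $\E Y_1^{-s}<\infty$ for $s\in(0,1)$, which is needed in the energy bound, not through a left-tail large deviation for $X_\sigma$.
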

In Section \ref{se:kpz} we also extend the KPZ relation to the supercritical case $\beta>1$. In that case, since the measure $\mu_\beta$ is discrete, $\rho_\beta$ is not a metric anymore; nevertheless $\mu_\beta$ can be formally used in the same way as if $\rho_\beta$ were a metric to define the   Hausdorff dimension  of sets $K$ which do not contain any atom of $\mu_\beta$ almost surely. In that case the Hausdorff dimension of $K$ relative to $\mu_\beta$ is the unique solution of $\zeta_0=\phi(\beta \zeta)$ almost surely, Theorem \ref{KPZalpha}.

\medskip

We refer to  Section \ref{se:multifractal} for precise statements on the multifractal spectra of the measures $\mu_\beta$ in case $\beta\geq 1.$ Finally, in Section \ref{se:localbehavior} we consider the almost sure $\mu_1$-almost everywhere local behavior of $\mu_1$ in the case of a Gaussian $\xi$.

\medskip

We close this section by comparing the phase transition of the 
 cascade measures to that of  the Gibbs measures of 
 Random Energy Models (REM's). In Derrida's REM \cite{Derrida} the $X_\sigma$, $\sigma\in\Sigma_N$ are taken
 i.i.d. Gaussian with variance $N$ (in our normalization where $\beta_c=1$) and  $\mu_{\beta,N}$ is normalized 
 to be a  probability measure  i.e. one considers  the Gibbs measure  $\tilde\mu_{\beta,N}(I_\sigma)=\tilde{Z}_{\beta,N}^{-1}e^{\beta X_\sigma}$
 with $\tilde Z_{\beta,N}=\sum_\sigma e^{\beta X_\sigma}$. 
REM is a simple model of a disordered spin system where $X_\sigma$
 is the (random) energy of the spin configuration $\sigma\in\{0,1\}^N$. In REM  the energies
are independent whereas in  the cascade model they are strongly correlated.
  
 REM  has a freezing transition very similar to the cascade model. In \cite{bov} it is proven
 that for $\beta\leq\beta_c$, $\tilde{\mu}_{\beta,N}\to \tilde{\mu}_{\beta}$ almost surely  as $N\to\infty$ and the
 limit measure $\tilde{\mu}_{\beta}$ is the  Lebesgue measure. For 
$\beta>\beta_c$  \cite{bov} prove the analogue of \eqref{lowmeasure} namely that $\tilde{\mu}_{\beta,N}$
converges in distribution to $\tilde{\mu}_{\beta}$ given by
$
\tilde{\mu}_{\beta}([0,t])\stackrel{d}= {L_{\frac{1}{\beta}}(t)}/{L_{\frac{1}{\beta}}(1)}$. This result is the REM analogue of 
\eqref{lowmeasure} since $\tilde{\mu}_1[0,t]=t$. 
  It is actually known (see \cite{bk}, \cite{ABK}) that the Gibbs
weights in REM and cascade when ordered in decreasing size converge to the same Poisson-Dirichlet
process. However, the result  \eqref{lowmeasure} is more general as it also gives locations of the atoms.

\section{Proof of Theorem \ref{th:elementary}}\label{se:eleproof}

Before the proof of Theorem \ref{th:elementary} we restate the result on the deterministic normalization that is needed to make the critical case partition function $Z_{1,n}$ converge to a nontrivial random variable in the $n \to \infty$ limit.

\begin{theorem}[A\"{i}d\'{e}kon and Shi \cite{aishi11}]\label{th:deterministic}
Assume  that $\E\, \xi^2 e^\xi <
\infty$. Then
$$
n^{1/2} Z_{1,n} =n^{1/2}\sum_{\sigma\in\Sigma_n}e^{X_\sigma}\stackrel{\Prob}{\rightarrow} Y_1 \quad {\rm as}\quad n\to\infty,
$$
where almost surely $0<Y_1<\infty$.
\end{theorem}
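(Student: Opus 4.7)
The plan is to identify $Y_1$ (up to a positive constant) as the limit of the \emph{derivative martingale} and then establish convergence of $\sqrt{n}\,Z_{1,n}$ to this limit via a spine decomposition combined with barrier estimates for centered random walks. First observe that the critical conditions \eqref{eq:criticalnormalization} make both $Z_{1,n}$ and the signed quantity $D_n:=-\sum_{\sigma\in\Sigma_n}X_\sigma e^{X_\sigma}$ martingales with respect to the natural filtration. By a Biggins--Kyprianou type argument, under $\E \xi^2 e^\xi<\infty$ the martingale $D_n$ converges almost surely to a nonnegative limit $D_\infty$. Strict positivity $D_\infty>0$ a.s.\ follows from the branching recursion $D_\infty \eqlaw \sum_{i=0,1} e^{\xi_i}\bigl(D_\infty^{(i)}-\xi_i\,Y_{1,\infty}^{(i)}\bigr)$ and Kahane's zero--one dichotomy.

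Second, I would invoke the size-biased probability measure $\widehat{\Prob}$ under which a distinguished spine is chosen whose increments $\xi_{s_k}$ are i.i.d.\ with law $e^\xi\,\d\Prob/\E e^\xi$; by \eqref{eq:criticalnormalization} the spine walk $S_k:=X_{s_k}$ is centered with variance $\sigma^2=2\E \xi^2e^\xi<\infty$, and off-spine subtrees branch as independent copies of the original cascade. The many-to-one lemma then reads
\begin{equation*}
\E\sum_{\sigma\in\Sigma_n}e^{X_\sigma}F(X_{\sigma|1},\dots,X_{\sigma|n})=\widehat{\E}\,F(S_1,\dots,S_n)
\end{equation*}
for every nonnegative Borel $F$. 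Using this together with the classical ballot-type estimate $\widehat{\Prob}(\min_{j\le n}S_j\ge -a)\sim c\,a/\sqrt{n}$, I would introduce the barrier-truncated partition function
\begin{equation*}
Z_{1,n}^{(a)}:=\sum_{\sigma\in\Sigma_n}e^{X_\sigma}\ind\{X_{\sigma|k}\ge -a,\ \forall\,k\le n\}
\end{equation*}
and show, via first- and second-moment computations, that $\sqrt{n}\bigl(Z_{1,n}-Z_{1,n}^{(a)}\bigr)\to 0$ in probability as $n\to\infty$ and then $a\to\infty$. The truncation is essential because at criticality the unconditioned second moment $\E Z_{1,n}^2$ diverges, whereas the barrier makes $\E(Z_{1,n}^{(a)})^2$ controllable through the many-to-one lemma applied to two spines.

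Third, to identify the limit of $\sqrt{n}\,Z_{1,n}^{(a)}$, I would condition on the spine under $\widehat{\Prob}$, use the Bolthausen--Iglehart invariance principle for centered walks conditioned to stay above a barrier, and write $\sqrt{n}\,Z_{1,n}^{(a)}$ as a rescaled sum of independent contributions of off-spine subtrees. A Seneta--Heyde type identity then links this rescaled sum to the derivative martingale, yielding $\sqrt{n}\,Z_{1,n}^{(a)}\xrightarrow{\Prob}c_1\,D_\infty^{(a)}$ with $D_\infty^{(a)}\uparrow c_2\,D_\infty$ as $a\to\infty$; setting $Y_1:=c_1c_2\,D_\infty$ delivers both the claimed convergence and $0<Y_1<\infty$ a.s. The main obstacle is precisely this last identification: upgrading tightness to convergence in probability requires delicate control of the second moment of the conditioned sum and a matching of the prefactor arising from conditioning the centered spine walk with the Seneta--Heyde normalization of $D_n$. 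Once this is done, convergence in probability (as opposed to almost surely, which is known to fail) comes naturally from the fact that the limiting object is measurable with respect to the $\sigma$-algebra generated by the entire tree rather than any finite level.
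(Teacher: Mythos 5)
The paper does not prove this statement at all: it is imported verbatim from A\"{i}d\'{e}kon and Shi \cite{aishi11} (the Seneta--Heyde scaling for the branching random walk) and used as a black box in Section \ref{se:eleproof}. So there is no in-paper argument to compare against; the only meaningful comparison is with the strategy of the cited reference, and your outline does reproduce its architecture faithfully: the derivative martingale $D_n=-\sum_{\sigma\in\Sigma_n}X_\sigma e^{X_\sigma}$, the size-biased measure and spine, the many-to-one identity with a centered walk of variance $2\E\,\xi^2e^\xi$, the barrier truncation to tame the divergent second moment, and the identification of the limit of $\sqrt{n}Z_{1,n}^{(a)}$ with (a constant times) $D_\infty$. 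The removal of the barrier is also handled the right way, since $\inf_{\sigma\in\Sigma}X_\sigma>-\infty$ a.s.\ makes $Z_{1,n}=Z_{1,n}^{(a)}$ outside an event of probability $o_a(1)$ uniformly in $n$.

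Two steps are under-justified as written. First, positivity of $D_\infty$: the fixed-point relation $D_\infty\eqlaw\sum_{i}e^{\xi_i}D_\infty^{(i)}$ (note that your extra term $\xi_i Y_{1,\infty}^{(i)}$ vanishes, because the critical additive martingale tends to $0$ a.s.) together with a zero--one law only yields $\Prob(D_\infty>0)\in\{0,1\}$; ruling out the value $0$ requires the Biggins--Kyprianou truncated martingale $\sum_\sigma R(X_\sigma+a)e^{X_\sigma}\ind\{X_{\sigma|k}\ge -a\ \forall k\}$ built from the renewal function $R$ of the strict descending ladder heights of the spine walk, whose nonnegativity and uniform integrability give a nondegenerate limit. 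Even the a.s.\ convergence of the signed martingale $D_n$ itself is obtained only through this truncation, so it cannot be dismissed as a side remark. Second, your closing explanation of why the convergence is only in probability is not the actual reason: measurability of the limit with respect to the whole tree is equally true in the subcritical case \eqref{eq:kahanepeyrier}, where convergence is almost sure. The genuine obstruction, proved in \cite{aishi11}, is that $\liminf_{n\to\infty}\sqrt{n}\,Z_{1,n}=0$ almost surely, so the sequence oscillates on a null-density set of times. Neither issue is a wrong turn --- both are resolved exactly along the lines you indicate in the literature you cite --- but as a self-contained proof the proposal leaves precisely the hardest estimates (the second-moment bound for the barrier-truncated sum and the matching of normalizations) unexecuted, as you yourself acknowledge.
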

We also need the following result of Buraczewski \cite{bu09} that generalizes Guivarch's tail asymptotics of subcritical Mandelbrot cascades \cite{gu90} to the critical case.
\begin{theorem}[Buraczewski \cite{bu09}]\label{th:tail} Assume \eqref{eq:momentconditions} and that the distribution of $\xi$ is nonlattice.
Then the distribution function $h(x):=\Prob (Y_1>x)$ satisfies
$$
h(x) \sim \frac{d}{x} \quad {\rm as} \quad x \to \infty,
$$
i.e. there is a positive constant $d$ such that $\lim_{x\to\infty}xh(x)=d$.

\noindent If the distribution of $\xi$ is lattice (i.e. supported on some arithmetic sequence), one has
$0<d_1\leq xh(x)\leq d_2<\infty$ for large $x$.
\end{theorem}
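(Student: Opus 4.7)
The strategy is to derive a distributional fixed-point equation for $Y_1$ from the tree recursion, convert it into a renewal equation driven by a centered random walk, and then apply a renewal theorem. Splitting $n^{1/2}Z_{1,n}$ according to the two level-one subtrees and sending $n\to\infty$ via Theorem \ref{th:deterministic} gives the smoothing identity
\begin{equation*}
Y_1 \stackrel{d}{=} e^{\xi_0}Y_1^{(0)}+e^{\xi_1}Y_1^{(1)},
\end{equation*}
with $Y_1^{(i)}$ i.i.d.\ copies of $Y_1$ independent of $(\xi_0,\xi_1)$. The conditions \eqref{eq:criticalnormalization} give $\E[e^{\xi_0}+e^{\xi_1}]=1$, identifying this as the \emph{critical} smoothing equation, and make the size-biased law $d\nu:=2e^\xi\, dP_\xi$ a mean-zero probability measure: $\E_\nu\xi=2\E[\xi e^\xi]=0$.

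Next, set $\phi(s)=\E e^{-sY_1}$ and $\psi(s)=1-\phi(s)$. By independence, $\phi(s)=\E[\phi(se^{\xi_0})\phi(se^{\xi_1})]$, so
\begin{equation*}
\psi(s)=2\E\psi(se^\xi)-\E[\psi(se^{\xi_0})\psi(se^{\xi_1})].
\end{equation*}
With $s=e^{-t}$ and $F(t):=e^t\psi(e^{-t})$ the linear part becomes convolution with $\nu$ on the logarithmic scale, yielding the renewal-type equation
\begin{equation*}
F(t)-(F*\nu)(t)=-e^{-t}E(t),
\end{equation*}
where $E(t)$ collects the quadratic correction from the product term. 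Integration by parts yields $\psi(s)=s\int_0^\infty e^{-sx}\Prob(Y_1>x)\,dx$, so by the Karamata Tauberian theorem the desired asymptotic $\Prob(Y_1>x)\sim d/x$ is equivalent to $\psi(s)\sim ds\log(1/s)$ as $s\to 0^+$, that is, $F(t)\sim dt$ as $t\to\infty$.

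The third step is to solve this renewal equation asymptotically. Iterating produces a formal series involving the convolutions $\nu^{*k}*g$, where $g(t)=-e^{-t}E(t)$. Because $\nu$ is centered and has finite variance under \eqref{eq:momentconditions}, the local central limit theorem controls these convolutions; provided $g$ decays fast enough for the resulting expression to make sense (a step requiring the a priori upper bound $\psi(s)=O(s\log(1/s))$ to be established first, e.g.\ by induction on dyadic scales $s=2^{-n}$), one obtains $F(t)/t\to d$ and identifies $d$ as an explicit integral of $g$ weighted by the spine random walk. In the lattice case, the lattice local CLT yields only two-sided bounds $0<d_1\le xh(x)\le d_2<\infty$, reflecting oscillations of $F(t)/t$ along the arithmetic progression supporting $\nu$.

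The main obstacle is the criticality of the fixed-point equation: at $\beta=1$ the linearization of the smoothing transform has spectral radius one along the direction governing the tail, so the Grincevi\v{c}ius--Goldie implicit renewal theorem used in the subcritical case is unavailable, and one cannot a priori rule out competing scalings like $1/(x\log x)$ or $(\log\log x)/x$. Establishing the sharp $d/x$ behavior therefore requires both the preliminary logarithmic upper bound on $\psi$ and a delicate integrability check against the renewal kernel of a recurrent zero-mean walk, which is substantially more demanding than the transient (positive-drift) situation of classical Kesten--Goldie theory.
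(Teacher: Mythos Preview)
The paper does not prove this statement; it is quoted from Buraczewski \cite{bu09} and used as a black box in Section~\ref{se:eleproof}. There is therefore no proof in the paper to compare your attempt against.

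Your outline is in the spirit of the argument in \cite{bu09} --- the critical smoothing identity $Y_1\stackrel{d}{=}e^{\xi_0}Y_1^{(0)}+e^{\xi_1}Y_1^{(1)}$, the tilted law $d\nu=2e^\xi\,dP_\xi$ which has mean zero by \eqref{eq:criticalnormalization}, and a renewal equation driven by a recurrent random walk --- but what you have written is a plan, not a proof. You correctly identify the central difficulty (the linearized operator has spectral radius one, so Kesten--Goldie does not apply and one must do renewal theory for a \emph{centered} walk) and then explicitly leave it unresolved: the a~priori bound $\psi(s)=O(s\log(1/s))$, the integrability of the forcing term against the recurrent potential kernel, and the actual asymptotic computation are all deferred. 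These are precisely the steps that occupy the bulk of \cite{bu09}, and there is no known shortcut.

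One further technical point: the Tauberian passage you invoke, from $\psi(s)\sim ds\log(1/s)$ to $h(x)\sim d/x$, is the boundary case $\rho=0$ of the monotone density theorem. Karamata gives only $\int_0^x h(t)\,dt\sim d\log x$, and extracting $h(x)\sim d/x$ from this under mere monotonicity of $h$ is not automatic; you would either need an additional argument here or, as in \cite{bu09}, work with the tail function directly via Goldie's implicit renewal scheme rather than through Laplace transforms.
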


\begin{proof}[Proof of Theorem {\rm \ref{th:elementary}}.] \quad
We prove \eqref{eq:raja1} first. Fix $\gamma\in (0,1/2)$ and let 
$\delta_1>0$. By Theorem \ref{th:tail} there exists $x_1>0$ such that $h(x)\le (d+\delta_1)/x\le 1/2$ for all $x>x_1$, and Theorem \ref{th:deterministic} shows that
\begin{equation}\label{eq:max1}
\Prob (\Bcal_n)\to 1\quad \textrm{as} \quad n\to\infty, \quad \textrm{where} \quad \Bcal_n:= 
 \big\{ \max_{\sigma \in \Sigma_n} e^{X_\sigma} \leq n^{-\gamma}/x_1 \big\}.
\end{equation}
Recall that by \eqref{eq:semi-stable} the law of the measure $\mu_1$ satisfies
$$
\left(\mu_1(I_\sigma)\right)_{\sigma\in \Sigma_n} \eqlaw \left(e^{X_\sigma}Y_1^{(\sigma)}\right)_{\sigma\in \Sigma_n},
$$
where $\{Y_1^{(\sigma)}\}_{\sigma\in\Sigma_n}$ is a family of independent copies of $Y_1$, also independent from $\{X_\sigma\}_{\sigma\in\Sigma_n}$.
Using  this independence we may compute
\begin{eqnarray*}
\Prob\left( \max_{\sigma \in \Sigma_n} e^{X_\sigma} Y_1^{(\sigma)} < n^{-\gamma} \, \Big\vert \, \{X_\sigma\} \right) & = & \prod_{\sigma \in \Sigma_n} \left( 1 - \Prob\left( Y_1^{(\sigma)} \geq n^{-\gamma} e^{-X_\sigma} \, \Big\vert \, \{X_\sigma\} \right) \right) \\
& = & \prod_{\sigma \in \Sigma_n} \left( 1 - h\left(n^{-\gamma} e^{-X_\sigma}\right) \right).
\end{eqnarray*}
By taking expectations this gives
\begin{equation}\label{eq:atomprob}
\Prob\left( \max_{\sigma \in \Sigma_n} e^{X_\sigma} Y_1^{(\sigma)} < n^{-\gamma}\right) = \E \prod_{\sigma \in \Sigma_n} \left( 1 - h\left(n^{-\gamma} e^{-X_\sigma}\right) \right).
\end{equation}
We employ the elementary inequality $1-x \geq e^{-2x}$ for $x \in [0,1/2]$ and Theorem \ref{th:tail} to obtain
\begin{equation}\label{eq:eq}
\begin{split}
\Prob\left( \max_{\sigma \in \Sigma_n} e^{X_\sigma} Y_1^{(\sigma)} < n^{-\gamma} \right)  \geq &\; \E \,  \ind_{\Bcal_n} \prod_{\sigma \in \Sigma_n} \left( 1 - h\left(n^{-\gamma} e^{-X_\sigma}\right) \right) \\
 \geq &\; \E \,  \ind_{\Bcal_n} \exp\left( - 2(d+\delta_1)n^\gamma \sum_{\sigma \in \Sigma_n} e^{X_\sigma} \right).
\end{split}
\end{equation}
By Theorem  \ref{th:deterministic} we have $n^\gamma  \sum_{\sigma \in \Sigma_n} e^{X_\sigma}\to 0$ in   probability. By using this fact together with (\ref{eq:max1}) and the bounded convergence theorem we deduce that
$$
\Prob\left( \max_{\sigma \in \Sigma_n} e^{X_\sigma} Y_1^{(\sigma)} < n^{-\gamma} \right) \longrightarrow 1 \quad \textrm{as } n \to\infty.
$$
This proves \eqref{eq:raja1}.

To prove \eqref{eq:raja2} we let $\gamma > 1/2$ and use the estimate $1-x \leq e^{-x}$ for $x \geq 0$ in \eqref{eq:atomprob} to get
$$
\Prob\left( \max_{\sigma \in \Sigma_n} e^{X_\sigma} Y_1^{(\sigma)} < n^{-\gamma}\right) \leq \E \exp\left( -\sum_{\sigma \in \Sigma_n} h\left(n^{-\gamma} e^{-X_\sigma}\right)\right).
$$
We deduce from \eqref{eq:madaule}, or alternatively from \cite{we11} in the Gaussian case, that    $n^{\frac{3}{2}-\varepsilon}\max_{\sigma \in \Sigma_n} e^{X_\sigma}\underset{n\to\infty}{\stackrel{d}{\longrightarrow}} 0$ for any $\varepsilon > 0$, whence 
$$
\Prob\left( \max_{\sigma \in \Sigma_n} e^{X_\sigma} \leq n^{-\frac{3}{2}+\varepsilon}\right) \stackrel{n\to\infty}{\longrightarrow} 1.
$$
Especially it follows that if $\gamma\in (1/2,1)$ we have
$$
\Prob(\Acal_n) \stackrel{n \to \infty}{\longrightarrow} 1 \quad {\rm for} \quad \Acal_n := \left\{ n^{1/2} < n^{-\gamma} \min_{\sigma \in \Sigma_n} e^{-X_\sigma} \right\}.
$$
By Theorem \ref{th:tail}, we may find $\delta_2 \in (0,d)$ and $x_2>0$ such that $h(x)> (d-\delta_2)/x$ for all $x>x_2$. Consequently,   we have 
$$
\Prob\left( \max_{\sigma \in \Sigma_n} e^{X_\sigma} Y_1^{(\sigma)} < n^{-\gamma}\right) \leq \E \ind_{\Acal_n} \exp\left( - (d-\delta_2) n^\gamma \sum_{\sigma \in \Sigma_n} e^{X_\sigma} \right) + (1-\Prob(\Acal_n))
$$
for all large $n$. Since $\gamma > 1/2$, Theorem \ref{th:deterministic}, the bounded convergence theorem, and the fact that $\Prob(\Acal_n) \to 1$ imply that
$$
\Prob\left( \max_{\sigma \in \Sigma_n} e^{X_\sigma} Y_1^{(\sigma)} < n^{-\gamma}\right) \longrightarrow 0 \quad {\rm as} \quad n \to \infty.
$$
This concludes the proof of \eqref{eq:raja2} for $\gamma\in (1/2,1)$, and after this the case $\gamma\ge 1$ is obvious.

\end{proof}

\section{Proofs of Theorems \ref{th:modulus} and \ref{th:submodulus}.}\label{se:modulus}

In this section we employ the notations used in the proof of Theorem \ref{th:elementary}. In addition, we denote 
\begin{equation}\label{sn}
S_{n,1}:=n^{1/2}\sum_{\sigma\in\Sigma_n} e^{X_\sigma} \quad \textrm{and} \quad S_{n,\theta}:=n^{\frac{3\theta}{2}}\sum_{\sigma\in\Sigma_n} e^{\theta X_\sigma} \textrm{ for } \theta > 1.
\end{equation}
For all $\theta \geq 1$ and $\varepsilon > 0$ we shall need the estimates
\begin{equation}\label{eq1}
\Prob (S_{n,\theta}>\lambda)\leq C_{\theta,\varepsilon} \lambda^{-(1-\varepsilon)/\theta} \quad {\rm with}\;\; C_{\theta,\varepsilon}\;\; \textrm{independent of}\;\; n\geq 1.
\end{equation}
In the case $\theta > 1$ this estimate is stated explicitly as Proposition 2.1 in \cite{ma11} for general $\xi$ satisfying $\E \xi^3 e^\xi < \infty$. For $\theta = 1$ we make the stronger assumptions \eqref{eq:momentconditions} and $\E e^{-h \xi} < \infty$ for some $h > 0$. In this case, Theorem 1.5 of Hu and Shi (\cite{hushi09}) states that for any $\varepsilon \in (0,1)$ there exists a constant $C_{1,\varepsilon} > 0$ such that
\begin{equation}\label{eq2}
\E (S_{n,1})^{1-\varepsilon}\leq C_{1,\varepsilon}\quad {\rm for\; all}\;\; n\geq 1.
\end{equation}
By Chebyshev's inequality the tail estimate \eqref{eq1} follows immediately.

In the case of a Gaussian $\xi$ we give, in Section \ref{se:generating}, another approach to establishing the estimate \eqref{eq1}, perhaps simpler than the one used in \cite{ma11} and \cite{hushi09}. By using the generating function techniques of \cite{we11} we will study the modulus of continuity of the Laplace transform of $S_{n,\theta}$ at $0$ and prove the following lemma.
\begin{lemma}
\label{le:phimodulus}
Suppose $\xi$ is Gaussian. Let $\theta \geq 1$ and denote the Laplace transform of $S_{n,\theta}$ by
\begin{equation}\label{phin}
\phi_{n,\theta}(t):=\E \exp (-tS_{n,\theta}). 
\end{equation}
Then for all  $\varepsilon >0$ there is a constant $C_{\theta,\varepsilon}>0$, independent of $n$, such that 
\begin{equation}\label{eq3}
1-\phi_{n,\theta}(t)\leq C_{\theta,\varepsilon} t^{(1-\varepsilon)/\theta}\quad {\rm for\; all}\;\;t>0.
\end{equation}
\end{lemma}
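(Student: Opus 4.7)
The plan is to exploit the tree self-similarity of $S_{n,\theta}$ to obtain a functional recursion for $\phi_{n,\theta}$ and then close an induction on $n$, using the Gaussian generating-function techniques of \cite{we11}. Setting $\gamma(1)=1/2$ and $\gamma(\theta)=3\theta/2$ for $\theta>1$, the identity $S_{n,\theta}\eqlaw a_n\bigl(e^{\theta\xi_0}S_{n-1,\theta}^{(0)}+e^{\theta\xi_1}S_{n-1,\theta}^{(1)}\bigr)$ with $a_n=(n/(n-1))^{\gamma(\theta)}$, together with the independence of the two sub-trees, yields
\begin{equation*}
\phi_{n,\theta}(t)=\bigl(\E\,\phi_{n-1,\theta}(a_n t e^{\theta\xi})\bigr)^2,
\end{equation*}
and hence, with $u_n(t):=1-\phi_{n,\theta}(t)$,
\begin{equation*}
u_n(t)=2\,\E\, u_{n-1}(a_n t e^{\theta\xi})-\bigl(\E\, u_{n-1}(a_n t e^{\theta\xi})\bigr)^2.
\end{equation*}

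I would first test the ansatz $u_n(t)\leq C t^{(1-\varepsilon)/\theta}$ in the linearized step: it produces the multiplier $2a_n^{(1-\varepsilon)/\theta}\E e^{(1-\varepsilon)\xi}=a_n^{(1-\varepsilon)/\theta}\cdot 2^{1-\phi(1-\varepsilon)}$. By the critical normalization \eqref{eq:criticalnormalization}, the function $\phi$ attains its unique maximum $1$ at $s=1$ (with $\phi(1)=1$ and $\phi'(1)=0$), so this multiplier is strictly greater than $1$ for every $\varepsilon>0$: the target exponent $(1-\varepsilon)/\theta$ is strictly subcritical with respect to the eigenvalue of the linearization and a step-by-step induction does not close. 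To compensate, following \cite{we11}, I would apply a Cameron--Martin tilt of the Gaussian law of $\xi$ by $e^{\theta\xi}/\E e^{\theta\xi}$, which recasts the iterated recursion as an additive functional of a drift-shifted branching random walk. Crucially, $\prod_{k\le n}a_k=n^{\gamma(\theta)}$ exactly matches the polynomial normalization of $S_{n,\theta}$, so these renormalizing factors are absorbed and do not reappear in the tilted estimate.

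The main obstacle is to close the induction \emph{uniformly in $n$} at an exponent arbitrarily close to the critical value $1/\theta$. The key technical input, which I would import from the generating-function machinery of \cite{we11}, is a Bramson-type ballot estimate for the tilted walk, ensuring that with high probability the tilted particle trajectory stays below a logarithmic barrier; this controls the contribution of a typical particle to $1-\phi_{n,\theta}(t)$ by $t^{(1-\varepsilon)/\theta}$, with constants depending only on $\theta$, $\varepsilon$, and Gaussian moments of $e^{\theta\xi}$ under the tilt. The quadratic subtraction $-\bigl(\E u_{n-1}\bigr)^2$ in the recursion is what allows one to absorb the borderline divergence at the critical exponent, turning the $\varepsilon$-loss into a summable perturbation. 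The step I expect to require the most care is the verification of this uniform-in-$n$ ballot estimate in the discrete-time, binary-tree setting, together with a careful tracking of the dependence of $C_{\theta,\varepsilon}$ on $\varepsilon$ as $\varepsilon\to 0$.
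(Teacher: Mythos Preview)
Your recursion for $\phi_{n,\theta}$ and your diagnosis of why a naive induction at exponent $(1-\varepsilon)/\theta$ fails are both correct. However, what you propose as a cure --- a Cameron--Martin tilt together with a Bramson-type ballot/barrier estimate imported from \cite{we11} --- is not a proof but a research plan, and it is not the route the paper takes. The phrases ``I would import a ballot estimate'' and ``the quadratic subtraction\ldots allows one to absorb the borderline divergence'' are exactly the steps that would need to be carried out, and you have not indicated how.

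The paper's argument avoids ballot estimates entirely by a monotonicity/comparison trick. Reparametrize $\phi_{n,\theta}(e^{\theta x})$ as in \eqref{eq:sophi}--\eqref{eq:recursion}; in that ``$G$-language'' the recursion is the same KPP-type iteration for every $\theta$, and what distinguishes different $\theta$ is only the decay rate of the initial datum at $+\infty$. The maximum principle (Proposition~\ref{max}, essentially Lemma~\ref{compa}) then says that after centering at the respective medians $m_n$, the solution started from a steeper initial condition dominates the one started from a flatter one for $x>0$. Applying this with a \emph{subcritical} comparison parameter $\beta'<1$ gives, for small $t$,
\[
\phi_{\theta,n}(t)\;\ge\;\E\exp\Bigl(-c_n\,t^{\beta'/\theta}\,\frac{Z_{\beta',n}}{\E Z_{\beta',n}}\Bigr)\;\ge\;1-c_n\,t^{\beta'/\theta},
\]
the last step using $e^{-x}\ge 1-x$ together with $\E\bigl(Z_{\beta',n}/\E Z_{\beta',n}\bigr)=1$. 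All the borderline-critical bookkeeping (the $\sqrt n$ or $n^{3\theta/2}$ normalization and the failure of the linearized step) is hidden in the fact, supplied by Theorem~\ref{th:webb}, that the median shifts $m_n$ for the critical/supercritical and the subcritical iterations differ only by a bounded sequence once the correct polynomial prefactor is used. Letting $\beta'\uparrow 1$ yields the claim with exponent $(1-\varepsilon)/\theta$.

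In short: the idea you are missing is that one does not need to close the induction at the target exponent at all. One compares to a subcritical $\beta'<1$, where the bound $1-\phi_{\beta',n}(t)\le Ct$ is immediate from the martingale property, and the maximum principle transfers this across with exponent $\beta'/\theta$. Your barrier approach may be salvageable, but it is substantially harder and you have not supplied the argument.
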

\noindent The uniform estimate (\ref{eq1}) then follows from Chebyshev's inequality and the formula
$$
\E (S_{n,\theta})^{(1-\varepsilon)/\theta}=c_{\theta,\varepsilon} \int_0^\infty t^{-1-\frac{1-\varepsilon}{\theta}}\left(1-\phi_{n,\theta}(t)\right) \d t,
$$
where the explicit expression for the constant is $c_{\theta,\varepsilon} = \frac{\theta}{1-\varepsilon} \Gamma\left(1-\frac{1-\varepsilon}{\theta}\right)$.

The following lemma contains the essential probability estimate needed for the theorems.
\begin{lemma}\label{lemma:modulus-probability-estimate}
Let $\beta \in (0,1]$. In the case $\beta=1$ assume that $\xi$ satisfies \eqref{eq:momentconditions} and $\E e^{-h\xi} < \infty$ for some $h > 0$, and in the case $\beta \in (0,1)$ assume the same properties as in Theorem~\ref{th:submodulus}. Denote $\alpha_1 = 1/2$ and $\alpha_\beta = 3/2$ for $\beta < 1$, and let $\gamma \in (0,\alpha_\beta)$. For every $\varepsilon > 0$ there exists a constant $C_\varepsilon > 0$ depending only on $\beta$ and $\varepsilon$ such that
$$
\Prob\left( \max_{\sigma \in \Sigma_n} \mu_\beta(I_\sigma) \geq 2^{-n\widetilde\phi(\beta)} n^{-\gamma \beta} \right) \leq C_\varepsilon n^{(1-\varepsilon)(\gamma-\alpha_\beta)}.
$$
Especially, $2^{n\widetilde\phi(\beta)} n^{\gamma \beta} \max_{\sigma \in \Sigma_n} \mu_\beta(I_\sigma) \longrightarrow 0$ in probability
as ${n\to\infty}$.
\end{lemma}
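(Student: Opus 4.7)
The strategy is to reduce the event to a tail bound on a single independent copy of $Y_\beta$, and then control the resulting sum using the uniform moment estimates \eqref{eq1}--\eqref{eq2}. Since $(\E Z_{\beta,n})^{-1}=2^{-n\widetilde\phi(\beta)}$ and $\widetilde\phi(1)=0$ (a consequence of \eqref{eq:criticalnormalization}), the scaling identities \eqref{eq:subcscaling} and \eqref{eq:semi-stable} imply that the probability in question equals
$$
\Prob\Big(\max_{\sigma\in\Sigma_n} e^{\beta X_\sigma}Y_\beta^{(\sigma)}\geq n^{-\gamma\beta}\Big).
$$
Conditioning on $\{X_\sigma\}$, using independence of $\{Y_\beta^{(\sigma)}\}$, and applying $1-\prod_\sigma(1-p_\sigma)\leq \sum_\sigma p_\sigma$, I obtain
$$
\Prob(\cdots)\;\leq\;\E\Big(1\wedge \sum_{\sigma\in\Sigma_n} h_\beta\bigl(n^{-\gamma\beta}e^{-\beta X_\sigma}\bigr)\Big),\qquad h_\beta(y):=\Prob(Y_\beta>y).
$$
The two cases of the lemma then differ only in the tail bound on $h_\beta$ used and in which partition-function estimate is invoked.

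For the critical case $\beta=1$, Theorem~\ref{th:tail} together with the trivial bound $h_1\leq 1$ yields $h_1(y)\leq C/y$ uniformly in $y>0$. The inner sum is therefore at most $Cn^{\gamma}\sum_\sigma e^{X_\sigma}=Cn^{\gamma-1/2}S_{n,1}$. Applying the elementary estimate $1\wedge u\leq u^{1-\varepsilon}$ and then \eqref{eq2} gives
$$
\Prob(\cdots)\;\leq\;C^{1-\varepsilon} n^{(\gamma-1/2)(1-\varepsilon)}\,\E S_{n,1}^{1-\varepsilon}\;\leq\; C_\varepsilon \,n^{(\gamma-\alpha_1)(1-\varepsilon)},
$$
which is the claim with $\alpha_1=1/2$.

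For $\beta\in(0,1)$ the exponent $\alpha_\beta=3/2$ will come not from the subcritical cascade at $\beta$ itself, but from an effective \emph{supercritical} partition function at the tilted exponent $\theta:=\beta r$, with $r$ chosen slightly below $q_\beta$. From $\E Y_\beta^r<\infty$ and Markov's inequality one has $h_\beta(y)\leq C_r y^{-r}$, so the inner sum is bounded by $C_r n^{\gamma\theta}\sum_\sigma e^{\theta X_\sigma}=C_r n^{\theta(\gamma-3/2)}S_{n,\theta}$. The key point is that $\theta>1$, i.e.\ $\beta q_\beta>1$. This follows from the normalisation \eqref{eq:criticalnormalization}, which forces $\phi(1)=1$ and $\phi'(1)=0$; by concavity of $\phi$, $\widetilde\phi$ attains its minimum $0$ at $s=1$, and a direct analysis of $F(q):=\widetilde\phi(\beta q)-q\widetilde\phi(\beta)$ shows $F'(1)<0$ and $F<0$ throughout $(1,1/\beta]$, so the non-trivial root $q_\beta$ must satisfy $\beta q_\beta>1$. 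With $\theta>1$ established, applying $1\wedge u\leq u^{(1-\varepsilon)/\theta}$ and using the uniform moment bound $\E S_{n,\theta}^{(1-\varepsilon)/\theta}\leq C_{\theta,\varepsilon}$ (obtained by integrating the tail in \eqref{eq1} with slightly smaller $\varepsilon'<\varepsilon$), the exponents align to yield
$$
\Prob(\cdots)\;\leq\;C_\varepsilon\, n^{(\gamma-3/2)(1-\varepsilon)},
$$
as required.

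For the ``especially'' statement, pick any $\gamma'\in(\gamma,\alpha_\beta)$: the main estimate applied at $\gamma'$ forces $2^{n\widetilde\phi(\beta)}\max_\sigma \mu_\beta(I_\sigma)<n^{-\gamma'\beta}$ with probability tending to one, so $2^{n\widetilde\phi(\beta)}n^{\gamma\beta}\max_\sigma \mu_\beta(I_\sigma)<n^{(\gamma-\gamma')\beta}\to 0$. The main conceptual obstacle is recognising that the subcritical regime must be analysed via a supercritical auxiliary problem—Guivarch-type tail control on $Y_\beta$ transports the estimate to the cascade $S_{n,\beta q_\beta}$—together with the verification $\beta q_\beta>1$ that legitimises the use of \eqref{eq1}; once this is done, the algebra of exponents produces the $3/2$ rate automatically.
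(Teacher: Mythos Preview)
Your argument is correct and follows the same overall strategy as the paper—reduce via \eqref{eq:subcscaling}/\eqref{eq:semi-stable} to an estimate involving the tail of $Y_\beta$ and the partition sums $S_{n,\theta}$—but your tactical execution is different and in fact more direct. The paper bounds the \emph{complementary} probability from below via $1-x\geq e^{-2x}$ on an auxiliary event $\Bcal_{n,\beta}$ (where the tail asymptotic is valid), and then controls both $1-\Prob(\Bcal_{n,\beta})$ and $1-\E\exp(-cS_{n,\beta q_\beta})$ through \eqref{eq1}. You instead bound the probability itself using $1-\prod_\sigma(1-p_\sigma)\le 1\wedge\sum_\sigma p_\sigma$ and then $1\wedge u\le u^{p}$, which eliminates the auxiliary event entirely. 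A second difference is that in the subcritical case you use only the Markov bound $h_\beta(y)\le C_r y^{-r}$ with $r<q_\beta$, rather than Guivarch's exact tail exponent $q_\beta$ used in the paper; this is legitimate because the choice of $\theta=\beta r$ cancels in the final exponent $(1-\varepsilon)(\gamma-3/2)$, so nothing is lost. Your verification that $\beta q_\beta>1$ (hence one can pick $\theta>1$ and invoke \eqref{eq1}) matches the paper's. The ``especially'' clause is handled correctly by passing to $\gamma'\in(\gamma,\alpha_\beta)$; note that the main inequality at $\gamma$ alone only gives $\Prob(\cdot\ge 1)\to 0$, so this extra step is indeed needed.
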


\begin{proof}
Fix $\beta \in (0,1]$, $\alpha_\beta \in \{1/2,3/2\}$, $\gamma \in (0,\alpha_\beta)$ and $\varepsilon > 0$ as in the statement of the lemma. Recall from \eqref{eq:subcscaling} that the distribution of the measure $\mu_\beta$ is given by
$$
\left( \mu_\beta(I_\sigma) \right)_{\sigma \in \Sigma_n} \eqlaw \left( 2^{-n\widetilde\phi(\beta)} e^{\beta X_\sigma} Y_\beta^{(\sigma)} \right)_{\sigma \in \Sigma_n},
$$
where $\{Y_\beta^{(\sigma)}\}$ is an independent collection of copies of the total mass variable $Y_\beta$. Denote $h_\beta(x) = \Prob(Y_\beta \geq x)$. In the case $\beta = 1$ it follows from the theorem of Buraczewski (Theorem \ref{th:tail}) and in the case $\beta < 1$ from our assumptions and the work of Guivarch \cite{gu90} that there exist constants $x_\beta,d_\beta > 0$ such that
\begin{equation}\label{eq:beta-tail-estimates}
h_\beta(x) \leq \frac{d_\beta}{x^{q_\beta}} \leq \frac{1}{2} \quad \textrm{for all } x > x_\beta,
\end{equation}
where $q_1=1$ and $q_\beta$ is defined as in Theorem~\ref{th:submodulus} ($q_\beta$ is necessarily unique due to the strict convexity of $\widetilde \phi$ and the fact that $q=1$ is another solution of $\widetilde\phi (\beta q)-q\widetilde\phi(\beta)=0$). 
Define the events
$$
\Bcal_{n,\beta} = \left\{ \min_{\sigma \in \Sigma_n} n^{-\gamma \beta} e^{-\beta X_\sigma} > x_\beta \right\} = \left\{ \max_{\sigma \in \Sigma_n} e^{\beta X_\sigma} < n^{-\gamma \beta}/x_\beta \right\}.
$$
Since
$$
\Bcal_{n,\beta} \supset \left\{ \sum_{\sigma \in \Sigma_n} e^{\beta  q_\beta X_\sigma} < n^{-\gamma \beta q_\beta}/x_\beta^{q_\beta} \right\},
$$
by the estimate \eqref{eq1} we have (noticing that $\beta q_\beta>1$ since $0\le \widetilde\phi(1)\le \widetilde\phi (q')<\widetilde\phi(\beta )$ for all $q'\in (\beta,1]$)
\begin{equation}\label{eq:bcal-prob-estimate}
1 - \Prob(\Bcal_{n,\beta}) \leq \Prob\left( S_{n,\beta q_\beta} \geq n^{(\alpha_\beta-\gamma)\beta q_\beta}/x_\beta^{q_\beta} \right) \leq C n^{(1-\varepsilon)(\gamma-\alpha_\beta)}
\end{equation}
for some constant $C > 0$ depending on $\beta$ and $\varepsilon$.

Using the estimates \eqref{eq:beta-tail-estimates} we may perform a computation similar to the one used to obtain \eqref{eq:eq} in the proof of Theorem \ref{th:elementary}. The resulting estimate~is
\begin{eqnarray*}
\Prob\left( \max_{\sigma \in \Sigma_n} \mu_\beta(I_\sigma) < 2^{-n\widetilde\phi(\beta)} n^{-\gamma \beta} \right) 
& = & \Prob\left( \max_{\sigma \in \Sigma_n} e^{\beta X_\sigma} Y_\beta^{(\sigma)} < n^{-\gamma \beta} \right) \\
& = & \E \prod_{\sigma \in \Sigma_n} \left( 1 - h_\beta \left( n^{-\gamma \beta} e^{-\beta X_\sigma} \right) \right) \\
& \geq & \E \ind_{\Bcal_{n,\beta}} \exp\left( -2 d_\beta \sum_{\sigma \in \Sigma_n} n^{\gamma \beta q_\beta} e^{\beta q_\beta X_\sigma} \right),
\end{eqnarray*}
which in combination with \eqref{eq:bcal-prob-estimate} and \eqref{eq1} yields
\begin{eqnarray*}
\lefteqn{\Prob\left( \max_{\sigma \in \Sigma_n} \mu_\beta(I_\sigma) \geq 2^{-n\widetilde\phi(\beta)} n^{-\gamma \beta} \right) } \hspace{3.0cm} \\
& \leq & 1 - \Prob(\Bcal_{n,\beta}) + 1 - \E \exp\left( -2 d_\beta \sum_{\sigma \in \Sigma_n} n^{\gamma \beta q_\beta} e^{\beta q_\beta X_\sigma} \right) \\
& = & 1 - \Prob(\Bcal_{n,\beta}) + 1 - \E \exp\left( -2 d_\beta n^{(\gamma-\alpha_\beta)\beta q_\beta} S_{n,\beta q_\beta} \right) \\
& \leq & C' n^{(1-\varepsilon)(\gamma-\alpha_\beta)},
\end{eqnarray*}
where the constant $C' > 0$ depends only on $\beta$ and $\varepsilon$. The proof of the lemma is complete.
\end{proof}

Theorems \ref{th:modulus} and \ref{th:submodulus} now follow easily.

\begin{proof}[Proof of Theorem {\rm \ref{th:submodulus}}.]
Let $\beta \in (0,1)$ and $\gamma \in (0,1/2)$. As it is clearly enough to consider dyadic intervals in \eqref{eq:submodulus}, all we need to do is to improve the convergence in probability in Lemma \ref{lemma:modulus-probability-estimate} to almost sure convergence. But since $\gamma-3/2 < -1$, we may take an $\varepsilon > 0$ small enough that
$$
\sum_{n=1}^\infty \Prob\left( \max_{\sigma \in \Sigma_n} \mu_\beta(I_\sigma) \geq 2^{-n\widetilde\phi(\beta)} n^{-\gamma \beta} \right) \leq C_\varepsilon \sum_{n=1}^\infty n^{(1-\varepsilon)(\gamma-\frac{3}{2})} < \infty.
$$
The Borel--Cantelli lemma now implies the existence of a random constant $C(\omega) < \infty$ such that
$$
\max_{\sigma \in \Sigma_n} \mu_\beta(I_\sigma) < C(\omega) 2^{-n\widetilde\phi(\beta)} n^{-\gamma \beta} \quad \textrm{for all }n=1,2,\dots,
$$
which is the desired conclusion.
\end{proof}

\begin{proof}[Proof of Theorem {\rm \ref{th:modulus}}.]
Let $\gamma \in (0,1/2)$ and choose an integer $\ell \geq 1$ so that $\ell(\gamma-1/2) < -2$. Applying Lemma \ref{lemma:modulus-probability-estimate} with $\varepsilon = 1/2$ gives
$$
\sum_{k=1}^\infty \Prob\left( \max_{\sigma \in \Sigma_{k^\ell}} \mu_1(I_\sigma) \geq (k^\ell)^{-\gamma} \right) \leq C_{1/2} \sum_{k=1}^\infty k^{\ell \frac{\gamma-1/2}{2}} < \infty.
$$
Borel--Cantelli lemma then implies
$$
\max_{\sigma \in \Sigma_{k^\ell}} \mu_1(I_\sigma) \leq C'(\omega) (k^\ell)^{-\gamma} \quad \textrm{for a random constant } C'(\omega) < \infty
$$
for the dyadic intervals of levels $1^\ell, 2^\ell, 3^\ell,\dots$. It remains to note that the sequence of maxima $(\max_{\sigma\in\Sigma_n} \mu_1(I_\sigma))_{n=1}^\infty$ is decreasing, so for $k^\ell \leq n < (k+1)^\ell$ we have
$$
\max_{\sigma\in\Sigma_n} \mu_1(I_\sigma) \leq \max_{\sigma\in\Sigma_{k^\ell}} \mu_1(I_\sigma) \leq C'(\omega) k^{-\ell \gamma} < C'(\omega) 2^{\ell \gamma} n^{-\gamma}.
$$
This shows that the estimate \eqref{eq:modulus} indeed holds. Finally, the statement that one
cannot take $\gamma > 1/2$ in the result is an immediate consequence of the divergence result \eqref{eq:raja2} in Theorem \ref{th:elementary}.
\end{proof}

\section{KPZ relations}\label{se:kpz}
\subsection{The KPZ formula associated with $\mu_1$}
 In the case where $\xi$ is Gaussian the relation  stated by the Theorem \ref{th:BS} is precisely the KPZ formula predicted by physicists working in quantum gravity. Benjamini and Schramm \cite{BS09} proved Theorem \ref{th:BS} for the random metrics defined by the one-dimensional Mandelbrot cascade measures,
 i.e. in the subcritical case
 \begin{equation}\label{eq:subcriticalnormalization}
\E e^\xi = \frac{1}{2} \quad {\rm and} \quad \E \, \xi e^\xi < 0,
\end{equation} 
which basically corresponds to the measures
$\mu_\beta$ for $\beta <1$ considered in Section \ref{se:defres}. They were inspired by a different point of view developed by Duplantier and Sheffield \cite{DS08}, who  gave a sense to the KPZ formula in terms of expected box counting dimension in the context of Liouville quantum gravity,  by considering random measures associated with the Gaussian free field. 

In addition, Rhodes and Vargas \cite{rhovar08} derived in dimension 1 a relation similar to \eqref{KPZ}  between Hausdorff dimensions when comparing Euclidean geometry and the geometry given by the random metric associated with the limit measure of a non-degenerate infinitely divisible cascade. In higher dimension they obtained such a formula  by using the Lebesgue measure and the random measure, not as metric, but as functions of balls to define Hausdorff measures and dimension  (notice that in the log-Gaussian case, the multiplicative chaos of \cite{rhovar08} and the measures considered in \cite{DS08} are closely related).   

Before starting the proof of Theorem \ref{th:BS}, let us first comment on the difference between our assumptions on moments of negative orders of $e^\xi$ and those made in \cite{BS09} in the subcritical case. If we denote by $\widetilde{\mu}_1$ the measure $\widetilde\mu_1=2e^{\widetilde \xi}\mu_1$, where $\widetilde\xi$ is a copy of $\xi$ independent of $\mu_1$, then $\widetilde Y_1=\|\widetilde \mu_1\|_{{\rm TV}}$ satisfies the same functional equation as the total mass of the Mandelbrot measure considered in  \cite{BS09}. Then it follows from  \cite{BS09} that for $s>0$ we have $\E \widetilde Y_1^{-s} <\infty$ if and only if $\phi(-s)>-\infty$, and for the proof of KPZ formulas one needs $\E \widetilde Y_1^{-s} <\infty$ for all $s\in (0,1)$. Due to our definition of $\mu_1$,  setting $Y_1=\| \mu_1\|_{{\rm TV}}$ 
we have  \cite[Theorem 4(a)]{Molchan96} 
\begin{equation}\label{eq:negmoments}
\mathbb{E}Y_1^{-s}<\infty\ (\forall\, s\in (0,1)) \quad \textrm{as soon as }\;\phi(-s/2)>-\infty \ (\forall\, s\in (0,1/2))
\end{equation}
where $\phi (s)=-\log_2\E (e^{s\xi})$.
Moreover,
$\E (Y_1^s)<\infty$ for $s\in (0,1)$, see \cite[Theorem 4 and 5]{bigky05}.
  
\begin{proof}[Proof of Theorem \ref{th:BS}] 
After the above observations, the proof of the KPZ formula established in \cite{BS09} can be mimicked, up to changes imposed by the fact that $\E(Y_1^s)<\infty$ for $s\in (0,1)$ but not for $s=1$, and  that we make precise below. 

Employing the notation of \cite{BS09}, let us denote by $\ell$ the total mass $Y_1$ of  $\mu_1$. We use the following form of Lemma 3.3 in \cite{BS09}: Let $x,y\in [0,1]$ and let $s\in (0,1)$. Then $\E(\rho_1(x,y)^s)\le 8|x-y|^{\phi(s)}\E(\ell^s)$.

For  the upper bound for the Hausdorff dimension, which corresponds to \cite[Theorem 3.4]{BS09}, due to the above analogue of \cite[Lemma 3.3]{BS09} the proof is the same  in case  $\zeta_0<1$. In turn, the case $\zeta_0=1$ is trivial.

For the lower bound, using the same notations as in \cite{BS09}, we fix a non-empty Borel set $K$ such that $\zeta_0>0$ and $t\in (0,\zeta_0)$. We set $s=\phi^{-1}(t)$, denote by $\nu_0$ a positive Borel measure  carried by $K$ and such that $\mathcal E_t(\nu_0)=\int\int\frac{\nu_0(dx)\nu_0(dy)}{|y-x|^t}<\infty$, and consider the sequence of measures  $(\nu_n)_{n\ge 1}$ whose densities with respect to $\nu_0$ are given by $e^{s X_\sigma}/(\E e^{s\xi})^{|\sigma|}$ over each interval $I_\sigma$. Here $|\sigma|=n$ for $\sigma\in\Sigma_n$. 

Also, we consider  $\rho_{1,n}(x,y)=\max (\rho_1(x,y),\mu_1(I_n(x)),\mu_1(I_n(y)))$ for all $x,y\in [0,1]$, where $I_n(x)$ stands for the closure of  the semi-open to the left dyadic interval of generation $n$ containing $x$, and $[1-2^{-n},1]$ if $x=1$. Then, following \cite[Theorem 3.5]{BS09} proof, we get 
$$
\E\Big (\mathcal E_s(\nu_n,\rho_{1,n})=\int\int\frac{\nu_n(dx)\nu_n(dy)}{\rho_{1,n}(x,y)^s}\Big )= O(1) \mathcal E_t(\nu_0).
$$

 Then, noting that $\nu_n([0,1])^2\ell^{-s}\le \mathcal E_s(\nu_n,\rho_{1,n})$, at the end of the proof of \cite[Theorem 3.5]{BS09},  H\"older's inequality must be applied as follows, with $h\in (1+s,2)$:
\begin{align*}
&\E\Big[ \nu_n([0,1])^{h/(1+s)}\Big]= \E\Big [\Big (\nu_n([0,1])^{h/1+s}\ell^{-hs/2(1+s)}\Big )\ell^{+hs/2(1+s)}\Big ]\\
 \le\; & \E\Big [\nu_n([0,1])^{h}\ell^{-hs/2}\Big ]^{1/1+s}\E(\ell^{h/2})^{s/1+s}
\le \; \E\Big [\nu_n([0,1])^{2}\ell^{-s}\Big ]^{h/2(1+s)}\E(\ell^{h/2})^{s/1+s}\\
=\; & O(1) (\mathcal E_t(\nu_0))^{h/2(1+s)}\E(\ell^{h/2})^{s/1+s},
\end{align*} 
which is finite because $\mathcal E_t(\nu_0)<\infty$ and  $\E(\ell^{h/2})^{s/1+s}<\infty$ since $h/2 < 1$. Finally, the martingale $\nu_n([0,1])$ is bounded in $L^{h/(1+s)}$, so it is uniformly integrable since $h/(1+s)>1$, and $\nu_n$ converges weakly  almost surely to a nondegenerate measure $\nu$, necessarily supported on $K$, and such that $\int\int\frac{\nu(dx)\nu(dy)}{\rho_1(x,y)^s}<\infty$, which implies that the lower Hausdorff dimension of $\nu$ with respect to $\rho_1$ is at least $s$.  Thus, the Hausdorff dimension of $K$ is almost surely  at least $s$ for all $s<\zeta$, hence the conclusion.  
\end{proof} 

\subsection{The KPZ formula associated with $\mu_\beta$}\label{subse:KPZagain}

Assume that $\alpha\in (0,1)$ and let $L_\alpha$ be a stable subordinator of index $\alpha$ independent of the $\sigma$-algebra generated by $\{\xi_\sigma: \sigma\in\Sigma\}$. We recall that  up to a multiplicative constant  $\mu_\beta$ is the measure obtained as the derivative of the function $L_{1/\beta}\circ F_{\mu_1}$ on $[0,1]$
where $F_{\mu_1}(x)=\mu_1([0,x])$.
It is also of interest to consider  the measures obtained in the same way from a subcritical cascade measure $\mu$. Altogether these measures unify stable L\'evy subordinators and Mandelbrot measures in a natural class of generalized semi-stable processes which satisfy scaling properties similar to \eqref{eq:semi-stable}.


 Let us fix the conventions used in this subsection and in Section \ref{se:multifractal} below:
\begin{itemize}

 \item $\mu$ stands for the  Mandelbrot cascade measure generated by the variable $\xi$ in the subcritical case \eqref{eq:subcriticalnormalization}, so that  $\E \xi e^\xi<0$. Then we denote by  $\nu_\alpha$   the measure obtained as the derivative of 
$L_\alpha\circ F_\mu$. 

  \item As before $\mu_1$ stands for a critical Mandelbrot measure, whence $\E \xi e^\xi=0$.
We denote  by $\nu_{\alpha,1}$  the measure obtained as the derivative of  $L_{\alpha}\circ F_{\mu_1}$. 
Thus, up to a positive multiplicative constant,  we have $\mu_\beta=\nu_{1/\beta,1}$ for
$\beta >1$.

\end{itemize}

A natural way to extend the usual notion of box-counting and Hausdorff dimension is to replace the metric by a continuous measure \cite[p. 141]{Billingsley}. This is what was used in \cite{DS08} and \cite{rhovar08} respectively, to get the KPZ relations in dimension $\ge 2$. Thus, if $\nu$ is a positive continuous Borel measure supported on $[0,1]$, we can define for $s\ge 0$ and any subset $E$ of $[0,1]$
$$
H^s_\nu(E)=\lim_{\delta\to 0} \inf\Big\{\sum_{i\ge 1}\nu(I_i)^s: E\subset \bigcup_{i\ge 1}I_i,\ I_i\text{ interval of length }\le \delta\Big\},
$$
and the Hausdorff dimension of $E$ relative to $\nu$ as 
\begin{equation}\label{dimnu}
\dim_\nu (E)=\sup\{s\ge 0: H^s_\nu(E)=\infty\}=\inf\{s\ge 0: H^s_\nu(E)=0\}.
\end{equation}
Note that since we are in dimension 1, this definition equals the definition of the Hausdorff dimension in the metric 
$$
\rho_\nu(x,y):=\nu([x,y]).
$$ 
If the measure $\nu$ is not continuous, it is easy to check that \eqref{dimnu} is still defined if $\nu (E)=0$. Thus we can seek for analogues of the KPZ formula invoking Hausdorff dimensions relative to $\nu_\alpha$ or $\nu_{\alpha,1}$. This will use the following lemma.

\begin{lemma} \label{nullmeas}
\begin{enumerate}
\item If $E$ is a Borel set of null Lebesgue measure, then $\mu (E)=0$ and $\nu_\alpha(E)=0$ a.s.
\item Suppose that $\E \, \xi^2 e^\xi<\infty$. If $E$ is a Borel set of Euclidean Hausdorff dimension less than 1, then $\mu_1(E)=0$ and $\nu_{\alpha,1}(E)=0$ a.s. 
\end{enumerate}
\end{lemma}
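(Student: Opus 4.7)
The plan is to handle both parts by combining a moment bound on the cascade measure of small intervals with the observation that, conditional on the cascade, the independent subordinator $L_\alpha$ almost surely places no jumps inside any fixed Lebesgue-null set.

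For part (1), the subcritical martingale $(\mu_n(I_\sigma))_{n}$ is uniformly integrable under \eqref{eq:subcriticalnormalization}, so $\E \mu(I_\sigma) = 2^{-|\sigma|} = |I_\sigma|$ for every dyadic interval; a Dynkin class argument then extends this to $\E \mu(E) = |E|$ for every Borel set, which immediately yields $\mu(E) = 0$ a.s.\ whenever $|E| = 0$. For $\nu_\alpha$, the Kahane--Peyri\`ere theorem together with $\mu(I_\sigma) > 0$ a.s.\ ensures that $\mu$ is non-atomic and of full topological support, so $F_\mu$ is a homeomorphism of $[0,1]$ onto $[0,Y_\mu]$. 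A standard change of variables then identifies $d(L_\alpha \circ F_\mu)$ with the pullback of $dL_\alpha$ and gives $\nu_\alpha(E) = dL_\alpha(F_\mu(E))$; since the pushforward of $\mu$ by $F_\mu$ is Lebesgue on $[0,Y_\mu]$, we also have $|F_\mu(E)| = \mu(E) = 0$ a.s. Conditional on $\mu$, the jumps of $L_\alpha$ form a Poisson point process whose time-marginal is proportional to Lebesgue (restricted to each stratum of jumps of size $\ge \epsilon$), so these jumps a.s.\ avoid the fixed Lebesgue-null set $F_\mu(E)$; integrating out $\mu$ gives $\nu_\alpha(E) = 0$ a.s.

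For part (2), the identity $\E \mu_1 = \mathrm{Leb}$ is unavailable because $\E Y_1 = \infty$, so I would instead use fractional moments. The scaling relation \eqref{eq:semi-stable} at $\beta = 1$ and the independence of $X_\sigma$ from $Y_1^{(\sigma)}$ give, for $0 < q < 1$,
$$
\E \mu_1(I_\sigma)^q \;=\; \E e^{qX_\sigma}\cdot \E Y_1^q \;=\; |I_\sigma|^{\phi(q)}\, \E Y_1^q,
$$
where $\E Y_1^q < \infty$ under the assumption $\E \xi^2 e^\xi < \infty$ by the result of \cite{bigky05} recalled just before the lemma statement. Given $E$ with $\dim_H E < 1$, pick any $s'$ and $q$ with $\dim_H E < s' < q < 1$; concavity of $\phi$ together with $\phi(0) = 0$ and $\phi(1) = 1$ forces $\phi(q) \ge q > s'$. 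For any $\eta > 0$, the definition of Hausdorff dimension furnishes a (dyadic) cover $\{I_i\}$ of $E$ with $\sum_i |I_i|^{s'} < \eta$, and subadditivity of $x \mapsto x^q$ yields
$$
\E \mu_1(E)^q \;\le\; \sum_i \E \mu_1(I_i)^q \;\le\; \E Y_1^q \sum_i |I_i|^{\phi(q)} \;\le\; \E Y_1^q \cdot \eta,
$$
using $|I_i|^{\phi(q)} \le |I_i|^{s'}$. Letting $\eta \to 0$ gives $\mu_1(E) = 0$ a.s., and the claim $\nu_{\alpha,1}(E) = 0$ a.s.\ follows verbatim from the argument of part~(1), with Corollary~\ref{co:noatoms} supplying the non-atomicity of $\mu_1$.

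The only input drawn from outside the paper is the finiteness of $\E Y_1^q$ for $q<1$, which is cited just before the lemma; the rest is a moment bound tuned to the covering scale of $E$ together with the independence of $L_\alpha$. I foresee no serious obstacle, although the measure-theoretic identifications $\mu(E) = |F_\mu(E)|$ and $\nu_\alpha(E) = dL_\alpha(F_\mu(E))$ deserve care, as they rely on $F_\mu$ being a genuine homeomorphism (which is why non-atomicity and full support enter the argument).
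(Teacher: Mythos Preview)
Your moment argument for $\mu_1(E)=0$ in part~(2) is essentially identical to the paper's: both compute $\E\,\mu_1(I_\sigma)^q=(\E e^{q\xi})^{|\sigma|}\,\E Y_1^q$, invoke $\E e^{q\xi}\le (\E e^\xi)^q=2^{-q}$ (i.e.\ $\phi(q)\ge q$), and sum over a cover with small $s'$--content. Your first--moment shortcut $\E\mu(E)=|E|$ in part~(1) is a nice simplification over the fractional--moment version the paper would use there.

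Where you genuinely diverge from the paper is in handling $\nu_\alpha$ and $\nu_{\alpha,1}$. The paper does not pass through $F_\mu$ or the jump structure of $L_\alpha$ at all; instead it uses the scaling identity $\nu_{\alpha,1}(I_i)\stackrel{d}{=}\mu_1(I_i)^{1/\alpha}Z_i$ with $Z_i$ i.i.d.\ positive $\alpha$--stable and bounds $\E\big[(\sum_i\nu_{\alpha,1}(I_i))^{t\alpha}\big]$ directly by $\E(Y_1^t)\,\E(Z^{t\alpha})\sum_i|I_i|^t$, then lets the cover shrink along a summable sequence. Your route via ``Poisson jump times avoid Lebesgue--null sets'' is equally valid and arguably more conceptual, but it costs you one hypothesis: as written you appeal to Corollary~\ref{co:noatoms} for the non-atomicity of $\mu_1$, and that corollary is proved in the paper only under \eqref{eq:momentconditions}, whereas Lemma~\ref{nullmeas}(2) is stated under the weaker $\E\,\xi^2e^\xi<\infty$. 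The paper's direct moment computation sidesteps this entirely. Your argument can be repaired without non-atomicity by working with the generalized inverse $G=F_{\mu_1}^{-1}$: one has $\nu_{\alpha,1}=G_*(dL_\alpha)$ and $|G^{-1}(E)|=G_*(\mathrm{Leb})(E)=\mu_1(E)=0$, after which the Poisson argument goes through; but you should make this explicit rather than lean on the homeomorphism claim.
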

\begin{proof} We only prove (2) since (1) is similar and slightly simpler.   Let $1>t>\dim K$, $\epsilon>0$, and  consider a covering of $E$ by a collection $(I_i)_{i\ge 1}$ of dyadic subintervals of $[0,1]$ such that $\sum_{i\ge 1}|I_i|^t\le \epsilon$.

The random variable $\sum_{i\ge 1} \nu_{\alpha,1}(I_i)$ is equal in distribution to $\sum_{i\ge 1}\mu_{1}(I_i)^{1/\alpha} Z_i$, where the random variables $Z_i$ are identically distributed with a  positive $\alpha$-stable random variable $Z$, and independent of $\mu_1$. It follows that 
\begin{align*}
\E\Big[\Big ( \sum_{i\ge 1} \nu_{\alpha,1}(I_i)\Big )^{t\alpha}\Big )\Big ]&\le \sum_{i\ge 1}\mathbb{E}(\mu_{1}(I_i)^{t}) \E (Z^{t\alpha})\\
&=\sum_{i\ge 1}(\E e^{t\xi})^{-\log_2|I_i|}\E (Y_1^t)\E (Z^{t\alpha})\\
&\le \sum_{i\ge 1}(\E e^{\xi})^{-t\log_2|I_i|}\E (Y_1^t)\E (Z^{t\alpha})\\
&=\E (Y_1^t)\E (Z^{t\alpha})\sum_{i\ge 1}|I_i|^t\le \E (Y_1^t)\E (Z^{t\alpha})\epsilon.
\end{align*}
Taking $\epsilon=2^{-n}$ we get a deterministic sequence of coverings $(\bigcup_{i\ge 1}I_i^{n})_{n\ge 1}$ of $E$ such that a.s. $\sum_{n\ge 1}\Big (\sum_{i\ge 1} \nu_{\alpha,1}(I^n_i)\Big )^{t\alpha}<\infty$, hence $\lim_{n\to\infty}\sum_{i\ge 1} \nu_{\alpha,1}(I^n_i)=0$. 

A similar calculation shows that $\mu_1(E)=0$ a.s. 
\end{proof}

We can now state a result regarding the KPZ relations associated with $\nu_\alpha$ or $\nu_{\alpha,1}$. 

\begin{theorem}\label{KPZalpha} Let $\alpha\in (0,1)$. Suppose that the variable $\xi$ satisfies $\phi(-s)>-\infty$ for all $s\in (0,1/2)$, where $\phi$ is defined as before by \eqref{phi}. Let $K\subset[0,1]$ be some (deterministic) nonempty Borel set and  let $\zeta_0$ denote its Hausdorff dimension with respect to the Euclidean metric.
\begin{enumerate}
\item Suppose that $K$ has  Lebesgue measure $0$. Let $\zeta$ denote the Hausdorff dimension of $K$ with respect to the random metric $\rho_\mu$ and $\zeta_\alpha$ its Hausdorff dimension relative to $\nu_\alpha$. Then a.s. $\zeta_\alpha$  is the unique solution of the equation 
$$
\zeta_0=\phi(\zeta_\alpha/\alpha)
$$
in $[0,\alpha]$, i.e. $\zeta_\alpha=\alpha \zeta$.

\medskip

\item Suppose that $\E \, \xi^2 e^\xi<\infty$ and $\zeta_0<1$.  Let $\zeta$ denote the Hausdorff dimension of $K$ with respect to the random metric $\rho_{\mu_1}$ and $\zeta_\alpha$ its Hausdorff dimension relative to ${\nu_{\alpha,1}}$. The same conclusion as in part {\rm (1)} holds.
\end{enumerate}
\end{theorem}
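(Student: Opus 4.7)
The plan is to invoke the KPZ identities already available for $\mu$ (Benjamini--Schramm \cite{BS09}) in case (1) and for $\mu_1$ (Theorem \ref{th:BS}) in case (2), and to transfer them to $\nu_\alpha$ and $\nu_{\alpha,1}$ via the scaling of $L_\alpha$. Since $\zeta$ already satisfies $\zeta_0=\phi(\zeta)$ by those theorems, the statement reduces to proving the comparison $\dim_{\nu_\alpha}(K)=\alpha\,\dim_{\rho_\mu}(K)$, and similarly in case (2) with $\mu_1,\nu_{\alpha,1}$. The central ingredient is the conditional distributional identity
\begin{equation}\label{eq:plan-scaling}
\big(\nu_\alpha(I)\mid \mu\big)\;\eqlaw\;\mu(I)^{1/\alpha}\,Z,\qquad I\subset[0,1],
\end{equation}
where $Z=L_\alpha(1)$ is a positive $\alpha$-stable random variable independent of the cascade; furthermore, the copies of $Z$ attached to pairwise disjoint intervals are conditionally independent given $\mu$. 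This follows from the stationarity, self-similarity and independent increments of $L_\alpha$. We shall use freely that $\E(Z^s)<\infty$ for every $s<\alpha$ and $\E(Z^{-s})<\infty$ for every $s>0$.

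For the upper bound, fix $s\in(\alpha\zeta,\alpha)$ so that $s/\alpha>\zeta$, whence $H^{s/\alpha}_\mu(K)=0$ a.s., and for every $\epsilon,\delta>0$ we may pick a dyadic cover $(I_i)$ of $K$ with $|I_i|\le\delta$ and $\sum_i\mu(I_i)^{s/\alpha}<\epsilon$. Conditioning on $\mu$ and applying \eqref{eq:plan-scaling} gives
$$
\E\Big[\sum_i\nu_\alpha(I_i)^s\,\Big|\,\mu\Big]=\E(Z^s)\sum_i\mu(I_i)^{s/\alpha}<\E(Z^s)\,\epsilon,
$$
and a Fatou / Borel--Cantelli argument along a mesh $\delta_n\downarrow 0$ yields $H^s_{\nu_\alpha}(K)=0$ a.s.; letting $s\downarrow\alpha\zeta$ produces $\dim_{\nu_\alpha}(K)\le\alpha\zeta$. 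For the lower bound, fix $s\in(0,\alpha\zeta)$. The energy argument in the proof of Theorem \ref{th:BS} (respectively the Benjamini--Schramm proof) produces, almost surely, a nonzero Borel measure $\sigma$ supported on $K$ with $\iint\rho_\mu(x,y)^{-s/\alpha}\,d\sigma(x)\,d\sigma(y)<\infty$. Conditioning on $\mu$ and $\sigma$, the identity \eqref{eq:plan-scaling} combined with $\E(Z^{-s})<\infty$ gives
$$
\E\!\left[\iint\rho_{\nu_\alpha}(x,y)^{-s}\,d\sigma(x)\,d\sigma(y)\,\Big|\,\mu,\sigma\right]=\E(Z^{-s})\iint\rho_\mu(x,y)^{-s/\alpha}\,d\sigma(x)\,d\sigma(y)<\infty,
$$
so $\sigma$ has a.s.\ finite $s$-energy with respect to $\rho_{\nu_\alpha}$, and the Frostman energy--dimension inequality yields $\dim_{\nu_\alpha}(K)\ge s$, hence $\ge\alpha\zeta$.

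The main technical obstacle will be the adaptation of the classical Frostman energy--dimension inequality to the pseudometric gap function $\rho_{\nu_\alpha}(x,y)=\nu_\alpha([x,y])$ featured in the definition \eqref{dimnu}, since $\nu_\alpha$ and $\nu_{\alpha,1}$ are purely atomic (as derivatives of pure-jump subordinators composed with continuous cumulative distribution functions). Lemma \ref{nullmeas} guarantees that $K$ is almost surely disjoint from the countable set of atoms of $\nu_\alpha$ (resp.\ $\nu_{\alpha,1}$), so the approach will be to restrict all coverings of $K$ to dyadic intervals, for which \eqref{eq:plan-scaling} applies term by term with conditionally independent stable weights, and to approximate $\sigma$ by its dyadic cylinder restrictions, thereby transferring the standard energy argument to this non-metric setting.
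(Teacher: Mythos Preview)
Your argument is correct in outline but takes a substantially longer route than the paper. You compare $\dim_{\nu_\alpha}$ and $\dim_{\mu}$ directly via the stable scaling $\nu_\alpha(I)\mid\mu\eqlaw\mu(I)^{1/\alpha}Z$: the upper bound averages a good $\mu$-cover over $L_\alpha$, and the lower bound transports the Frostman energy measure $\sigma$ built in \cite{BS09} (or in the proof of Theorem~\ref{th:BS}) from the $\mu$-gauge to the $\nu_\alpha$-gauge using Fubini and $\E(Z^{-s})<\infty$. The Frostman step you flag as an obstacle is in fact harmless: once one passes to a disjoint interval cover (permissible with no loss since $s<\alpha<1$ gives $(a+b)^s\le a^s+b^s$), the standard inequality $\sigma(I_i)^2\le\nu_\alpha(I_i)^s\iint_{I_i\times I_i}\rho_{\nu_\alpha}(x,y)^{-s}\,d\sigma(x)\,d\sigma(y)$ combined with Cauchy--Schwarz yields $\sum_i\nu_\alpha(I_i)^s\ge\sigma(K)^2/\mathcal E_s(\sigma,\rho_{\nu_\alpha})$, regardless of whether $\nu_\alpha$ is atomic.

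The paper dispatches the matter in three lines by a change of viewpoint. Since the $\nu$-``diameter'' of an interval $I$ is the Euclidean length of $F_\nu(I)$, one has $\dim_\nu K=\dim F_\nu(K)$ in the ordinary Euclidean sense; hence $\zeta=\dim F_\mu(K)$ and $\zeta_\alpha=\dim L_\alpha(F_\mu(K))$. The identity $\zeta_\alpha=\alpha\zeta$ is then an immediate consequence of the uniform dimension theorem for stable subordinators \cite[III.5]{Bertoin96}: almost surely $\dim L_\alpha(E)=\alpha\dim E$ simultaneously for \emph{every} $E\subset\R_+$. No separate upper/lower bound analysis, no energy measure, and no adaptation of Frostman to a pseudometric are needed. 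Your method has the merit of being self-contained within the cascade/potential-theory framework and of making explicit which moments of $Z$ are used, but the image-and-subordinator argument is considerably more economical.
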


An analogue to Theorem~\ref{KPZalpha}.1 was first proved in \cite{bajinrhovar12} in the context of non-degenerate Kahane Gaussian multiplicative chaos, and it gave a rigorous mathematical justification to the so called dual KPZ formula. 

\begin{proof}
Note that computing $\zeta$, the Hausdorff dimension of $K$ relative to $\mu$ (resp $\mu_1$) amounts to computing the Euclidean Hausdorff dimension of the image $K_\mu$ (resp. $K_{\mu_1}$) of $K$ by $F_\mu$ (resp.  $F_{\mu_1}$). Moreover, computing $\zeta_\alpha$, the Hausdorff dimension of $K$ relative to $\nu_\alpha$ (resp. $\nu_{\alpha,1}$) amounts to computing the Euclidean Hausdorff dimension of $L_\alpha(K_\mu)$ (resp. $L_\alpha(K_{\mu_1}))$. Now we can use the fact that a.s. $L_\alpha(E)=\alpha\dim E$ for all subsets $E$ of $[0,1]$ \cite[III.5]{Bertoin96} to conclude that $\zeta_\alpha=\alpha \zeta$.
\end{proof}

One may observe that in some sense the effect of combining  the L\'evy process  $L_\alpha$ with the measure $\mu_1$ can be thought as a  'random snowflaking' (with exponent $\alpha$) of the random metric induced by $\nu_\alpha$, see \cite{He}.

\section{Multifractal nature of the measures $\mu_\beta$, $\beta\ge 1$}\label{se:multifractal}

The knowledge of the continuity of the measure $\mu_1$ has some consequences in the multifractal analysis of Mandelbrot measures and the multifractal analysis of L\'evy processes in multifractal time.
\medskip

Recall that given a positive Borel measure $\nu$ supported on a compact metric space $(X,d)$, its multifractal analysis consists in computing the Hausdorff dimension of the level sets of the pointwise H\"older exponent of $\nu$, namely the sets
$$
E_\nu(\gamma)=\Big\{ x\in X: \liminf_{r\to 0^+}\frac{\log \nu(B(x,r))}{\log (r)}=\gamma\Big \} \quad (\gamma\in [0,\infty]).
$$ 
Throughout, we adopt the convention that a set has a negative dimension if and only if it is empty. 

Now let $\mu$ (resp. $\mu_1$) be the subcritical (resp. critical) measure considered in  section \ref{subse:KPZagain} and  set  $\tau(s):=\phi(s)-1=-\widetilde\phi(s)$. Under suitable assumptions, the multifractal nature of the Mandelbrot measure $\mu$ has been studied in \cite{HoWa,Molchan96}, in which it is shown that  for each $\gamma$ such that $\tau^*(\gamma)>0$ one has almost surely  $\dim\, E_\mu(\gamma)=\tau^*(\gamma)$ (in these papers the pointwise H\"older exponent is associated with the dyadic intervals rather than centered balls). 
Here $\tau^*$ stands for the Legendre transform
$$
\tau^*(\gamma):=\inf_{t\in\R} \big(t\gamma-\tau(t)\big).
$$
This result is strengthened in \cite{Ba00}: almost surely  (simultaneously) for all $\gamma$ such  that $\tau^*(\gamma)>0$ it holds that $\dim\, E_\mu(\gamma)=\tau^*(\gamma)$.

Moreover, in \cite{Ba00} the question for the at most two values of $\gamma$ for which $\tau^*(\gamma)=0$ is partially solved. Here one employs the fact that in this case  $\gamma$ takes the form $\tau'(s_0)$, and one verifies that  $E_\mu(\tau'(s_0))$ carries a piece of  a critical measure of $\widetilde\mu_{1}$, where $\widetilde \mu_{1}$ is constructed by using instead the  normalized variable $e^{\widetilde \xi}:=\frac{e^{s_0\xi}}{2\E e^{s_0\xi}}$. In particular, this set  is nonempty, but its  Hausdorff dimension equals $0$. 
The fact that we now know that $\widetilde\mu_{1}$ is atomless makes it possible to strengthen the above result: $E_\mu(\tau'(s_0))$ is not countable. 

 The results of \cite{Ba00} hold also for the critical measures $\mu_1$. Then in \cite{BaJin10} in which limit of complex multiplicative cascades are studied, a complete answer was given for the multifractal behaviour of $\mu$, and
 because we now know that $\mu_1$ is atomless they extend easily  to $\mu_1$, and we may state the following result without proof. 
\begin{theorem}
Let $\nu=\mu$ or $\nu=\mu_1$ according to $\E \, e^{\xi} = \frac{1}{2}$ and $ \E \, \xi e^\xi <0$ or $\E \, e^{\xi} = \frac{1}{2}$ and $ \E \, \xi e^\xi =0$. Suppose that $\phi(s)>-\infty$ for all $s\in\R$ if $\nu=\mu$, and $\phi(s)>-\infty$ in a neighborhood of $(-\infty,1]$ if $\nu=\mu_1$. With probability 1, for all $\gamma\in[0,\infty]$, $E_\nu(\gamma)\neq\emptyset$ if and only if $\gamma$ belongs to the compact interval $I=\{\gamma:\tau^*(\gamma)=\inf_{s\in\R}(s\gamma-\tau(s))\ge 0\}$, and in this case $\dim E_\nu(\gamma)=\tau^*(\gamma)$. Moreover, $\min (I)=0$ if and only if $\nu=\mu_1$.  
\end{theorem}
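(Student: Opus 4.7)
The plan is to carry over the multifractal analysis developed by Barral \cite{Ba00} and Barral--Jin \cite{BaJin10} in the subcritical case, using Corollary \ref{co:noatoms} as the key new input needed to handle the boundary of the admissible interval $I$ in the critical case $\nu=\mu_1$. I would organise the argument in three steps: upper bound, lower bound on the interior of $I$, and the delicate boundary step.

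For the upper bound, I would show that almost surely, for every $\gamma$, $\dim E_\nu(\gamma)\le\tau^*(\gamma)$ with $E_\nu(\gamma)=\emptyset$ whenever $\tau^*(\gamma)<0$. Starting from the scaling relations \eqref{eq:subcscaling} (if $\nu=\mu$) and \eqref{eq:semi-stable} (if $\nu=\mu_1$), together with the moment estimates $\mathbb E[\mu(I_\sigma)^q]\asymp 2^{-n(\tau(q)+1)}$ (and the corresponding estimates $\mathbb E[\mu_1(I_\sigma)^q]<\infty$ for $q\in(0,1)$ given by \cite[Theorems 4--5]{bigky05}), a Chebyshev argument applied to $\{x:\nu(I_n(x))\ge 2^{-n\gamma'}\}$ for each rational $\gamma'$ together with Borel--Cantelli produces, for each $q$ in the admissible range of moments, an a.s.\ Hausdorff dimension bound. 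Taking a countable intersection over rational parameters yields the simultaneous bound for all $\gamma$. This part is insensitive to whether $\nu=\mu$ or $\nu=\mu_1$.

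For the lower bound in the interior of $I$, fix $\gamma_0$ with $\tau^*(\gamma_0)>0$ and let $s_0$ be the unique $s$ with $\tau'(s)=\gamma_0$ and $s\tau'(s)-\tau(s)>0$. I would then introduce the tilted generator $\widetilde\xi$ with $e^{\widetilde\xi}=e^{s_0\xi}/(2\mathbb E e^{s_0\xi})$ and the associated (subcritical, hence non-degenerate) Mandelbrot cascade $\widetilde\mu$, and apply the Peyri\`ere formalism to show that $\widetilde\mu$-a.e. $x$ has local $\nu$-exponent $\gamma_0$. This gives $\widetilde\mu(E_\nu(\gamma_0))>0$, and the Billingsley lemma yields $\dim E_\nu(\gamma_0)\ge\dim\widetilde\mu=\tau^*(\gamma_0)$. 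The argument from \cite{Ba00, BaJin10} transports verbatim, both for $\nu=\mu$ and $\nu=\mu_1$, since what matters is that the tilted cascade is subcritical and has a positive dimension---a property that depends only on the location of $s_0$ relative to the tangent-at-origin of $\tau$, not on whether the original measure is critical.

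The hard part, and the only genuinely new step, is the two boundary values of $\gamma$. At an endpoint $\gamma_0=\tau'(s_0)$ with $\tau^*(\gamma_0)=0$, the tilted variable $\widetilde\xi$ satisfies $\mathbb E e^{\widetilde\xi}=1/2$ and $\mathbb E \widetilde\xi\,e^{\widetilde\xi}=0$, so the tilted cascade $\widetilde\mu_1$ is itself \emph{critical}. The analysis of \cite{Ba00, BaJin10} already deposits a piece of $\widetilde\mu_1$ on $E_\nu(\gamma_0)$, so non-emptiness is immediate; the upper bound forces $\dim E_\nu(\gamma_0)\le 0$, and Corollary \ref{co:noatoms} applied to $\widetilde\mu_1$ upgrades this to $E_\nu(\gamma_0)$ being uncountable of dimension exactly $0$ (without atomlessness one could only conclude non-emptiness, which is compatible with a single point, and even then a priori one would have to rule out the scenario where $\widetilde\mu_1$ concentrates at a single atom). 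For $\nu=\mu_1$, the left endpoint is $\gamma=0$ corresponding to the self-tilt $s_0=1$, i.e.\ $\widetilde\mu_1=\mu_1$, and the same argument applies. Finally, the equivalence $\min(I)=0\Leftrightarrow\nu=\mu_1$ follows because $\tau$ is strictly concave, $\tau(0)=-1$, $\tau(1)=0$, with $\tau'(1)>0$ when $\mathbb E\xi e^\xi<0$ (giving $\min(I)>0$) and $\tau'(1)=0$ when $\mathbb E\xi e^\xi=0$ (forcing the tangent through the origin at $s=1$ and giving $\min(I)=\tau'(1)=0$).
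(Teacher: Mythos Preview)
Your proposal is correct and follows exactly the approach the paper indicates: the paper states this theorem without proof, explaining that the results of \cite{Ba00} and \cite{BaJin10} for $\mu$ extend to $\mu_1$ once one knows $\mu_1$ is atomless (Corollary~\ref{co:noatoms}), which is precisely the three-step scheme (upper bound, interior lower bound via tilted subcritical cascades, boundary points via tilted critical cascades) you outline. Your identification of the sole new ingredient---applying atomlessness to the tilted critical cascade $\widetilde\mu_1$ at the endpoints $\tau'(s_0)$ with $\tau^*(\tau'(s_0))=0$---matches the paper's discussion verbatim.
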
 
\noindent The previous results can be extended if $[0,1]$ is endowed with a random metric associated with a non-degenerate  Mandelbrot cascade (as it was done in [3]) or a critical Mandelbrot cascade built simultaneously with $\mu$ or $\mu_1$.

Given $\alpha\in (0,1)$, the multifractal nature of the measure $\nu_\alpha$ associated with $L_\alpha$ and $\mu$ as in Section \ref{subse:KPZagain} has been studied in \cite{BaSe07}.   The case of $\nu_{\alpha,1}$ was not treated in \cite{BaSe07}, mainly because of the lack of information on the discrete or continuous nature of $\mu_1$. After our Theorem \ref{th:elementary} it is not hard to adapt the approach developed in \cite{BaSe07} to achieve the multifractal analysis of $\nu_{\alpha,1}$. It is even easier than that of $\nu_\alpha$ because the difficult discussion associated with the degree of approximations of the points of $[0,1]$ by the atoms of $\nu_\alpha$ is not necessary. Consequently we just state the result:
\begin{theorem}
Let $\alpha\in (0,1)$. Suppose that $\phi(s)>-\infty$ on a neighborhood of $(-\infty,1)$. Let $\tau_\alpha(s)=\min (\tau(s/\alpha),0)$. Let $\nu=\nu_\alpha$ or $\nu=\nu_{\alpha,1}$ according to $\E \, e^{\xi} = \frac{1}{2}$ and $ \E \, \xi e^\xi <0$ or $\E \, e^{\xi} = \frac{1}{2}$ and $ \E \, \xi e^\xi =0$. With probability 1, for all $\gamma\in[0,\infty]$, $E_\nu(\gamma)\neq\emptyset$ if and only if $\gamma$ belongs to the compact interval $I=[0,\gamma_{\max}=\max\{\gamma:\tau_\alpha^*(\gamma)\ge 0\}]$, and in this case $\dim E_\nu(\gamma)=\tau_\alpha^*(\gamma)$. 
\end{theorem}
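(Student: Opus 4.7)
The plan is to follow the strategy of Barral--Seuret (\cite{BaSe07}) for $\nu_\alpha$, exploiting the decisive simplification that in the critical case Corollary~\ref{co:noatoms} makes $F_{\mu_1}(x) = \mu_1([0,x])$ continuous and strictly increasing on the support of $\mu_1$. As a first step, we isolate the atomic structure of $\nu_{\alpha,1}$, which comes entirely from $L_\alpha$: the jumps of $L_\alpha$ form a Poisson random measure on $(0,\infty)\times [0,Y_1]$ of intensity proportional to $\Delta^{-\alpha-1}\, d\Delta \, ds$, and each jump $(\Delta_i, s_i)$ contributes an atom $\Delta_i \delta_{t_i}$ with $t_i = F_{\mu_1}^{-1}(s_i)$. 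Conditionally on $\mu_1$, the increment $\nu_{\alpha,1}(B(x,r))$ is stable of index $\alpha$ with scale $\mu_1(B(x,r))^{1/\alpha}$, so classical stable-tail asymptotics combined with the modulus of continuity of $\mu_1$ (Theorem~\ref{th:modulus}) yield the two-regime estimate
\[
\nu_{\alpha,1}(B(x,r)) \asymp \max\bigl(\Delta^*(x,r),\, \mu_1(B(x,r))^{1/\alpha}\bigr),
\]
where $\Delta^*(x,r)$ is the mass of the largest atom in $B(x,r)$. This is what underlies the formula $\tau_\alpha(s) = \min(\tau(s/\alpha),0)$: the linear piece with slope $1/\alpha$ comes from the continuous $\mu_1$-regime, the flat piece $0$ from the regime dominated by a single heavy atom.

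For the upper bound $\dim E_{\nu_{\alpha,1}}(\gamma) \le \tau_\alpha^*(\gamma)$ we cover the level set at dyadic scale $2^{-n}$ by intervals split into two families: ``atomic'' intervals containing a jump of $L_\alpha$ large enough to determine the prescribed exponent, whose cardinality is controlled by Poisson estimates on the number of atoms of a given size; and ``background'' intervals on which $\mu_1$ realizes local exponent $\alpha \gamma$, whose cardinality is governed by the multifractal spectrum of $\mu_1$ given by the previous theorem. Optimizing how $\gamma$ is partitioned between the two regimes returns $\tau_\alpha^*(\gamma)$. For the lower bound one associates to each $\gamma\in(0,\gamma_{\max}]$ the dual parameter $t_\gamma$ with $\alpha\gamma = \tau'(t_\gamma)$ and the tilted Mandelbrot measure $\widetilde\mu_{t_\gamma}$ built from $e^{t_\gamma\xi}/(2\E e^{t_\gamma\xi})$; by the $\mu_1$-multifractal analysis, $\widetilde\mu_{t_\gamma}$ concentrates on points with local $\mu_1$-exponent $\alpha\gamma$ and has local dimension $\tau_\alpha^*(\gamma)$. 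Pushing this measure forward through $F_{\mu_1}$ and composing with $L_\alpha$ (using the stable scaling to identify the local $\nu_{\alpha,1}$-exponent as $\gamma$) produces a Frostman measure $\pi_\gamma$ supported on $E_{\nu_{\alpha,1}}(\gamma)$ with local dimension $\tau_\alpha^*(\gamma)$, and the mass distribution principle closes the argument. The endpoint $\gamma=0$ is covered by the countable dense set of atoms, and $\gamma>\gamma_{\max}$ is excluded by Borel--Cantelli applied to the Poisson intensity and the tail estimate of Theorem~\ref{th:elementary}.

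The main technical obstacle, as usual, lies in the lower bound: one must verify that $\pi_\gamma$ concentrates on points whose local exponent is \emph{exactly} $\gamma$, not merely $\le \gamma$. In \cite{BaSe07} this requires a delicate ubiquity-type argument to control, for a generic point $x$, how accurately the atoms of $\nu_\alpha$ approximate it, because in the subcritical setting $\mu$ itself may carry heavy atoms that compete with the Poisson jumps of $L_\alpha$ as carriers of pure-jump mass. In our critical setting $\mu_1$ is atomless, so the atoms of $\nu_{\alpha,1}$ come solely from the Poisson jumps of $L_\alpha$, whose statistics are fully explicit; the ubiquity step therefore collapses to a routine Borel--Cantelli statement about Poisson atoms approximating points of $[0,1]$ pushed through the regular map $F_{\mu_1}^{-1}$. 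This is precisely the simplification alluded to in the paragraph preceding the theorem, and it reduces the whole argument to an application of the multifractal machinery of \cite{Ba00} composed with the stable-subordinator apparatus of \cite{BaSe07}.
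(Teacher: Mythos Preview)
Your overall strategy matches the paper's: the theorem is stated without proof, with the remark that the approach of \cite{BaSe07} adapts and that the critical case is in fact \emph{easier} because ``the difficult discussion associated with the degree of approximations of the points of $[0,1]$ by the atoms of $\nu_\alpha$ is not necessary.'' Your two-regime decomposition, upper bound via covering, and lower bound via tilted cascade measures are the right ingredients, and the plan would go through.

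However, your diagnosis of \emph{why} the ubiquity step disappears is wrong. You write that in the subcritical setting ``$\mu$ itself may carry heavy atoms that compete with the Poisson jumps of $L_\alpha$.'' This is false: the subcritical Mandelbrot measure $\mu$ is atomless (classical, going back to \cite{KP}), and in \emph{both} regimes the atoms of $\nu$ come solely from the jumps of $L_\alpha$. The real reason \cite{BaSe07} needs the ubiquity machinery is that the multifractal spectrum of $\mu$ is supported on $[\tau'(1),\cdot\,]$ with $\tau'(1)>0$; for $\gamma<\tau'(1)/\alpha$ there are no points of $\mu$-exponent $\alpha\gamma$, so your tilted-measure lower bound is simply unavailable there, and one must instead exhibit points exceptionally well approximated by large Poisson atoms---this is the delicate Diophantine step that produces the linear part of slope $\alpha$ in $\tau_\alpha^*$ over $[0,\tau'(1)/\alpha]$. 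In the critical case $\tau'(1)=0$, so the spectrum of $\mu_1$ reaches all the way down to $0$ (this is precisely the statement ``$\min(I)=0$'' in the preceding theorem); consequently $\tau(s/\alpha)\le 0$ for all $s$, hence $\tau_\alpha(s)=\tau(s/\alpha)$ identically and $\tau_\alpha^*(\gamma)=\tau^*(\alpha\gamma)$, the linear part disappears (as the paper notes in the paragraph following the theorem), and your tilted-measure construction covers every $\gamma\in(0,\gamma_{\max}]$ without any \emph{positive} ubiquity statement. The only Borel--Cantelli needed is the negative one (atoms do not approximate $\widetilde\mu_{t_\gamma}$-generic points \emph{too} well), which is indeed routine. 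So your plan works, but for a different reason than the one you give.
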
 
In particular, when $\nu=\nu_\alpha$, $\tau^*$ is linear of slope $\alpha$ over $[0,\tau'(1)/\alpha]$ and strictly concave over $[\tau'(1)/\alpha, \gamma_{\max}]$, the linear part being reminiscent of the atoms of $L_\alpha$, while when $\nu=\nu_{\alpha,1}$, such a linear part disappears.

\section{Proofs of Theorem \ref{th:exact} and Lemma \ref{le:phimodulus}}\label{se:generating}

We will prove Theorem \ref{th:exact}  using the results   in \cite{we11}. Let $\beta >1$ and denote by $Z_n$  the random variable
\begin{equation}
\label{Zn}
Z_n=
\sum_{\sigma\in\Sigma_n} \left(e^{X_\sigma }Y_1^{(\sigma)}\right)^\beta.
\end{equation}
It will be convenient to define the following  reparametrization of  its Laplace transform:
\begin{equation}\label{eq:sophi}
H_{n,\beta}(x):=\E \ e^{-e^{\beta x}Z_n}. 
\end{equation}
The convergence of $c(n)n^{\beta/2}Z_n$  in distribution (where $\log c(n)$ is a bounded sequence) will follow as we prove that there is a  bounded sequence $C(n)$ so that 
\begin{equation}
\label{eq:Hnb}
H_{n,\beta}(x+\log \sqrt{n}+C(n))
\end{equation}
converges for all $x\in \R$ (to a function with limit 1 at  $-\infty$)  as $n\to\infty.$

We start by deriving a recursion relation for $H_{n,\beta}(x)$ in $n$. Given a nonnegative random variable $Y$ we
define  the random variable $T_\beta Y$ by
\begin{equation}
T_\beta Y\stackrel{d}{=} \left((e^{ \xi_0}Y^{(0)})^\beta +(e^{ \xi_1}Y^{(1)})^\beta
\right)^\frac{1}{\beta},
\end{equation}

\vspace{0.3cm}

\noindent where $Y^{(0)}$ and $Y^{(1)}$ are independent copies of $Y$ and $\xi_0$ and $\xi_1$ are independent copies of $\xi$ which are also independent of  $Y^{(0)}$ and $Y^{(1)}$. With this definition we have
\begin{equation}
Z_n\stackrel{d}{=}\left(T_\beta^n Y_1\right)^\beta.
\end{equation}
Passing to Laplace transforms  and using independence, we get the desired recursion:

\begin{align*}
H_{n+1,\beta}(x)&=\E\left(\exp\left(-e^{\beta x}(T_\beta^{n+1}Y_1)^\beta\right)\right)\\
&=\E\left(\exp\left(-e^{\beta x}\left((e^{ \xi_0} (T_\beta^n Y_1)^{(0)})^\beta+(e^{ \xi_1} (T_\beta^n Y_1)^{(1)})^\beta\right)\right)\right)\\
&=\E\left(\exp\left(-e^{\beta x} e^{\beta \xi} (T_\beta^n Y_1)^\beta\right)\right)^2\\
&=\left(\int \rho(y)H_{n,\beta}(x+y)dy\right)^2,
\end{align*}

\noindent where $\rho$ is the density of the distribution of $\xi$. 
$H_{n,\beta}(x)$ is determined from this recursion given the  initial data 
$$
H_{0,\beta}(x)=\E(\exp(-e^{\beta x} Y_1^\beta)).
$$
To make the connection to  \cite{we11} we restrict to the Gaussian case, i.e. $\xi\sim N(-2\log 2,2\log 2)$, and define 
$$G_{n,\beta}(x)=\left(H_{n,\beta}( {2n\log 2}-\sqrt{2\log 2}x)\right)^{\frac{1}{2}}.$$
Some calculation gives the following recursion
\begin{equation}
\label{eq:recursion}
G_{n+1,\beta}(x)=\int \frac{e^{-\frac{y^2}{2}}}{\sqrt{2\pi}}\big(G_{n,\beta}(x+y)\big)^2dy
\end{equation}
with initial data 
\begin{equation}
\label{eq:initial}
G_{0,\beta}(x)=\left(\E\ e^{-e^{-\beta \sqrt{2\log 2} x}Y_1^\beta}\right)^{\frac{1}{2}}.
\end{equation}
Thus finding a bounded sequence $C(n)$ such that 
 \eqref{eq:Hnb} converges amounts to finding a  bounded sequence $C'(n)$  so that
\begin{equation}
 \label{eq:seq}
 G_{n,\beta}\Big(x+n\sqrt{2\log 2}-\frac{\log {n}}{2\sqrt{2\log 2}}+C'(n)\Big)
 \end{equation} converges for all $x\in \R$ to an appropriate limit.
The  recursion \eqref{eq:recursion} was studied in \cite{we11}. The main result is

\begin{theorem}
\label{th:webb}
{\rm(a)}\quad Let $G_n^{(\alpha)}$ be given by the recursion \eqref{eq:recursion} with initial data 
\begin{equation}
\label{eq:initial2
}G_0^{(\alpha)}(x)=\exp(-e^{-\alpha x})\ \ 0<\alpha\leq\infty
\end{equation}
{\rm (}where $G_0^{(\infty)}$ is the Heaviside function $\theta(x)${\rm )}. Let 
$
m_n^{(\alpha)}=\left(G_n^{(\alpha)}\right)^{-1}\left(\frac{1}{2}\right)$. 
Then, (as $n\to\infty$)   $G_n^{(\alpha)}\left(x+m_n^{(\alpha)}\right)$ converges uniformly on $\R$ to a function $w^{(\alpha)}$ satisfying
\begin{equation}\label{eq:stationary}
w^{(\alpha)}(x)=\int \frac{e^{-\frac{y^2}{2}}}{\sqrt{2\pi}} w^{(\alpha)}(x+y+c(\alpha))^2dy,
\end{equation}
where 
\begin{equation}
c(\alpha)=
\left\lbrace\begin{array}{c}
\frac{\alpha}{2}+\frac{\log 2}{\alpha}, \ \mathrm{for} \ \alpha\leq\sqrt{2\log 2} \\
c(\sqrt{2\log 2}), \ \mathrm{for} \ \alpha>\sqrt{2\log 2}
\end{array}\right. .
\end{equation}
 The function $w^{(\alpha)}$ is the unique solution to this equation under the assumptions that $w^{(\alpha)}(0)=\frac{1}{2}$, $w^{(\alpha)}(-\infty)=0$, $w^{(\alpha)}(\infty)=1$ and $w^{(\alpha)}$ is increasing.
Moreover, the shift sequence $(m_n^{(\alpha)})$ exhibits  the following asymptotic behavior:
\begin{equation}
\label{eq:mn}
m_n^{(\alpha)}=\left\lbrace\begin{array}{c}
c(\alpha)n+\gamma(\alpha)+\mathit{o}(1), \ \mathrm{for} \ \alpha<\sqrt{2\log 2} \\
\sqrt{2\log 2}n-\frac{1}{2\sqrt{2\log 2}}\log n+\mathcal{O}(1), \ \mathrm{for} \ \alpha=\sqrt{2\log 2}\\
\sqrt{2\log 2}n-\frac{3}{2\sqrt{2\log 2}}\log n+\mathcal{O}(1), \ \mathrm{for} \ \alpha>\sqrt{2\log 2}
\end{array}\right. ,
\end{equation}
where $\gamma(\alpha)$ is a some real number.

\noindent {\rm(b)}\quad   Suppose\footnote{In this section the notation $A(x)\sim B(x)$
as $x\to \infty$ is a shorthand for $\lim_{x\to\infty} \big(A(x)/B(x)\big)=c$ for some $c\in (0,\infty ).$} the initial condition
 $G_0(x)$ is an  increasing function with  $G_0(-\infty )=0$ and 
 $1-G_0(x)\sim e^{-\alpha x}$ as $x\to\infty$.  If  $\alpha<\sqrt{2\log 2}$
 the results of (a)  hold.  Moreover,  for  $\alpha\geq \sqrt{2\log 2}$,
if  the sequence $G_n^{(\alpha)}\left(x+m_n^{(\alpha)}\right)$ (defined now using the initial data $G_0$) converges uniformly then \eqref{eq:mn} holds.
\end{theorem}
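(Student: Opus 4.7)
\medskip

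\noindent\textbf{Proof plan.} The cleanest route is to work not directly with $G_n$ but with $H_n:=G_n^2$, which satisfies the standard quadratic branching recursion $H_{n+1}(x)=\bigl(\int\rho(y)H_n(x+y)\,dy\bigr)^2$ and therefore admits the probabilistic representation $H_n(x)=\E\prod_{|v|=n}H_0(x+S_v)$, where $(S_v)_{|v|=n}$ is the binary branching random walk (BRW) with standard Gaussian steps. For the special initial data $G_0^{(\alpha)}(x)=\exp(-e^{-\alpha x})$ this telescopes into
$$
H_n^{(\alpha)}(x)=\E\exp\bigl(-2e^{-\alpha x}W_n^{(\alpha)}\bigr),\qquad W_n^{(\alpha)}:=\sum_{|v|=n}e^{\alpha S_v},
$$
and the whole problem reduces to the asymptotics of the Biggins additive martingale $W_n^{(\alpha)}$. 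Linearizing the recursion for $u_n:=1-G_n$ and looking for exponential profiles $u_n(x)=e^{-\alpha(x-cn)}$ yields the dispersion relation $e^{\alpha c}=2e^{\alpha^2/2}$, hence $c(\alpha)=\alpha/2+\log 2/\alpha$, minimized at $\alpha^\ast=\sqrt{2\log 2}$ with value $\sqrt{2\log 2}$; this produces the three regimes in \eqref{eq:mn}.

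For $\alpha<\alpha^\ast$, the normalized martingale $e^{-\alpha c(\alpha)n}W_n^{(\alpha)}$ converges a.s.\ and in $L^p$ to a positive limit $W_\infty^{(\alpha)}$ by Biggins / Kahane--Peyri\`ere; substituting into the formula for $H_n^{(\alpha)}$ gives $H_n^{(\alpha)}(x+c(\alpha)n)\to\E\exp(-2e^{-\alpha x}W_\infty^{(\alpha)})$ and hence convergence of $G_n^{(\alpha)}(x+c(\alpha)n)$ to $w^{(\alpha)}(x):=\sqrt{\E\exp(-2e^{-\alpha x}W_\infty^{(\alpha)})}$, with the $\gamma(\alpha)+o(1)$ refinement coming from the rate of the martingale limit theorem. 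At $\alpha=\alpha^\ast$ the additive martingale degenerates, but the derivative martingale $D_n$ converges a.s.\ to a positive limit $D_\infty$, and the A\"id\'ekon--Shi theorem (Theorem \ref{th:deterministic}) gives $\sqrt{n}\,W_n^{(\alpha^\ast)}\to cD_\infty$ in probability. Substituting yields the logarithmic correction $-(2\alpha^\ast)^{-1}\log n$ and a non-trivial limit $w^{(\alpha^\ast)}$, the $\mathcal{O}(1)$ error reflecting that A\"id\'ekon--Shi holds only in probability rather than almost surely.

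The main obstacle is the supercritical regime $\alpha>\alpha^\ast$, which includes the Heaviside case $\alpha=\infty$ and carries the full Bramson shift $-\tfrac{3}{2\alpha^\ast}\log n$. For Heaviside data one has $H_n(x)=\Prob(\max_{|v|=n}S_v\le x)$, so the statement is precisely the asymptotics of the recentered BRW maximum. Intermediate $\alpha\in(\alpha^\ast,\infty)$ will be handled by sandwiching: the elementary inequalities $\exp(-e^{-\alpha y})\le\theta(y-t_-)$ and $\theta(y-t_+)\le\exp(-e^{-\alpha y})+\varepsilon$ on suitable windows, together with monotonicity of the recursion in $H_0$, squeeze $H_n^{(\alpha)}$ between translates of the Heaviside iterate. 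The Bramson correction will then be obtained either by adapting Bramson's tightness argument for BBM (truncated second-moment estimates on descendants of early high trajectories) to the Gaussian convolution kernel, or by invoking the BRW results of A\"id\'ekon (tightness of $\max_{|v|=n}S_v-\sqrt{2\log 2}\,n+\tfrac{3}{2\sqrt{2\log 2}}\log n$) and Hu--Shi. The identification of the limiting profile $w^{(\alpha)}$ in this regime uses the McKean martingale representation. Uniqueness of $w^{(\alpha)}$ among monotone solutions of \eqref{eq:stationary} with the stated normalizations is a standard sliding/maximum-principle argument comparing two candidate solutions.

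Finally, part (b) is proved by a further sandwich: a general monotone $G_0$ with $1-G_0(x)\sim e^{-\alpha x}$ can be squeezed between $c_\pm G_0^{(\alpha)}(\cdot-t_\pm)$ on every finite window, and monotonicity of the iteration $G\mapsto K\ast G^2$ in the initial datum then transports the convergence from (a). In the regime $\alpha\ge\alpha^\ast$ the supplementary assumption of uniform convergence of $G_n^{(\alpha)}(x+m_n^{(\alpha)})$ is exactly what is needed to conclude \eqref{eq:mn}, since the sandwich then collapses in the limit.
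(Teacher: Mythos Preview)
The paper does not prove Theorem~\ref{th:webb}: it is a quoted result, and the ``proof'' consists entirely of pointers to Theorem~3.4, Corollary~3.5, Lemma~5.3 and Sections~4--6 of \cite{we11}. Webb's argument there is a direct Bramson/KPP-style analysis of the recursion \eqref{eq:recursion} for the generating function itself, together with a maximum principle (reproduced in the present paper as Proposition~\ref{max}) to compare different initial data.

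Your route is genuinely different: you pass to $H_n=G_n^2$, recognize the BRW product representation $H_n(x)=\E\prod_{|v|=n}H_0(x+S_v)$, and reduce everything to known limit theorems for the additive/derivative martingales and the BRW maximum. For $\alpha<\sqrt{2\log 2}$ this is clean and actually gives more than \cite{we11}: a.s.\ convergence of the Biggins martingale yields the explicit formula $w^{(\alpha)}(x)=\bigl(\E\exp(-2e^{-\alpha x}W_\infty^{(\alpha)})\bigr)^{1/2}$ and the $c(\alpha)n+\gamma(\alpha)+o(1)$ asymptotics follow immediately. For $\alpha=\sqrt{2\log 2}$ the reduction to A\"id\'ekon--Shi is correct, though you should say a word about why convergence \emph{in probability} of $\sqrt{n}\,W_n^{(\alpha^\ast)}$ suffices to conclude convergence of the Laplace functional; uniform integrability is needed.

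There is, however, a real gap in your treatment of $\alpha>\sqrt{2\log 2}$. The ``elementary inequalities'' $\exp(-e^{-\alpha y})\le\theta(y-t_-)$ and $\theta(y-t_+)\le\exp(-e^{-\alpha y})+\varepsilon$ cannot hold globally (the first fails for all $y<t_-$, since the left side is strictly positive), and monotonicity of $H_0\mapsto H_n$ only propagates \emph{global} pointwise ordering. What actually works is not a two-sided sandwich but the single-crossing comparison of Proposition~\ref{max}: two increasing initial data that cross exactly once continue to cross exactly once under the iteration, and this pins down the relative position of the $m_n$'s. This is precisely the device \cite{we11} uses (and the present paper reuses in Lemma~\ref{compa}). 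Even after repairing this, your supercritical argument still bottoms out in a citation of A\"id\'ekon/Hu--Shi/Bramson for the recentered maximum, so in that regime your ``proof'' is, like the paper's, ultimately a reference to the literature.
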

\begin{proof} 
For (a) see Theorem 3.4, Corollary 3.5 and sections 4 and 5 in  \cite{we11}. For (b) $\alpha<\sqrt{2\log 2}$,  see Theorem 3.4 and Lemma 5.3 in \cite{we11} and for  $\alpha\geq \sqrt{2\log 2}$  see the argument in Section 6  of \cite{we11}.
\end{proof} 

To apply this Theorem we need the asymptotics of the initial data $G_{0,\beta}$ in  \eqref{eq:initial}. In fact, we claim that
\begin{equation}
\label{eq:initial1}
1-G_{0,\beta}(x)\sim e^{-\sqrt{2\log 2} x}\ \ {\rm as}\ \ x\rightarrow \infty.
\end{equation}
Indeed, 
let $\phi_\beta(t)=\E(\exp(-t Y_1^\beta))$. Using Theorem \ref{th:tail} we see that as $t\rightarrow 0$
\begin{eqnarray}
\label{eq:asym}
1-\phi_\beta(t)&=&t\int_0^\infty e^{-tx}\Prob(Y_1^\beta >x)dx\\
&=&\int_0^\infty e^{-x}\Prob(Y_1 >t^{-\frac{1}{\beta}}x^\frac{1}{\beta})dx
\sim t^{\frac{1}{\beta}}.
\end{eqnarray}
Hence $1-H_{0,\beta}(x)\sim (e^{\beta x})^{\frac{1}{\beta}}=e^x$ as $x\rightarrow -\infty$ which
translates to \eqref{eq:initial1} for $G_{0,\beta}(x)$ 

Thus the convergence of \eqref{eq:seq} follows from Theorem \ref{th:webb} (b)
provided we show the sequence $G_{n,\beta}(x+m_{n,\beta})$ converges
with the choice  $m_{n,\beta}\equiv \left(G_{n,\beta}\right)^{-1}\left(\frac{1}{2}\right)$.
We prove the convergence of  $G_{n,\beta}(x+m_{n,\beta})$  by comparing it to the solutions provided by  Theorem \ref{th:webb} (a)
with $\alpha<\sqrt{2\log 2}$ and $\alpha=\infty$ using the following maximum principle
from  \cite{we11}:
\begin{proposition}
 \label{max}Let $G_n^1$ and $G_n^2$ be given by the recursion \eqref{eq:recursion} with initial data $G_0^1$ and $G_0^2$ with the property that there exists a point $x_0$ so that $G_0^2(x)\geq G_0^1(x)$ for $x\geq x_0$ and $G_0^2(x)\leq G_0^1(x)$ for $x\leq x_0$. Then for every $n\geq 1$ there exists a point $x_n$ so that $G_n^2(x)\geq G_n^1(x)$ for $x\geq x_n$ and $G_n^2(x)\leq G_n^1(x)$ for $x\leq x_n$.
 The claim  holds also with strict inequalities.
\end{proposition}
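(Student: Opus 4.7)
The plan is to prove the proposition by induction on $n$, the base case $n=0$ being the hypothesis. Applied to \eqref{eq:recursion} and written with $p(y):=(2\pi)^{-1/2}e^{-y^2/2}$, the difference-of-squares factorisation yields
\begin{equation*}
G_{n+1}^2(x) - G_{n+1}^1(x) \;=\; \int p(y)\,\bigl[G_n^2(x+y)-G_n^1(x+y)\bigr]\bigl[G_n^2(x+y)+G_n^1(x+y)\bigr]\,\d y.
\end{equation*}
An elementary induction from \eqref{eq:recursion} shows $0\le G_n^i \le 1$, so the second bracket is nonnegative. By the inductive hypothesis the first bracket, as a function of $z=x+y$, is $\le 0$ on $(-\infty,x_n]$ and $\ge 0$ on $[x_n,\infty)$. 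Hence
\begin{equation*}
\psi(z):=\bigl[G_n^2(z)-G_n^1(z)\bigr]\bigl[G_n^2(z)+G_n^1(z)\bigr]
\end{equation*}
has a single sign change from nonpositive to nonnegative at $z=x_n$.

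It then remains to show that if $\psi\in L^1$ has this single-crossing pattern at some $z_0$, the convolution $F(x):=\int p(z-x)\psi(z)\,\d z$ does too, with crossover at some $x_{n+1}$. My key tool is the monotone likelihood ratio property of the Gaussian location family: for $x_1<x_2$ the ratio $p(z-x_2)/p(z-x_1)=\exp\bigl((x_2-x_1)z-(x_2^2-x_1^2)/2\bigr)$ is strictly increasing in $z$. Setting $c:=p(z_0-x_2)/p(z_0-x_1)>0$ gives the identity
\begin{equation*}
F(x_2) - c\,F(x_1) \;=\; \int \psi(z)\,p(z-x_1)\Bigl[\frac{p(z-x_2)}{p(z-x_1)}-c\Bigr]\,\d z,
\end{equation*}
in which the bracket has the same $(-,+)$ sign pattern at $z_0$ as $\psi$, so the integrand is pointwise nonnegative. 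Consequently $F(x_1)\ge 0$ implies $F(x_2)\ge 0$: the set $\{x:F(x)\ge 0\}$ is a closed upper half-line, whose infimum we take as $x_{n+1}$.

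The strict version follows from the same calculation: if the inductive hypothesis holds with strict inequalities on open intervals, then $\psi$ is strictly positive (respectively negative) on a set of positive Lebesgue measure to the right (respectively left) of $x_n$, and the strict positivity of $p$ forces $F$ to be strictly signed away from the single crossing. The technical heart of the proof is the MLR identity above, which is the concrete manifestation here of the total positivity (in the sense of Schoenberg and Karlin) of the Gaussian kernel, and the associated variation-diminishing property; no further analytic input is required beyond the initial factorisation.
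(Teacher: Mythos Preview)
The paper does not prove Proposition~\ref{max}; it is quoted verbatim as a result from \cite{we11}, so there is no in-paper proof to compare your argument against. Your approach---factor $G_{n}^{2}{}^2-G_{n}^{1}{}^2$ and then invoke the variation-diminishing property of Gaussian convolution via the monotone likelihood ratio identity---is a standard and correct way to establish this kind of single-crossing maximum principle, and is almost certainly the mechanism underlying the proof in \cite{we11} as well.

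Two small remarks on the write-up. First, the claim ``$0\le G_n^i\le 1$ by elementary induction'' needs the base case $0\le G_0^i\le 1$, which is not part of the proposition as stated; what you actually use is only $G_n^1+G_n^2\ge 0$, which is automatic for $n\ge 1$ from the recursion and for $n=0$ is implicit in every application in the paper (the initial data come from Laplace transforms). Second, your MLR argument shows that $\{F\ge 0\}$ is an upper set and $\{F\le 0\}$ a lower set; since their union is $\mathbb R$ and $F$ is continuous, any point in their (nonempty) intersection serves as $x_{n+1}$. The degenerate case where $F$ is strictly of one sign (so no finite $x_{n+1}$ exists) corresponds to $\psi\ge 0$ or $\psi\le 0$ identically, i.e.\ no genuine crossing at level $n$; this edge case is not excluded by the proposition as literally stated, but it never arises in the paper's applications, and in the strict-inequality version you sketch it is ruled out automatically.
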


We perform the comparison by considering the following family of initial conditions
with $\beta_1>0$:
\begin{equation}
G_0^{(\beta_1,\beta)}(x)=\left(\E\ e^{-e^{-\beta_1 \sqrt{2\log 2}x} Y_1^{\beta}}\right)^{\frac{1}{2}}.
\end{equation}
Note that $G_0^{(\beta,\beta)}=G_{0,\beta}$,  $G_0^{(\infty,\beta)}=\theta(x)$
and from \eqref{eq:asym} we have
\begin{equation}
\label{eq:initial3}
1-G_0^{(\beta_1,\beta)}(x)\sim e^{-\frac{\beta_1}{\beta}\sqrt{2\log 2} x}\ \ {\rm as}\ \ x\rightarrow \infty.
\end{equation}
Let $G_n^{(\beta_1,\beta)}$ be given by the recursion \eqref{eq:recursion} with this initial data and $m_{(\beta_1,\beta),n}=\left(G_n^{(\beta_1,\beta)}\right)^{-1}\left(\frac{1}{2}\right)$ (one can use the recursion to check that $G_n^{(\beta_1,\beta)}$ is strictly increasing and this is well defined). 

\begin{lemma} 
\label{compa}
For $x\geq 0$,  $G_n^{(\beta_1,\beta)}(x+m_{(\beta_1,\beta),n})$ is increasing in $\beta_1$ and for $x\leq 0$ it is decreasing in $\beta_1$.
\end{lemma}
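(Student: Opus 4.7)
The proof will proceed by induction on $n$, with Proposition \ref{max} playing the central role.

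\emph{Base case $(n=0)$.} Introduce $\psi(u) := \mathbb{E} \, e^{-u Y_1^{\beta}}$, a strictly decreasing bijection of $[0,\infty)$ onto $(0,1]$, and let $u^{*}\in(0,\infty)$ be the unique solution of $\psi(u)=1/4$. Then $m_{(\beta_1,\beta),0} = -(\log u^{*})/(\beta_1\sqrt{2\log 2})$, so that
$$
G_0^{(\beta_1,\beta)}\bigl(x+m_{(\beta_1,\beta),0}\bigr) \;=\; \psi\bigl(u^{*}\, e^{-\beta_1\sqrt{2\log 2}\,x}\bigr)^{1/2}.
$$
Since $\psi$ is strictly decreasing, for $x>0$ the argument of $\psi$ strictly decreases in $\beta_1$ and hence the centered value is strictly increasing in $\beta_1$; the case $x<0$ is symmetric.

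\emph{Inductive step.} Take $\beta_1<\beta_1'$ and write $F_n=G_n^{(\beta_1,\beta)}$, $F_n'=G_n^{(\beta_1',\beta)}$, with centering constants $m_n$, $m_n'$. The base case shows that $F_0$ and $F_0'$ meet at $x=0$ with $F_0'\ge F_0$ on $[0,\infty)$ and the reverse on $(-\infty,0]$, which is exactly the crossing hypothesis of Proposition \ref{max}. Iterating the proposition, at each level $n$ there exists a crossover point $y_n$ with $F_n'\ge F_n$ on $[y_n,\infty)$ and the reverse on $(-\infty,y_n]$. The task is to upgrade this to the corresponding statement for the centered iterates $\tilde F_n(x):=F_n(x+m_n)$ and $\tilde F_n'(x):=F_n'(x+m_n')$, namely $\tilde F_n'\ge \tilde F_n$ on $[0,\infty)$ and the reverse on $(-\infty,0]$, which is the lemma. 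The plan is to run the induction with the centered recursion
\begin{equation*}
\tilde F_{n+1}(x) \;=\; \int \phi(y)\,\tilde F_n(x+\delta_n+y)^2\,dy,\qquad \delta_n := m_{n+1}-m_n,
\end{equation*}
in which $\delta_n$ is uniquely determined by the normalising identity $\int \phi(y)\,\tilde F_n(\delta_n+y)^2\,dy = 1/2$. Under the induction hypothesis, this identity (applied for both $\beta_1$ and $\beta_1'$) pins down a definite ordering between $\delta_n$ and $\delta_n'$; plugging this ordering back into the centered recursion and applying Proposition \ref{max} once more, now to the two centered iterates, yields the claim at level $n+1$.

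\emph{Main obstacle.} The crucial technical step is extracting the correct sign of $\delta_n-\delta_n'$ from the two normalising equations, because the induction hypothesis provides only opposite pointwise orderings of $\tilde F_n$ and $\tilde F_n'$ on the two half-lines, so the sign of the Gaussian integral of the relevant squared differences is not immediate. I expect to resolve this by splitting the integral at $y=0$, exploiting the symmetry of $\phi$ together with the common value $\tilde F_n(0)=\tilde F_n'(0)=1/2$, and thereby isolating the needed sign information. Once $\delta_n$ and $\delta_n'$ are correctly ordered, the remainder of the inductive step becomes a routine application of Proposition \ref{max} in centered coordinates.
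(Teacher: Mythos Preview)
Your inductive scheme has a genuine gap, and the obstacle is not the one you identify. Suppose you have established the induction hypothesis: the centered functions $\tilde F_n$ and $\tilde F_n'$ cross once, at $0$. To advance to level $n+1$ via Proposition~\ref{max} you need the \emph{shifted} functions $\tilde F_n(\cdot+\delta_n)$ and $\tilde F_n'(\cdot+\delta_n')$ to have a single crossing. But the single-crossing property of two increasing functions is \emph{not} preserved when the two functions are translated by different amounts; knowing the sign of $\delta_n-\delta_n'$ does not rescue this. For instance, if $\delta_n<\delta_n'$ one can check that $\tilde F_n'(x+\delta_n')\ge \tilde F_n(x+\delta_n)$ for $x\ge -\delta_n$, but on $(-\infty,-\delta_n)$ the comparison is inconclusive and nothing rules out multiple sign changes. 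So even granting your ``main obstacle'' is resolved, the step you call a ``routine application of Proposition~\ref{max}'' does not go through.

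The paper's argument avoids induction altogether and exploits a feature of the \emph{initial} data that is lost after even one iteration. For fixed $n_0$, it translates the level-$0$ functions by the level-$n_0$ centering constants, i.e.\ it compares $G_0^{(\beta_1,\beta)}(\cdot+m_{(\beta_1,\beta),n_0})$ and $G_0^{(\beta_1',\beta)}(\cdot+m_{(\beta_1',\beta),n_0})$. Because $G_0^{(\beta_1,\beta)}(x)=\psi(e^{-\beta_1\sqrt{2\log 2}\,x})^{1/2}$ with the \emph{same} monotone outer function $\psi$ for all $\beta_1$, any two such exponentially-parametrized curves, shifted by \emph{arbitrary} constants, still meet at a single explicitly computable point $x_0$. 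Proposition~\ref{max} is then iterated $n_0$ times in one shot: the resulting functions are precisely the centered level-$n_0$ iterates $\tilde F_{n_0},\tilde F_{n_0}'$, they inherit single crossing, and since both equal $1/2$ at $0$ the crossing must occur there. The point is that the robustness to arbitrary shifts is a special property of the exponential family $\{\psi(e^{-\beta_1 c\,\cdot})\}_{\beta_1}$ and is exactly what your inductive hypothesis fails to carry.
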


\begin{proof} Let $\beta_1> \beta_1'$ and $n_0$ be some fixed positive integer. Define $G_0^2(x)= G_0^{(\beta_1,\beta)}(x+m_{(\beta_1,\beta),n_0})$ and $G_0^1(x)=G_0^{(\beta_1',\beta)}(x+m_{(\beta_1',\beta),n_0})$. We then note that if

\begin{equation}
x> x_0:=\frac{\beta_1'm_{(\beta_1',\beta),n_0}-\beta_1m_{(\beta_1,\beta),n_0}}{\beta_1-\beta_1'},
\end{equation}
then

\begin{equation}
\exp\left(-e^{-\beta_1 \sqrt{2\log 2}(x+m_{(\beta_1,\beta),n_0})} Y_1^{\beta}\right)>\exp\left(-e^{-\beta_1' \sqrt{2\log 2}(x+m_{(\beta_1',\beta),n_0})} Y_1^{\beta}\right)
\end{equation}
and $G_0^2(x)> G_0^1(x)$. Similarly for $x< x_0$, $G_0^2(x)< G_0^1(x)$. Thus by 
Proposition \ref{max}  there exists a point $x_n$ so that $G_n^2(x)> G_n^1(x)$ for $x>x_n$, and the opposite inequality holds for $x<x_n$. Let us set $n=n_0$ and note that $G_{n_0}^2(0)=G_{n_0}^1(0)=\frac{1}{2}$. Thus $x_{n_0}=0$. Since $n_0$ was arbitrary, we have proven our claim. 
\end{proof}
We can now finish the proof of convergence of the
sequence $G_{n,\beta}(x+m_{n,\beta})$. 
First,  by Theorem \ref{th:webb}(b)  and \eqref{eq:initial3}. the quantity $G_n^1(x):=G_n^{(\beta_1,\beta)}(x+m_{(\beta_1,\beta),n})$ 
converges to $w^{(\alpha)}(x)$ uniformly in $x$ with $\alpha=\frac{\beta_1}{\beta}\sqrt{2\log 2}$ provided
$\beta_1<\beta$. Second, by Theorem \ref{th:webb} (a) $G_n^2(x):=G_n^{(\infty,\beta)}(x+m_{(\infty,\beta),n})$ 
converges to $w^{(\infty)}(x)=w^{(\sqrt{2\log 2})}(x)$,
uniformly in $x$. Finally, by Lemma \ref{compa} we have
$$
G_n^1(x)<G_{n,\beta}(x+m_{n,\beta})<G_n^2(x)
$$
for $x>0$, and the opposite inequalities hold for  $x<0$. Since $w^{(\alpha)}(x)\to w^{(\sqrt{2\log 2})}(x)$
as $\alpha\uparrow  \sqrt{2\log 2}$, uniformly in $x$,  we conclude that the
sequence $G_{n,\beta}(x+m_{n,\beta})$ is convergent and hence by Theorem \ref{th:webb} (b) 
that $m_{n,\beta}\equiv \left(G_{n,\beta}\right)^{-1}\left(\frac{1}{2}\right)$ is given by \eqref{eq:mn}.
Hence we have shown the existence of a bounded sequence $c(n)$ such that when the left hand side of  \eqref{Zn} is multiplied by $c(n)$, the product converges
 in distribution  to some non-trivial random variable $Z$. As the $Z_n$:s satisfy the 'smoothing recursion'
$$
Z_{n+1}\stackrel{d}{=} e^{\beta \xi_0}Z_n^{(0)}+e^{\beta \xi_1}Z_n^{(1)},
$$
it follows that after normalization $Z$ is a fixed point of the smoothing transform equation
$$
Z\stackrel{d}{=} e^{\beta \xi_0}Z^{(0)}+e^{\beta \xi_1}Z^{(1)}.
$$
This has (up to a constant factor)the unique solution $Y_\beta$  \cite{duli83} .
The proof of Theorem \ref{th:exact} is complete.

\vskip 2mm

We end this Section by proving Lemma \ref{le:phimodulus} and proving a similar result needed in the last section.

\begin{proof}[Proof of Lemma \ref{le:phimodulus}] Recall the partition function
\eqref{eq:defZbeta} and set
$$
K_{n,\beta}(x):=\E \ e^{-e^{\beta x}Z_{\beta,n}}. 
$$
Thus
$$
\phi_{\beta,n}({e^{\beta x}})=K_{n,\beta}(x+{\alpha_\beta}\log n)
$$
with $\alpha_1=1/2$ and  $\alpha_\beta=3/2$ for $\beta>1$. Proceeding as
earlier we get
$$
K_{n,\beta}(x)=\tilde H_{n,\beta}(x)
$$
where $\tilde H_{n,\beta}$ solves the same recursion as $H_{n,\beta}$, but  with initial
condition
$$
\tilde H_{0,\beta}(x)=\exp(-e^{\beta x}).
$$
In the``$G$-language" this becomes  
$$\tilde G_{0,\beta}(x)=\exp(-\frac{1}{2}e^{-\beta \sqrt{2\log 2}x})=G_0^{(\beta\sqrt{2\log 2})}\Big(x+\frac{\log 2}{\beta\sqrt{2\log 2}}\Big)$$
 and
$$
\tilde G_{n,\beta}(x)=G_n^{(\beta\sqrt{2\log 2} )}\Big(x+\frac{\log 2}{\beta\sqrt{2\log 2}}\Big).
$$
Hence 
\begin{align}
\label{phig}
\phi_{\beta,n}({e^{\beta x}})&=\left(\tilde G_{n,\beta}\Big(-\frac{x}{\sqrt{2\log 2}}+\sqrt{2\log 2}n-\frac{\alpha_\beta}{\sqrt{2\log 2}}\log n\Big)\right)^2\\
&=\left(G_n^{(\beta\sqrt{2\log 2}) }\Big(-
\frac{x}{\sqrt{2\log 2}}+m_n^{(\beta\sqrt{2\log 2})}+a_n\Big)\right)^2\nonumber
\end{align}
for some bounded sequence $a_n$ (which depends on $\beta$). 

As in Lemma \ref{compa} we get 
$$G_n^{(\alpha)}\left(x+m_n^{(\alpha)}\right)<G_n^{(\alpha')}\left(x+m_n^{(\alpha')}\right)$$
if $\alpha<\alpha'$ and $x>0$. 
Combining this inequality with eq. \eqref{phig} allows us to get the $t\to 0$ asymptotics of the low temperature object $\phi_{\beta,n}(t)$ 
from the corresponding   asymptotics in high temperature. Indeed, fix $\beta'<1$. Then for $x$ small enough and for
 some  bounded sequences $b_n, c_n$   for all $n$ we get
$$
\phi_{\beta,n}({e^{\beta x}})\geq \left(G_n^{(\beta'\sqrt{2\log 2}) }\Big(-
\frac{x}{\sqrt{2\log 2}}+m_n^{(\beta'\sqrt{2\log 2})}+b_n\Big)\right)^2=\E \ e^{-c_ne^{\beta' x}\frac{Z_{\beta',n}}{\E Z_{\beta',n}}}
$$
where $Z_{\beta',n}$ is the high temperature partition function. Using $e^{-x}\geq 1-x$ for $x\geq 0$ we get 
$$
\E \ e^{-c_ne^{\beta' x}\frac{Z_{\beta',n}}{\E Z_{\beta',n}}}\geq 1-c_ne^{\beta' x}
$$
and therefore, by denoting $c=\sup_{n\geq 1} c_n$ we have
$$
1-\phi_{\beta,n}(t)\leq c t^{\beta'/\beta}.
$$
Thus, fixing $\gamma<1/\beta$ it holds that
$$
1-\phi_{\beta,n}(t)\leq C(\beta,\gamma)t^{\gamma}.
$$
for some  $C(\beta,\gamma)<\infty$ and $t\leq t(\beta,\gamma)$ with $ t(\beta,\gamma)>0$. Since this inequality is trivial for $t$ bounded away from zero the claim follows.
\end{proof}

\begin{lemma}
\label{le:tailpr}
For any $\beta>1$, $\theta>0$ and $q\in(0,\frac{1}{\beta})$, 

\begin{equation}
\mathbb{P}\left(\sum_{\sigma\in \Sigma_n}(\sqrt{n}\mu_1(I_\sigma))^\beta> n^\theta\right)\leq C(q)n^{-q\theta}.
\end{equation}
\end{lemma}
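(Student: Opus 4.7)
The plan is to reduce the claimed tail inequality to a uniform modulus-of-continuity estimate for the Laplace transform of $W_n := \sum_{\sigma \in \Sigma_n}(\sqrt n\,\mu_1(I_\sigma))^\beta$, in parallel with the proof of Lemma~\ref{le:phimodulus}. By the elementary inequality $\Prob(X > \lambda) \leq (1-e^{-1})^{-1}\E[1-e^{-X/\lambda}]$ applied with $X = W_n$ and $\lambda = n^\theta$, it will suffice to show that for each $q \in (0, 1/\beta)$ there exists $C(q)$ such that $1 - \Psi_n(t) \leq C(q)\,t^q$ for all $n \geq 1$ and $t \in (0,1]$, where $\Psi_n(t) := \E[e^{-tW_n}]$.

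By \eqref{eq:semi-stable}, $W_n \eqlaw n^{\beta/2}Z_n$ with $Z_n$ as in \eqref{Zn}, so $\Psi_n(t) = H_{n,\beta}(\log t/\beta + (\log n)/2)$. Passing to the $G$-formulation used in the proof of Theorem~\ref{th:exact} and invoking the shift asymptotics of $m_{n,\beta}$ (the boundary case $\alpha = \sqrt{2\log 2}$ of \eqref{eq:mn}), I would check that
\begin{equation*}
\Psi_n(t) = G_{n,\beta}\bigl(m_{n,\beta} + v(t)\bigr)^2,\qquad v(t) = -\frac{\log t}{\beta\sqrt{2\log 2}} + O(1),
\end{equation*}
with the $O(1)$ uniform in $n$ and $t$; in particular $v(t) \geq 0$ for $t \leq t_0$ for some absolute $t_0\in(0,1)$. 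Fixing $q \in (0,1/\beta)$ I would choose $\beta_1 \in (q\beta^2,\beta)$, so that $\beta_1/\beta^2 > q$ and $\alpha := (\beta_1/\beta)\sqrt{2\log 2} < \sqrt{2\log 2}$. Lemma~\ref{compa} then yields the sandwich $G_{n,\beta}(m_{n,\beta}+y) \geq G_n^{(\beta_1,\beta)}(m_n^{(\beta_1,\beta)}+y)$ for $y \geq 0$; combined with the elementary bound $1 - x^2 \leq 2(1-x)$, this produces, for $t \in (0,t_0]$,
\begin{equation*}
1 - \Psi_n(t) \;\leq\; 2\Bigl(1 - G_n^{(\beta_1,\beta)}\bigl(m_n^{(\beta_1,\beta)} + v(t)\bigr)\Bigr).
\end{equation*}

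The crucial ingredient will be a uniform exponential tail estimate in the subcritical regime $\alpha < \sqrt{2\log 2}$: the initial datum satisfies $1 - G_0^{(\beta_1,\beta)}(y) \sim C_0 e^{-\alpha y}$ by \eqref{eq:initial3}, and Theorem~\ref{th:webb}(b) furnishes the uniform convergence $G_n^{(\beta_1,\beta)}(\cdot + m_n^{(\beta_1,\beta)}) \to w^{(\alpha)}$ with $1 - w^{(\alpha)}(y) \sim C e^{-\alpha y}$. Tracking the tail coefficient $\sup_{y \geq 0} e^{\alpha y}\bigl(1 - G_n^{(\beta_1,\beta)}(y + m_n^{(\beta_1,\beta)})\bigr)$ through the quadratic convolution recursion \eqref{eq:recursion}, starting from the controlled initial exponential tail, one obtains a bound uniform in $n$. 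Plugging $y = v(t)$ and using $\alpha v(t) = (\beta_1/\beta^2)|\log t| + O(1)$ yields $1 - \Psi_n(t) \leq C_1\, t^{\beta_1/\beta^2} \leq C_2\, t^q$ on $(0,t_0]$; on $[t_0,1]$ the trivial estimate $1 - \Psi_n \leq 1 \leq t_0^{-q}t^q$ suffices, and the Markov-type first step then delivers the lemma. The hard part will be the uniform exponential tail: the sup-norm convergence of Theorem~\ref{th:webb} alone does not yield a tail rate, since its error, however small, dominates $e^{-\alpha y}$ for large $y$ at fixed $n$. Closing this gap is the principal obstacle and requires a direct recursive analysis of the tail coefficient, analogous to the classical uniform tail estimates for the extremum of a subcritical branching random walk.
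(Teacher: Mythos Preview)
Your approach is essentially the same as the paper's: reduce to a uniform bound on $1-\Psi_n(t)$, pass to the $G$-formulation, and use Lemma~\ref{compa} to compare $G_{n,\beta}(\cdot+m_{n,\beta})$ with a subcritical $G_n^{(\beta_1,\beta)}(\cdot+m_{(\beta_1,\beta),n})$ for $\beta_1<\beta$, whose tail decays like $e^{-(\beta_1/\beta)\sqrt{2\log 2}\,y}$. The two proofs differ only in the final Chebyshev step --- you use the elementary bound $\Prob(X>\lambda)\le (1-e^{-1})^{-1}(1-\E e^{-X/\lambda})$, whereas the paper bounds $\E W_n^q$ via the integral representation $\E W_n^q=c_q\int_0^\infty t^{-1-q}(1-\Psi_n(t))\,dt$ and then applies Markov; both give the same conclusion.

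On the point you flag as the principal obstacle: you are right that uniform convergence to $w^{(\alpha)}$ does not by itself yield a uniform exponential tail, but the paper does not reprove this. It simply invokes the proof of Theorem~3.4 and Lemma~5.3 of \cite{we11}, which already establish that $1-G_n^{(\beta_1,\beta)}(y+m_{(\beta_1,\beta),n})\sim e^{-(\beta_1/\beta)\sqrt{2\log 2}\,y}$ uniformly in $n$ in the subcritical regime. So the ``direct recursive analysis of the tail coefficient'' you propose is exactly what is carried out in \cite{we11}; you need not redo it here.
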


\begin{proof}
The proof is almost identical to that of Lemma \ref{le:phimodulus}. We recall that we argued at the beginning of the previous page that $G_{n,\beta}(x+m_{n,\beta})$ converges uniformly. Moreover, we know by \eqref{eq:initial3} that $1-G_{0,\beta}(x)\sim e^{-\sqrt{2\log 2}x}$. Thus by part (b) of Theorem \ref{th:webb}, $m_{n,\beta}=\sqrt{2\log 2}n-\frac{1}{2\sqrt{2\log 2}}\log n+\mathcal{O}(1)$. Lemma \ref{compa} then implies that for small enough $x$ and any $\beta_1<\beta$

\begin{align*}
H_{n,\beta}\left(x+\frac{1}{2}\log n\right)&=G_{n,\beta}\left(-\frac{x}{\sqrt{2\log 2}}+\sqrt{2\log 2}n-\frac{1}{2\sqrt{2\log 2}}\log n\right)^2\\
&=G_{n,\beta}\left(-\frac{x}{\sqrt{2\log 2}}+m_{\beta,n}+\mathcal{O}(1)\right)^2\\
&\geq G_n^{(\beta_1,\beta)}\left(-\frac{x}{\sqrt{2\log 2}}+m_{(\beta_1,\beta),n}+\mathcal{O}(1)\right)^2.
\end{align*}

\vspace{0.3cm}

It follows from the proof of Theorem 3.4 and Lemma 5.3 of \cite{we11} that for $x\geq 0$, $1-G_n^{(\beta_1,\beta)}(x+m_{(\beta_1,\beta),n})\sim e^{-\frac{\beta_1}{\beta} \sqrt{2\log 2}x}$. Thus we conclude that there exists a constant $C>0$ so that for small enough $x$ (say $x\leq -M$)

\begin{equation}
1-H_{n,\beta}\left(x+\frac{1}{2}\log n\right)\leq C e^{\frac{\beta_1}{\beta}x}.
\end{equation}

\vspace{0.3cm}

We then have by Markov's inequality for any $q>0$

\begin{align*}
\mathbb{P}&\left(\sum_{\sigma\in \Sigma_n}(\sqrt{n}\mu_1(I_\sigma))^\beta> n^\theta\right)\leq n^{-q\theta}\E \left(\left(\sum_{\sigma\in \Sigma_n}(\sqrt{n}\mu_1(I_\sigma))^{\beta}\right)^q\right)\\
&=\widetilde{C}(q)n^{-\theta q}\int_0^\infty t^{-1-q}\left(1-H_{n,\beta}\left(\frac{1}{\beta}\log t+\frac{1}{2}\log n\right)\right)dt\\
&\leq \widehat{C}(q)n^{-\theta q} \left(\int_0^{e^{-\beta M}} t^{-1-q}  t^{\frac{\beta_1}{\beta^2}}dt+\int_{e^{-\beta M}}^\infty t^{-1-q}dt\right).
\end{align*}

\vspace{0.3cm}

We see that for $q\in(0,\frac{1}{\beta})$ both of the integrals are finite (when we choose $\beta_1$ close enough to $\beta$). Thus we find our claim.

\end{proof}

\section{Complement on $\mu_1$-almost everywhere local behavior of $\mu_1$}\label{se:localbehavior}

In the subcritical case $\beta < 1$ there exist very good bounds for the almost sure fluctuations of the measure $\mu_\beta$ considered at $\mu_\beta$-almost every $x \in [0,1]$. Denoting by $I_n(x)$ the unique half-open dyadic interval of level $n$ containing $x$, under rather general conditions on $\xi$ it holds that almost surely for $\mu_\beta$-almost every $x \in [0,1]$
$$
2^{-\alpha n} e^{-b \sqrt{n \log \log n}} \leq \mu_\beta(I_n(x)) \leq 2^{-\alpha n} e^{b \sqrt{n \log \log n}}
$$
for all large $n$, where $\alpha$ and $b$ are constants depending on $\beta$ and the distribution of $\xi$; see \cite{liu01} for the precise statement of the result. In effect, the measure $\mu_\beta$ satisfies a kind of a law of the iterated logarithm.

In this section we consider the corresponding fluctuation problem in the critical case $\beta = 1$, i.e. the question of finding bounds $\psi(n)$, $\phi(n)$ such that almost surely, for $\mu_1$-almost every $x \in [0,1]$ one has
$$
\psi(n) \leq \mu_1(I_n(x)) \leq \phi(n)
$$
for all large $n$. Clearly, the optimal fluctuation bounds cannot have the same form as in the subcritical case, as one would need to have $\alpha = 0$ above. Our method of obtaining bounds depends on Theorem \ref{th:exact} and thus we restrict to the case of a Gaussian $\xi$.

\begin{theorem}\label{thm:gauge2} Suppose $\xi$ is Gaussian. Then the following statements hold.
\begin{enumerate} 
\item Let $f:\N_+\to \R_+^*$ be a nonincreasing function converging to 0 at infinity. If $\displaystyle\liminf_{n\to\infty} \frac{\log f(n)}{-\sqrt{n\log(n)}}>\sqrt{2 \log 2}$ then almost surely, 
$$
\mu_1\left( \left\{ x: \mu_1(I_n(x)) \ge f(n)\textrm{ for infinitely many n } \right\} \right) = \mu_1([0,1]).
$$
\item Let $\displaystyle 
f_\alpha(n) = \exp\left( - \sqrt{6 \log 2} \sqrt{n \left(\log n +\alpha  \log \log n \right)}\right)
$ for $\alpha > \frac{1}{3}$. Then almost surely,
$$
\mu_1\left( \left\{ x: \mu_1(I_n(x)) \geq f_\alpha(n) \textrm{ for all but finitely many }n \right\} \right) = \mu_1([0,1]).
$$
\item Almost surely, for all $k\in\N$
$$
\mu_1\left( \left\{ x: \mu_1(I_n(x)) \le n^{-k} \textrm{ for all but finitely many n } \right\} \right) = \mu_1([0,1]).
$$
\end{enumerate}
\end{theorem}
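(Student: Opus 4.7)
The three statements of Theorem \ref{thm:gauge2} constrain the $\mu_1$-typical local scale of $\mu_1(I_n(x))$: (1) and (2) give lower bounds (infinitely often and eventually, respectively), while (3) gives an upper bound. I would prove (1)--(2) and (3) by distinct first-moment arguments, each of which must circumvent the infinite mean of $Y_1$.

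For (1) and (2), set $A_n = \{x : \mu_1(I_n(x)) < f(n)\}$. The distributional identity \eqref{eq:semi-stable} and symmetry give
\[
\E\mu_1(A_n) = 2^n\, \E\bigl[e^{X_\sigma}\, Y_1\, \ind_{e^{X_\sigma} Y_1 < f(n)}\bigr],
\]
which is finite despite $\E Y_1 = \infty$ because of the intrinsic truncation by $f(n)$. Using the tail $\Prob(Y_1 > y) \sim d/y$ from Theorem \ref{th:tail} one has $\E[Y_1\ind_{Y_1 < t}] = O(\log t)$ as $t\to\infty$, and tilting the Gaussian law of $X_\sigma$ by $e^{X_\sigma}$ reduces the estimate to a Gaussian large-deviation integral. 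For $f(n) = e^{-C\sqrt{n\log n}}$ the leading-order contribution is of order $n^{1/2 - C^2/(4\log 2)}$ up to logarithmic factors; the threshold $C = \sqrt{2\log 2}$ in (1) is the value at which this first becomes $o(1)$, so for $C$ past it $\E\mu_1(A_n) \to 0$, and a dyadic-subsequence Borel--Cantelli combined with $\mu_1([0,1]) = Y_1$ yields the ``infinitely often'' statement. The sharper coefficient $\sqrt{6\log 2}$ and the condition $\alpha > 1/3$ in (2) arise by tracking all logarithmic corrections in the same computation with $f_\alpha$ in place of $f$, so that $\sum_n \E\mu_1(\{\mu_1(I_n(x)) < f_\alpha(n)\}) < \infty$ and a full (non-subsequential) Borel--Cantelli upgrades ``infinitely often'' to ``eventually''.

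For (3) the infinite moments of $Y_1$ preclude any $L^{1+\delta}$-type bound on $\mu_1(A_n)$ with $A_n = \{\mu_1(I_n(x)) > n^{-k}\}$, and one uses Lemma \ref{le:tailpr} instead. The pointwise inequality $\mu_1(I_\sigma) \leq n^{k(\beta-1)}\mu_1(I_\sigma)^\beta$ on $\{\mu_1(I_\sigma) > n^{-k}\}$ (valid for any $\beta > 1$) yields $\mu_1(A_n) \leq n^{k(\beta-1)}\sum_\sigma \mu_1(I_\sigma)^\beta$, and combining with the Lemma gives, for any $\theta > 0$ and $q \in (0,1/\beta)$,
\[
\Prob\bigl( \mu_1(A_n) > n^{k(\beta-1)+\theta-\beta/2}\bigr) \leq C_q\, n^{-q\theta}.
\]
The choice $\beta = 1 + 1/(4k)$ makes the deterministic exponent $k(\beta-1)-\beta/2 = -1/4 - 1/(8k)$ strictly negative. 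The polynomial tail $n^{-q\theta}$ is not summable in $n$ for small $\theta$, so a Borel--Cantelli over all $n$ fails; however, along the subsequence $n_j = 2^j$ the series $\sum_j n_j^{-q\theta} = \sum_j 2^{-jq\theta}$ converges for \emph{every} $q\theta > 0$, and Borel--Cantelli yields $\mu_1(A_{n_j}) = O(n_j^{-\eta})$ almost surely for some $\eta > 0$. A random-measure Borel--Cantelli then gives $\mu_1$-a.e.\ $x$ the bound $\mu_1(I_{n_j}(x)) \leq n_j^{-k}$ for all large $j$, and the monotonicity of $n\mapsto \mu_1(I_n(x))$ together with $n_{j+1} = 2 n_j$ upgrades this to $\mu_1(I_n(x)) \leq 2^k n^{-k}$ for all large $n$. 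The harmless constant $2^k$ is absorbed by rerunning the argument with $k+1$ in place of $k$, and a countable intersection over $k \in \N$ yields the simultaneous statement. The principal obstacle throughout is the infinite mean of $Y_1$; it is overcome in (1)--(2) by the intrinsic truncation provided by the event and in (3) by the dyadic-subsequence device that makes the polynomial tail in Lemma \ref{le:tailpr} geometrically summable.
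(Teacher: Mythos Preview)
Your approach to parts (1) and (2) is essentially the same first-moment argument as the paper's, though framed slightly differently. The paper works directly with the Chebyshev-type inequality
\[
\mu_1(I_\sigma)\,\ind_{\{\mu_1(I_\sigma)\le f(n)\}}\le \mu_1(I_\sigma)^{1-\eta}f(n)^{\eta},
\]
takes expectations using $\E Y_1^{1-\eta}\le C/\eta$, and optimizes over $\eta$; this is exactly equivalent, in your tilted formulation, to bounding $g(t)=\E[Y_1\ind_{Y_1<t}]\le t^\eta\,\E Y_1^{1-\eta}$. One caution: your sketch invokes only the asymptotic $g(t)=O(\log t)$, but that alone does not make $\tilde{\E}[g(f(n)e^{-X_\sigma})]$ tend to zero (the contribution from moderate $t$ stays bounded away from~$0$); the interpolated bound $g(t)\le (C/\eta)t^\eta$ is what is actually needed, and once you use it you recover the paper's estimate $\E\mu_1(A_n)\le (C/\eta)f(n)^\eta 2^{n\eta^2}$ and the thresholds $\sqrt{2\log 2}$, $\sqrt{6\log 2}$ exactly as stated.

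Your approach to part (3) is genuinely different from---and simpler than---the paper's. The paper proceeds by induction on $k$: assuming $\mu_1(I_n(x))\le n^{-(\gamma-1/2)}$ on a set $E_N$ of full $\mu_1$-mass (the inductive hypothesis $\mathcal P_k$, with Theorem~\ref{th:modulus} as the base case), it bounds
\[
\sum_{I_\sigma\cap E_N\neq\emptyset}\mu_1(I_\sigma)\,\ind_{\{\mu_1(I_\sigma)\ge f(n)\}}\le n^{-(\gamma-1/2)}\,f(n)^{-\beta}\sum_\sigma \mu_1(I_\sigma)^\beta,
\]
and then invokes Lemma~\ref{le:tailpr} along a polynomial subsequence. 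Your direct pointwise inequality $\mu_1(I_\sigma)\ind_{\{\mu_1(I_\sigma)>n^{-k}\}}\le n^{k(\beta-1)}\mu_1(I_\sigma)^\beta$ bypasses the bootstrap entirely and is in fact a sharper bound (the exponent $k(\beta-1)=k\beta-k$ beats the paper's $k\beta-(\gamma-1/2)$ since $\gamma-1/2<k$). The remaining ingredients---Lemma~\ref{le:tailpr}, a geometric subsequence to make the polynomial tail summable, the random-measure Borel--Cantelli, and monotone interpolation between subsequence times---are the same in both arguments. What the paper's inductive route buys is nothing extra here; your one-step argument is both correct and more economical.
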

\begin{proof}
We start with the proofs of (1) and (2) that can be achieved by the application of general moment estimates. We remark that these statements have analogues that can be proven by the same method for general $\xi$. Let $f: \N \to \R^+$ be an ultimately nonincreasing function tending to $0$ at infinity. We consider the $\mu_1$-measures of the sets
$$
E_n^f = \{ x: \mu_1(I_n(x)) \leq f(n) \}.
$$
Let $(\eta_n)_{n\ge 1}$ be a sequence taking values in $(0,1)$ and write
\begin{align*}
\mu_1(E_n^f) & = \int_0^1 \ind_{\{\mu_1(I_n(x)) \leq f(n)\}} \d \mu_1(x)
= \sum_{\sigma \in \Sigma_n} \mu_1(I_\sigma) \ind_{\{\mu_1(I_\sigma) \leq f(n)\}} \\
& \leq \sum_{\sigma \in \Sigma_n} \mu_1(I_\sigma) \left (\frac{f(n)}{\mu_1(I_\sigma)}\right)^{\eta_n}
= \sum_{\sigma \in \Sigma_n} \mu_1(I_\sigma)^{1-\eta_n} f(n)^{\eta_n}.
\end{align*}
By the characterization \eqref{eq:semi-stable} of the law of $\mu_1$, we have
$$
\E \, \mu_1(E_n^f) \leq f(n)^{\eta_n} \sum_{\sigma \in \Sigma_n} \E e^{(1-\eta_n)X_\sigma} \, \E Y_1^{1-\eta_n} = f(n)^{\eta_n} 2^{n \eta_n^2} \E Y_1^{1-\eta_n}.
$$
Theorem \ref{th:tail} implies the existence of a constant $C > 0$ such that $\E Y_1^{1-\eta_n} \leq C/\eta_n$, which gives
\begin{equation}\label{eq:gauge-ld-estimate}
\E \, \mu_1(E_n^f) \leq C \exp\left( \eta_n^2 n \log 2 + \eta_n \log f(n) - \log \eta_n \right).
\end{equation}
By solving for the zero of the derivative, the expression in the exponential is minimized for $\eta_n > 0$ by the choice
$$
\eta_n^{\min} = \frac{-\log f(n)}{4n \log 2} + \frac{\sqrt{8n \log 2 + (\log f(n))^2}}{4n \log 2}.
$$
To get more manageable expressions, we choose $\eta_n = \frac{-\log f(n)}{2 n \log 2} < \eta_n^{\min}$ to get the estimate
$$
\E \, \mu_1(E_n^f) \leq C \exp\left( - \frac{(\log f(n))^2}{4 n \log 2} + \log n - \log (-\log f(n)) + \log (2 \log 2) \right).
$$

Under the assumption of part (1) of the theorem, for some $\varepsilon > 0$ there exists a sequence $(n_k)_{k \geq 1}$ of indices such that
$$
-\log f(n_k) \geq (\sqrt{2 \log 2}+\varepsilon) \sqrt{n_k \log n_k}
$$
for all $k \geq 1$. Thus
$$
\E \, \mu_1(E_{n_k}^f) \leq C' \exp\left( -\frac{(\sqrt{2 \log 2}+\varepsilon)^2 \log n_k}{4 \log 2} + \frac{1}{2} \log n_k - \frac{1}{2} \log \log n_k \right),
$$
which shows that $\E \, \mu_1(E_{n_k}^f) \to 0$ as $k \to \infty$. We may thus extract a subsequence of $(n_k)$, for convenience still denoted by $(n_k)$, for which
$$
\sum_{k \geq 1} \E \, \mu_1(E_{n_k}^f) < \infty,
$$
implying that $\sum_{k \geq 1} \mu_1(E_{n_k}^f) < \infty$ almost surely. An application of the Borel--Cantelli lemma to the measure $\mu_1$ allows us to conclude that almost surely the set
$$
\big\{ x \in [0,1]: \mu_1(I_{n_k}(x)) \leq f(n_k) \textrm{ for all but finitely many } k \big\}
$$
has $\mu_1$-measure $0$. This implies the claim.

To prove (2), let $f_\alpha$ be as in the statement. For $f = f_\alpha$ our earlier choice of $\eta_n$ is explicitly
$$
\eta_n = \frac{\sqrt{3} \sqrt{\log n + \alpha \log \log n}}{\sqrt{2 n \log 2}},
$$
which we plug into \eqref{eq:gauge-ld-estimate} to get
\begin{align*}
\E \, \mu_1(E_n^{f_\alpha}) & \leq C \exp\left( -\frac{3}{2} (\log n + \alpha \log \log n) - \log \frac{\sqrt{3} \sqrt{\log n + \alpha \log \log n}}{\sqrt{2 n \log 2}} \right) \\
& \leq C' \exp\left( -\log n - \frac{3 \alpha + 1}{2} \log \log n \right).
\end{align*}
We see that for $\alpha > 1/3$
$$
\sum_{n \geq 1} \E \, \mu_1(E_n^{f_\alpha}) < \infty,
$$
which implies that almost surely $\sum_{n \geq 1} \mu_1(E_n^{f_\alpha}) < \infty$. The claim now follows from the Borel--Cantelli lemma.

\vspace{0.3cm}

\noindent The proof of part (3) requires the use of subtler properties of the cascade. We will prove by induction that for all $k\in\N_+$ the following property $\mathcal{P}_k$ holds.
\begin{description}
\item[$\mathcal{P}_k$] For all $\gamma<k/2$, almost surely $\mu_1$-almost everywhere for $n$ large enough, one has $\mu_1(I_n(x))\le n^{-\gamma}$.
\end{description}
Notice that by Theorem \ref{th:modulus} this property holds for $k=1$.

Suppose $\mathcal{P}_k$ holds for some $k\in\N_+$. Fix $1/2<\gamma<(k+1)/2$ and let $\varepsilon \in (k+1-2\gamma, k)$. For each $N \geq 1$ let 
$$
E_N = \{x\in [0,1]: \,\forall\, n \geq N,\  \mu_1(I_n(x))\le n^{-(\gamma-1/2)}\}
$$ 
and note that from the assumption that $\mathcal{P}_k$ holds it follows that
$$
\mu_1\left(\cup_{N \geq 1} E_N\right) = \mu_1([0,1]).
$$
Setting $f(n)=n^{\varepsilon/2-(k+1)/2}$ we have, for all $n \geq N$ and $\beta > 1$,
\begin{align*}
\sum_{\sigma\in\Sigma_n: I_\sigma\cap E_N \neq \emptyset} \mu_1(I_\sigma) \mathbf{1}_{\{\mu_1(I_\sigma) \geq f(n)\}}
& \leq n^{-(\gamma-1/2)} \#\{\sigma\in\Sigma_{n}: \mu_1(I_\sigma) \geq f(n)\} \\
& \leq n^{-(\gamma-1/2)}  \sum_{\sigma\in\Sigma_{n}}\mu_1(I_\sigma)^\beta f(n)^{-\beta} \\
& = n^{-(\gamma-1/2)}  n^{-\beta(\varepsilon/2-(k+1)/2)} \sum_{\sigma\in\Sigma_{n}}\mu_1(I_\sigma)^\beta \\
&= n^{-\theta} \sum_{\sigma\in\Sigma_n}\big (n^{1/2}\mu_1(I_\sigma)\big )^\beta,
\end{align*} 
where $\theta=\gamma-\frac{1}{2}-\beta\frac{k-\varepsilon}{2}$. By our choice of $\varepsilon$ we may choose $\beta > 1$ so that $\beta < \frac{2\gamma - 1}{k - \varepsilon}$, which implies $\theta > 0$.
Now recall that by Theorem~\ref{th:exact}, $c(n)\sum_{\sigma\in\Sigma_{n}}\big (n^{1/2}\mu_1(I_\sigma)\big )^\beta$ converges in law to $Y_\beta$ for some bounded sequence $c(n)$, and moreover by Lemma \ref{le:tailpr}, for $q \in (0, 1/\beta)$ we have the uniform estimate $\Prob\left( \sum_{\sigma\in\Sigma_n} \big(n^\frac{1}{2} \mu_1(I_\sigma)\big)^\beta > n^{\theta/2} \right) \leq C(q) n^{-\theta q/2}$. Consequently, there exists an integer $\ell > 2/\theta$ such that for the sequence $(n_j)_{j=1}^\infty = (j^\ell)_{j=1}^\infty$ we have, almost surely for all $j$ large enough,
$$
\sum_{\sigma\in\Sigma_{n_j}}\big (n_j^{1/2}\mu_1(I_\sigma)\big )^\beta\le n_j^{\theta/2}
$$
and hence for $j$ large enough
$$
\mu_1\big (E_N\cap\{x: \mu_1(I_{j^\ell}(x))\ge f(j^\ell)\}\big ) \leq \sum_{\sigma\in\Sigma_{j^\ell}: I_\sigma\cap E_N\neq\emptyset} \mu_1(I_\sigma)\mathbf{1}_{\{\mu_1(I_\sigma) \geq f(j^\ell)\}} \leq j^{-\ell \theta/2}.
$$
It follows that almost surely, for all $N \geq 1$,
$$
\sum_{j^\ell \geq N} \mu_1\big( E_N \cap \{ x: \mu_1(I_{j^\ell}(x)) \geq f(j^\ell) \} \big) < \infty.
$$
By the Borel--Cantelli lemma, almost surely $\mu_1$-almost everywhere on $E_N$ we have $\mu_1(I_{j^\ell}(x)) \leq f(j^\ell)$ for $j$ large enough. But if $j^\ell < n \leq (j+1)^\ell$, we then also have
$$
\mu_1(I_n(x)) \leq \mu_1(I_{j^\ell}(x)) \leq f(j^\ell) = (j^\ell)^{\frac{\varepsilon}{2}-\frac{k+1}{2}} \leq 2^{\ell \frac{k+1-\varepsilon}{2}} n^{\frac{\varepsilon}{2}-\frac{k+1}{2}},
$$
and therefore we conclude that there exists a constant $C=C(\ell,k,\varepsilon)>0$ such that almost surely $\mu_1$-almost everywhere on $E_N$ we have
$$
\mu_1(I_n(x)) \leq C f(n) = C n^{-\frac{k+1-\varepsilon}{2}}
$$
for all $n$ large enough. By our assumption we have $\mu_1(\cup_{N \geq 1} E_N) = \mu_1([0,1])$, so we have shown that the desired conclusion holds for all $\gamma' < (k+1-\varepsilon)/2$. Since $\gamma$ can be taken arbitrarily close to $(k+1)/2$ and hence $\varepsilon$ arbitrarily close to 0, we are done.
\end{proof}

\end{document}